\newcommand{\dom}{\partial\Omega}
\newcommand{\diff}{\ensuremath{\:\text{d}}}
\DeclareMathOperator*{\supp}{supp}
\newcommand{\HS}{\ensuremath{\text{\it HS}^1}}
\newcommand{\HSt}[1]{\ensuremath{\text{\it HS}_{#1,\text{ato}}^1}}
\newcommand{\tilN}[1]{\ensuremath{N(\nabla #1)}}
\newcommand{\loc}{\ensuremath{\text{loc}}}
\newcommand{\divt}{\ensuremath{\text{div}}}
\newtheorem{theorem}{Theorem}[section]
\newtheorem{lemma}[theorem]{Lemma}
\newtheorem{corollary}[theorem]{Corollary}
\newtheorem{definition}{Definition}[section]
\theoremstyle{definition}
\begin{document}

\title{The regularity problem for elliptic operators with boundary data in Hardy-Sobolev space $\HS$}

\begin{abstract}
Let $\Omega$ be a Lipschitz domain in $\mathbb R^n,n\geq 3,$ and
$L=\divt A\nabla$ be a second order elliptic operator in
divergence form. We will establish that the solvability of the
Dirichlet regularity problem for boundary data in Hardy-Sobolev
space $\HS$ is equivalent to the solvability of the Dirichlet
regularity problem for boundary data in $H^{1,p}$ for some
$1<p<\infty$. This is a \lq\lq dual result" to a theorem in
\cite{DKP09}, where it has been shown that the solvability of the
Dirichlet problem with boundary data in $\text{BMO}$ is equivalent
to the solvability for boundary data in $L^p(\partial\Omega)$ for
some $1<p<\infty$.
\end{abstract}

\author{Martin Dindo\v{s} \and Josef Kirsch}

\maketitle \markright{THE REGULARITY PROBLEM FOR ELLIPTIC
OPERATORS IN $\HS$}

\section{Introduction}
We shall prove an equivalence between solvability of certain
end-point Dirichlet regularity problem in $\HS$ for second order
elliptic operators and the solvability of the Dirichlet regularity
problem with boundary data in $H^{1,p}$ for some $1<p<\infty$. The
space $\HS$ is defined in section 2.

To be more precise, we study the regularity problem for elliptic
operators in divergence form $L=\text{div} A\nabla$ on a Lipschitz
domain $\Omega\subset \mathbb R^n,n\geq 3$. The matrix
$A=(a_{ij}(X))$ has real, bounded measurable coefficients such
that there exists
$\lambda>0$ with $\lambda^{-1}|\xi|^2\leq \sum_{ij}a_{ij}(X)\xi_i\xi_j$ for all $\xi\in\mathbb R^n$ and all $X\in \Omega$.\\

For these elliptic operators the Lax-Milgram Theorem implies that
for every $f\in H^{\frac{1}{2},2}(\partial\Omega)$ there exists a
unique weak solution $u\in H^{1,2}(\Omega)$, i.e.
$$\int_{\Omega} A\nabla u\cdot \nabla\varphi=0\quad\text{for all $\varphi\in C^{\infty}_0(\Omega)$}$$
with $u\equiv f$ on $\partial\Omega$, which means that the
Dirichlet problem
\begin{align*}
Lu &= 0\text{ in }\Omega\\
u&\equiv f\text{ on }\partial \Omega
\end{align*}
is solvable for boundary data in
$H^{\frac{1}{2},2}(\partial\Omega)$. The question, if solvability
still holds for other classes of boundary values, was extensively
studied. In \cite{LSW63} it was shown that the continuous
Dirichlet problem is solvable for these elliptic operators, i.e.
for every $f\in C^0(\partial\Omega)$ there exists a unique $u\in
W^{1,2}_{\loc}(\Omega)
\cap C^0(\bar{\Omega})$ such that $Lu=0$ in $\Omega$ and $u\equiv f$ on $\partial \Omega$.\\

Historically the study of the Dirichlet problem with boundary data
in $L^p$ for elliptic operators of the form $L=\divt A\nabla$ was
initiated by B.E.J. Dahlberg in \cite{Dah77}, where the Laplacian
on Lipschitz domains was considered (the pullback of the Laplacian
on a Lipschitz domain leads to an operator of the form $L=\divt A
\nabla$ for $A$ elliptic with bounded, measurable coefficients).

Apart from the Dirichlet boundary value problem with data in $L^p$
of great interests are also other boundary value problems in
particular the $L^p$ Neumann problem and Dirichlet regularity
problem (or just Regularity problem) where the data are in
$$H^{1,p}(\dom)=\{f\in L^p(\dom);\,\nabla_T f\in L^p(\dom)\}.$$

Our result is motivated by a recent result \cite{DKP09} that
established that the Dirichlet problem with boundary data in
$L^p(\partial \Omega)$ is solvable (abbreviated $(D)_p$) for some
$1< p <\infty$ if and only the Dirichlet problem with boundary
data is solvable in the end-point BMO space (abbreviated
$(D)_{BMO}$).

By the theory of Muckenhaupt's $B_p$-weights it is well known that
$(D)_p$ implies $(D)_q$ for $q\in (p-\varepsilon,\infty)$ and some
$\varepsilon>0$, i.e. solvability is open with respect to $p$ on
$(1,\infty)$. The result in \cite{DKP09} establishes that this
\lq\lq extrapolation property" also holds at the endpoint where
the correct endpoint is $(D)_{BMO}$. Furthermore the $(D)_{BMO}$
solvability is also equivalent to the fact that the harmonic
measure for the operator $L$ is an $A_{\infty}(d\sigma)$ weight
with respect to the surface measure.\vglue1mm

The most classical method for solving these types of boundary
value problems (at least for {\it symmetric} operators with
coefficients of sufficient smoothness) is the method of layer
potentials \cite{FJR} for the Laplacian in ${\mathbb R}^n$ and
\cite{MT1}-\cite{MT3} for variable coefficients operators. What
has been observed are intriguing relationships between various
boundary value problems. Of particular note is the duality between
the $L^p$ Dirichlet boundary value problem and $H^{1,p'}$
Regularity problem ($\frac1p+\frac1{p'}=1$). It turns out that the
the $L^p$ Dirichlet boundary value problem is solvable if and only
if the $H^{1,p'}$ Regularity problem is solvable for the same
operator (assuming {\it symmetry} of the operator). \vglue1mm

We note that our assumptions do not allow to use the method of
layer potentials, but this informal duality  led us to hypothesize
and later prove that the result from \cite{DKP09} does have a
corresponding dual result. We observed that the dual of the Hardy
space is the BMO space and this leads to hypothesis that the
correct endpoint space for the Regularity problem is the atomic
Hardy space. \vglue1mm

Before we formulate our main result precisely we introduce few
necessary definitions. The study of boundary data in $L^p(\partial
\Omega)$ is related to the study of the non-tangential maximal
function, see for example \cite{Dah79}.

\begin{definition}
For $\kappa>1$ we define the cone-like family of non-tangential
approach regions $\{\Gamma_{\kappa}(Q)\}_{Q\in\partial \Omega}$ by
$$\Gamma_{\kappa}(Q)=\{X\in \Omega:|X-Q|<\kappa\:\text{dist}(X,\partial \Omega)\}.$$
We will omit the index $\kappa$ and write $\Gamma(Q)$, if no
confusion can arise. The non-tangential maximal function for the
non-tangential approach region $\{\Gamma(Q)\}_{Q\in\partial
\Omega}$ is defined by
$$u^*(Q)=\sup_{X\in \Gamma} |u(X)|.$$
The truncation at height $h$ of the non-tangential maximal function is defined by $(u)_h^*(Q)=\sup_{X\in\Gamma(Q)\cap B_h(Q)} |u(X)|$.\\
Moreover we define the following variant of the non-tangential
maximal function:
\begin{equation}\label{NTMaxVar}
N(h)(Q)
=\sup_{X\in\Gamma(Q)}\left(\fint_{B_{\frac{\delta(X)}{2}}(X)}|h(Y)|^2\diff
Y\right)^{\frac{1}{2}}\qquad h\in L^2_{loc}(\Omega).
\end{equation}
\end{definition}
\begin{definition}\label{DefDpCondition}
The Dirichlet problem with boundary data in $L^p(\partial
\Omega),1< p <\infty$, is solvable (abbreviated $(D)_p$), if there
exists a constant $C>0$ such that for every $f\in C^0(\partial
\Omega)$ the corresponding unique weak solution $u\in
W^{1,2}_{loc}(\Omega)\cap C^0(\bar{\Omega})$ satisfies
$$||u^*||_{L^p(\partial \Omega)}\leq C ||f||_{L^p(\partial \Omega)}.$$
\end{definition}

\begin{definition}
The regularity problem with boundary data in
$H^{1,p}(\partial\Omega),1<p<\infty,$ is solvable (abbreviated $(R)_{p}$), if
for every $f\in H^{1,p}(\partial\Omega)\cap C^0(\partial \Omega)$
the weak solution $u$ to the problem
\begin{align*}
\begin{cases}
Lu &=0 \quad\text{ in } \Omega\\
u|_{\partial B} &= f \quad\text{ on } \partial \Omega
\end{cases}
\end{align*}
satisfies
\begin{align}
\nonumber \quad||\tilN{u}||_{L^p(\partial \Omega)} +
||u||_{L^p(\Omega)}\leq C||f||_{H^{1,p}(\partial\Omega)}
\end{align}
for a constant $C$ independent of $f$. Similarly, we say that the
regularity problem with boundary data in $\HS(\partial\Omega)$
(abbreviated $(R)_{\HS}$) if for every $f\in
\HS(\partial\Omega)\cap C^0(\partial \Omega)$ the solution $u$
satisfies the estimate
\begin{align}
||\tilN{u}||_{L^1(\partial \Omega)} + ||u||_{L^1(\Omega)}\leq
C||f||_{\HS(\partial\Omega)}.\label{RHSATO}
\end{align}
\end{definition}
We define $(D^*)_{p'}=(D)_{p'}^{L^*}$ for $L^*=\divt A^T \nabla$.
Now we can formulate the main result of this paper:

\begin{theorem} Let $L$ be a divergence form elliptic operator
satisfying the ellipticity condition on a Lipschitz domain
$\Omega$. Then the following two statements hold:
\begin{itemize}

\item If $(R)_{\HS}$ is solvable then $(D^*)_{\text{BMO}}$ and
$(R)_p$ are also solvable for some $1<p<\infty$. Moreover, under
this assumption
$$(R)_p \text{ is solvable if and only if } (D^*)_{p'} \text{ is solvable for }p'=p/(p-1).$$

\item If $(R)_p$ is solvable for some $1<p<\infty$ so is
$(R)_{\HS}$.
\end{itemize}
\end{theorem}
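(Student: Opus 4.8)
The plan is to prove the two implications by exploiting the atomic structure of $\HS$ together with the known "interior" equivalences between $(R)_p$, $(D^*)_{p'}$, and $(D^*)_{\mathrm{BMO}}$. I will treat the two bullets in opposite order from the statement, since the second is the cleaner "extrapolation down to the endpoint" direction and sets up machinery for the first.

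\emph{Second bullet: $(R)_p \Rightarrow (R)_{\HS}$.} Here the natural strategy is the standard Hardy-space argument: reduce to atoms, and prove a uniform bound on solutions with atomic data. So I would take $f \in \HS(\partial\Omega)\cap C^0(\partial\Omega)$, write $f = \sum_j \lambda_j a_j$ with $\sum_j|\lambda_j|\lesssim \|f\|_{\HS}$ where each $a_j$ is an $\HSt{1}$-atom supported in a surface ball $\Delta_j = \Delta(Q_j,r_j)$ and normalized so that $\|\nabla_T a_j\|_{L^\infty}\lesssim \sigma(\Delta_j)^{-1}$ (the precise normalization is fixed in Section 2). Let $u_j$ be the solution with data $a_j$ and $u$ the solution with data $f$; by uniqueness and linearity $u = \sum_j \lambda_j u_j$, so by subadditivity of the nontangential maximal function it suffices to show
\begin{align}
\|N(\nabla u_j)\|_{L^1(\partial\Omega)} + \|u_j\|_{L^1(\Omega)} \le C \label{atomest}
\end{align}
with $C$ independent of the atom. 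For the local piece, over the Carleson box $T(\Delta_j^*)$ above a fixed dilate of $\Delta_j$, I would use $(R)_p$ for the solution with data $a_j$ together with Hölder's inequality on $\Delta_j^*$: since $\|a_j\|_{H^{1,p}(\partial\Omega)}\lesssim \sigma(\Delta_j)^{1/p - 1}$ by the normalization, $(R)_p$ gives $\|N(\nabla u_j)\|_{L^p(\Delta_j^*)}\lesssim \sigma(\Delta_j)^{1/p-1}$, and Hölder over $\Delta_j^*$ recovers an $O(1)$ bound on the $L^1$ norm there. Away from $\Delta_j$, I would use the cancellation $\int a_j\, d\sigma = 0$ (or rather the fact that an $\HS$-atom is, up to constants, the tangential "primitive" of an $H^1$-atom) together with interior/boundary estimates for $L$-solutions — De Giorgi–Nash–Moser interior estimates and boundary Hölder continuity from \cite{LSW63} — to get the standard dyadic decay $|N(\nabla u_j)(Q)| \lesssim r_j^{?}\, \mathrm{dist}(Q,Q_j)^{?-n}$ on annuli $2^k\Delta_j\setminus 2^{k-1}\Delta_j$, which sums to an $O(1)$ contribution to the $L^1$ norm; the $\|u_j\|_{L^1(\Omega)}$ term is handled similarly (and is the easier of the two, since $u_j$ itself is globally controlled by the maximum principle and decay estimates).

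\emph{First bullet: $(R)_{\HS} \Rightarrow (D^*)_{\mathrm{BMO}}$ and $(R)_p$ for some $p$, plus the duality equivalence.} The idea here is to run the implication "by duality and then extrapolate." First, I expect $(R)_{\HS}$ to be equivalent, by a testing/duality argument pairing the solution operator against $H^1$ atoms, to an estimate for the $L^*$-Dirichlet problem with $\mathrm{BMO}$ (or at least $L\log L$-type) boundary data; this is the "dual statement" of \cite{DKP09} advertised in the introduction, and the pairing $\langle \partial_\nu u, g\rangle$ against $g\in\mathrm{BMO}$ should produce exactly $(D^*)_{\mathrm{BMO}}$ via a Green's-identity / Rellich-type integration by parts. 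Once $(D^*)_{\mathrm{BMO}}$ is in hand, the theorem of \cite{DKP09} gives $(D^*)_{p'}$ for some $p'\in(1,\infty)$, equivalently the $A_\infty$ property of the $L^*$-harmonic measure; and then the "$(D^*)_{p'}\Rightarrow (R)_p$" direction — which is the harder, classical implication requiring a good-$\lambda$ / square-function argument relating $N(\nabla u)$ to the $L^*$-adjoint Dirichlet solvability (in the spirit of the work on the regularity problem without layer potentials) — yields $(R)_p$. The final "if and only if" is then the full equivalence $(R)_p\Leftrightarrow (D^*)_{p'}$ under the standing assumption, one direction of which we have just used and the other ($(R)_p\Rightarrow (D^*)_{p'}$) being the more elementary duality direction.

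\emph{Main obstacle.} I expect the crux to be the passage between the endpoint regularity estimate \eqref{RHSATO} and the $\mathrm{BMO}$-Dirichlet estimate for $L^*$ — i.e., making the formal duality "dual of $H^1$ is $\mathrm{BMO}$, dual of the Regularity problem is the Dirichlet problem" into a rigorous equivalence in the absence of layer potentials. Concretely: one must justify the integration-by-parts pairing between $\nabla u$ (with $u$ an $L$-solution with $\HS$ data) and a solution $w$ of $L^* w = 0$ with $\mathrm{BMO}$ data, controlling all boundary terms via nontangential maximal functions and square functions, and one must produce the Carleson-measure estimates that let \eqref{RHSATO} (an $L^1$ statement about $N(\nabla u)$) be tested against $\mathrm{BMO}$ functions. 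The atom-by-atom estimate \eqref{atomest} in the second bullet is technically involved but follows a well-trodden path; it is this duality bridge, and the self-improvement of the resulting $(D^*)$ estimate to some $(D^*)_{p'}$ via \cite{DKP09}, that I anticipate will require the most care.
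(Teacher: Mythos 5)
Your outline diverges from the paper at two essential points, and one of them is a genuine gap rather than just a different route.

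For the second bullet, $(R)_p \Rightarrow (R)_{\HS}$, you propose to control $N(\nabla u_j)$ away from the atom's support by invoking ``the cancellation $\int a_j\,d\sigma = 0$'' and De~Giorgi--Nash--Moser decay. This is not available: the paper's Hardy--Sobolev $(1,t)$-atoms are deliberately \emph{not} required to have vanishing mean (Section~2 explicitly drops the cancellation condition $\int a = 0$ so that constants belong to $\HS$). The cancellation you do have, $\int_{\dom}\nabla_T a = 0$, is not directly what a moment-decay argument for $u_j$ uses. What the paper actually does (Theorem~\ref{ZeroPart}) is quite different and is the real content of that bullet: one first uses $(R)_p \Rightarrow (D^*)_{p}$ to know $\omega\in A_\infty(d\sigma)$, then bounds $u(X)$ for $X$ away from the support via the size bound $|f|\le C R^{2-n}$ and the Green's function estimate $u(X)\lesssim R^{2-n}\omega^X(\Delta_R)\approx G(X,A_R)\lesssim R^\alpha|X-Q_0|^{2-n-\alpha}$, and then converts $u$-decay to $N(\nabla u)$-decay on dyadic annuli via Cacciopoli, Lemma~\ref{ReversePartI} (Shen's reverse-type estimate using $A_\infty$), and the comparison principle. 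So the ``zero part'' is not an interior-regularity-plus-cancellation argument; it is an $A_\infty$/elliptic-measure argument.

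For the first bullet you propose establishing $(R)_{\HS}\Rightarrow(D^*)_{\mathrm{BMO}}$ by a Green's identity / Rellich pairing $\langle\partial_\nu u, g\rangle$ against $g\in\mathrm{BMO}$. That is precisely the layer-potential-type duality the paper flags as unavailable under its hypotheses (nonsymmetric, merely bounded measurable coefficients). The paper's Theorem~\ref{RpToDp} instead chooses a concrete bump function $f$ which, after rescaling, is a Hardy--Sobolev atom, uses the comparison principle and [CFMS81] to compare $u$ with the Green's function, and derives that $\sup_{0<s<R/2}\omega(\Delta_s(P))/s^{n-1}\lesssim R^{2-n}\omega(\Delta_R)\,N^\varepsilon(\nabla u)(P)$. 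Feeding this into the $(R)_{\HS}$ bound and Stein's $L\log L$ characterization of $A_\infty$ yields $\omega\in A_\infty(d\sigma)$ without any integration-by-parts pairing. You should also note that the paper's good-$\lambda$ proof of $(D^*)_{p'}\Rightarrow(R)_p$ is \emph{not} run from $(D^*)_{p'}$ alone: it re-uses the $(R)_{\HS}$ hypothesis in an essential way (via Lemma~\ref{MainHSEstimate}) to bound the term $II$ coming from the localized boundary data, so the implication is proved under the standing assumption $(R)_{\HS}$, as in Corollary~\ref{EndpointIncluded}. Your sketch treats $(D^*)_{p'}\Rightarrow(R)_p$ as a freestanding classical implication, which is not how the paper proceeds and is in fact an open question in this generality.
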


We note that the second part of this statement is not new and
appears in \cite{KP93} (at least for symmetric operators). The
reverse direction is new. Simultaneously, the first part of this
statement improves the result of Shen \cite{She07} (again only
stated for symmetric operators). Shen has established that the
statement
$$(R)_p \text{ is solvable if and only if } (D^*)_{p'} \text{ is solvable for }p'=p/(p-1),$$
holds provided $(R)_q$  is solvable for at least one $q\in
(1,\infty)$. In our statement this is replaced by the $(R)_{\HS}$
is solvability.\vglue1mm

\paragraph*{\bf Acknowledgments.} The second author thanks his Ph.D supervisor Martin Dindo\v{s} for the encouragement and for many helpful discussions and suggestions.
The second author is also grateful to Jill Pipher for her help
concerning the endpoint of Gehring's lemma. The results in this
paper also appear in the Ph.D thesis \cite{Kir11}, where amongst
other things the same problem for elliptic operators with small
drift terms is considered.

\section{Lipschitz Domains and the Hardy-Sobolev Space $\HS$}

In this section we will follow \cite{BD09} to introduce the
Hardy-Sobolev space $\HS$ on the boundary of a Lipschitz domain.
\begin{definition}\label{DefLipDomain}
A domain $\Omega\subset\mathbb R^n$ is called a Lipschitz domain,
if there exist a finite sequence $\{Q_k\}_k\in \partial\Omega$ and
$R_0>0$ such that
\begin{itemize}
\item $\Omega\cap B_{8R_0}(Q_k)$ is in some local coordinates
$\{(x,\phi_k(x)+t):x\in\mathbb R^{n-1},t>0\}\cap B_{8R_0}(0)$ for
a Lipschitz function $\phi_k$ \item $\partial\Omega = \bigcup_k
B_{R_0}(Q_k)\cap\partial\Omega$
\end{itemize}
\end{definition}
Throughout the whole paper, we will assume that $\Omega$ is a
bounded Lipschitz domain in $\mathbb R^n$ for $n\geq 3$. By
definition $\Omega$ is locally the area above a Lipschitz graph
$\varphi$ and so for $Q=(x',\varphi(x'))\in\partial\Omega$ we
define $A_R(Q)=(x',\varphi(x')+R)$ and for $X\in \Omega$ we define
$\hat{X}\in\partial\Omega$ such that $A_R(\hat{X})=X$ for an
appropriate $R$. Thus $A_R(Q)$ and $\hat{X}$ are well defined in each $\partial\Omega\cap B_{8R_0}(Q_k)$. This means that $A_R(Q)$ and $\hat{X}$ depend on $k$, but we will omit the index $k$ to maintain an easy readable notation. If we speak about an $A_R(Q)$ for $R>R_0$ we mean an appropriate point (which will be clear by the context) in $\Omega$, which has distance to $\partial\Omega$ comparable to $1$. The radius of a ball $B$ is denoted by $r(B)$ and
for $Q\in\partial\Omega,X\in \Omega$ and $R>0$ we write:
\begin{align*}
\Delta_R(Q) &= \partial \Omega\cap B_R(Q),\,\,\,\qquad T_R(Q) = \Omega\cap B_R(Q),\\
\delta(X) &=\text{dist}(X,\partial \Omega),\quad\qquad(\partial \Omega)_{\beta} =\{X\in\Omega: \delta(X)<\beta\},\\
\Omega_{\beta} &= \Omega\backslash (\partial \Omega)_{\beta}.
\end{align*}
\vskip0.2cm In \cite{Miy90} and \cite{Cal72} it was shown that a
function having weak derivatives in the Hardy space $H^p$ is
equivalent to a maximal function used by A. P. Calder\'{o}n and
then by A. Miyachi being bounded on $L^p$. In \cite{DS84}, Theorem
5.3, R. Devore and C. Sharpley showed that the maximal function
defined by A. P. Calder\'{o}n is equivalent to a maximal function,
which we will define now for the case regarding one derivative
(see \cite{DS84} (2.2), (4.3), Lemma 2.1, page 36 and page 104 and
\cite{BD09}):
\begin{definition}[]\label{DefHSMaxFct}
Let $\Gamma$ be a domain in $\mathbb R^n$. For $0<q\leq 1$ and
$f\in L^q_{loc}(\Gamma)$ we define the maximal function $f^b_q$ by
$$f^b_q(x)=\sup_{B\ni x}\inf_{c\in{\mathbb R}^n} \frac{1}{r(B)}\left(\fint_B|f-c|^q\right)^{\frac{1}{q}},$$
where the supremum is taken over all balls $B$, which are
contained in $\Gamma$ and contain $x$. The space $\mathcal C^q$ is
defined as all $f\in L^q_{\loc}(\Gamma)$ such that the norm
$$||f||_{\mathcal C^q}=||f_q^b||_{L^q(\Gamma)} + ||f||_{L^q(\Gamma)}$$
is finite.
\end{definition}
For $q=1$ we see that $f_1^b(x)=\sup_{B\ni
x}\frac{1}{|B|^{\frac{1}{n}}} \fint_B |f-f_B|$ with $f_B=\fint_B f
= \frac{1}{|B|}\int_B f$, whereas for $q<1$ the function $f$ might
not be locally integrable and so $f_B$ might not be defined. To
simplify the notation we will write $Nf=f_1^b$, keeping the same
notation in \cite{BD09}.

In \cite{Haj03} (see (6) in \cite{BD09} as well) it was proved for
$f\in \mathcal C^1$, $\frac{s}{s+1}\leq q<1$, where $s$ is a
constant larger than $2$, which depends on the doubling property
of the underlying metric space, and $q^*=\frac{sq}{s-q}$ that
\begin{align}\label{6inBD09}
\left(\fint_{B_r}|f-f_{B_r}|^{q^*}\right)^{\frac{1}{q^*}}\leq C
r\left(\fint_{\lambda B_r} |Nf|^q\right)^{\frac{1}{q}}
\end{align}
for some $\lambda>1$, which is independent of $f$ and $r$. We
define
$$\mathcal M_qf(x)=\sup_{B\ni x}\left(\fint_B |f|^q\right)^{\frac{1}{q}},$$
where the supremum is taken over all balls containing $x$.

In \cite{BD09} N. Badr and G. Dafni proved a relationship between
the Hardy-Sobolev space and the space $\mathcal C^1$ on complete
Euclidian manifolds $M$ with $\mu(M)=\infty$ and $\mu$ a doubling
measure. Since we would like to apply this result later on to
boundary data on $\partial \Omega$ for $\Omega$ a Lipschitz
domain, we will not work in such a general setting. Our domain
will be $\partial\Omega$ for $\Omega$ a Lipschitz domain, where
the surface measure is the underlying measure. Therefore our
domain is bounded and has a finite doubling measure. We will not
write $\partial \Omega$, if there is no confusion possible, which
domain is meant. Similar to Definition 2.11 and Definition 4.3 in
\cite{BD09} and \cite{BB10} we define
\begin{definition}\label{DefinitionAtom}
For $1<t\leq \infty$ we say that a function $a$ is a Hardy-Sobolev
$(1,t)$-atom, if
\begin{itemize}
\item $a$ is supported in a ball $B$ \item $||a||_{L^t}+||\nabla
a||_{L^t}\leq \frac{1}{|B|^{\frac{1}{t'}}}$
\end{itemize}
For this $a$ we will use the terminology that $a$ is a Hardy-Sobolev $(1,t)$-atom corresponding to the ball $B$.\\
We define the space \HSt{t} as follows: $f\in \HSt{t}$, if there
exists a family of Hardy-Sobolev $(1,t)$-atoms $\{a_j\}_j$ such
that $f$ can be decomposed as
$$f=\sum_j \lambda_ja_j$$
with $\sum_j|\lambda_j|<\infty$. We equip $\HSt{t}$ with the norm
$||f||_{\HSt{t}}=\inf \sum_j|\lambda_j|$, where the infimum is
taken over all possible decompositions.
\end{definition}
Thus $\HSt{t}\subset W^{1,1}$. If one compares this definition
with the Definition 4.1 in \cite{BD09} for non-homogeneous
Hardy-Sobolev $(1,t)$-atoms, one sees that we do not impose the
cancellation condition $\int a=0$ on the atoms. This is due to the
fact that we do want constant functions to belong to our space. On
the other hand our atoms will always satisfy cancellation
condition on the level of derivatives:
$$\int_{\dom}\nabla_T a=0.$$

Moreover if one compares the Definition \ref{DefinitionAtom} with
the Definition 2.11 in \cite{BD09} for homogeneous Hardy-Sobolev
$(1,t)$-atoms one sees that N. Badr and G. Dafni impose
\begin{align}\label{L1Cond}
||a||_{L^1}\leq r(B),
\end{align}
which automatically holds for our atoms, because: For $a$ an atom corresponding to a ball $B$ with $|B|\leq \frac{1}{2}|\partial\Omega|$ we can use Poincar\'e's inequality and the fact that $\nabla a$ is uniformly in $L^1$. In the case that $|B|>\frac{1}{2}|\partial\Omega|$, condition (\ref{L1Cond}) simplifies to $||a||_{L^1}\leq C$, which obviously holds for any atom.\\
\begin{lemma} Let $a$ be a Hardy-Sobolev $(1,t)$-atom, then
$$||a||_{\mathcal C^1}\leq C_t.$$
Thus $\HSt{t}\subset\mathcal C^1$ with $||f||_{\mathcal C^1}\leq
C_t||f||_{\HSt{t}}.$
\end{lemma}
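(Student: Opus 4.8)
The plan is to show the two estimates $\|a\|_{L^1}\leq C_t$ and $\|a_1^b\|_{L^1}\leq C_t$ separately, where $a$ is a Hardy-Sobolev $(1,t)$-atom corresponding to a ball $B$, and then to conclude the statement for general $f\in\HSt{t}$ by summing the atomic decomposition. The bound on $\|a\|_{L^1}$ is immediate: since $a$ is supported in $B$ and $\|a\|_{L^t}\leq |B|^{-1/t'}$, H\"older's inequality on $B$ gives $\|a\|_{L^1}\leq |B|^{1/t'}\|a\|_{L^t}\leq 1$. (This is exactly the observation recorded just before the lemma, using Poincar\'e when $|B|\le\frac12|\dom|$ and triviality otherwise; I will quote it.) So the content is the maximal function bound.

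For $\|a_1^b\|_{L^1}$ I would split the boundary into the region near $B$ and the region away from $B$. First, recall $a_1^b(x)=\sup_{B'\ni x}\frac{1}{r(B')}\fint_{B'}|a-a_{B'}|$, the supremum over balls $B'\subset\dom$ containing $x$. On a fixed such $B'$, Poincar\'e's inequality on the Lipschitz boundary gives $\fint_{B'}|a-a_{B'}|\leq C\,r(B')\fint_{B'}|\nabla_T a|$, so that $a_1^b(x)\leq C\,\mathcal M_1(|\nabla_T a|)(x)$ pointwise, where $\mathcal M_1$ is the Hardy--Littlewood maximal operator (this is the $q=1$ case of the maximal control by $Nf$ discussed around \eqref{6inBD09}, or just Poincar\'e directly). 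Now split: for $x\in C\Delta$ where $\Delta$ is the boundary ball of radius $\sim r(B)$ concentric with $B$ and $C$ a fixed dilation constant, I integrate and use the weak-$(1,1)$/strong-$(t,t)$ bound for $\mathcal M_1$ together with $\|\nabla_T a\|_{L^t}\leq |B|^{-1/t'}$ and $|\Delta|\sim|B|$ to get $\int_{C\Delta}a_1^b\leq C|\Delta|^{1/t'}\|\mathcal M_1(\nabla_T a)\|_{L^t}\leq C|\Delta|^{1/t'}\|\nabla_T a\|_{L^t}\leq C_t$. For $x\notin C\Delta$, any ball $B'\ni x$ contributing to $a_1^b(x)$ either misses $\supp a$ entirely (contributing $0$) or has $r(B')\gtrsim\dist(x,B)$; using $\int_{B'}a=0$ is not available, but $|a_{B'}|\le\frac{1}{|B'|}\|a\|_{L^1}\le\frac{1}{|B'|}$ and $\fint_{B'}|a|\le\frac{1}{|B'|}$, so $\fint_{B'}|a-a_{B'}|\le\frac{C}{|B'|}$ and hence $a_1^b(x)\le\frac{C}{r(B')|B'|}\le C\,r(B)/( \dist(x,B)\cdot|B'|)$; one checks the worst ball has $|B'|\sim\dist(x,B)^{n-1}$, giving $a_1^b(x)\le C r(B)\dist(x,B)^{-n}$, and since $\dom$ is bounded this is integrable over the far region with $\int_{\dom\setminus C\Delta}a_1^b\le C$ uniformly. (If $B$ is so large that $C\Delta$ already covers $\dom$, only the first part is needed.) Combining the two regions gives $\|a\|_{\mathcal C^1}=\|a_1^b\|_{L^1}+\|a\|_{L^1}\leq C_t$.

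Finally, for $f=\sum_j\lambda_j a_j\in\HSt{t}$ with $\sum_j|\lambda_j|<\infty$, subadditivity of the maximal operator $a\mapsto a_1^b$ and of the $L^1$ norm give $\|f\|_{\mathcal C^1}\leq\sum_j|\lambda_j|\,\|a_j\|_{\mathcal C^1}\leq C_t\sum_j|\lambda_j|$, and taking the infimum over decompositions yields $\|f\|_{\mathcal C^1}\leq C_t\|f\|_{\HSt{t}}$, so $\HSt{t}\subset\mathcal C^1$ with the stated norm inequality.

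The main obstacle is the far-field estimate for $a_1^b$: without a cancellation condition $\int a=0$ on the atoms one cannot get decay purely from oscillation, so one must argue via the size bounds $\fint_{B'}|a|\le|B'|^{-1}$ and $|a_{B'}|\le|B'|^{-1}$ and carefully identify, for each far point $x$, the ball $B'$ that maximizes $\frac{1}{r(B')}\fint_{B'}|a-a_{B'}|$ — this forces $r(B')\sim\dist(x,B)$ and produces the integrable tail $r(B)\dist(x,B)^{-n}$. One also has to be slightly careful that all balls in the definition of $a_1^b$ are balls in $\dom$ (so intrinsic balls on the Lipschitz boundary), but since $\dom$ is Ahlfors regular this only changes constants.
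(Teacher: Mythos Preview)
Your approach is more explicit than the paper's, which simply refers to Proposition~4.5 of \cite{BD09}; the near/far splitting you describe is indeed the natural direct argument, and your near-field estimate via Poincar\'e and the strong $(t,t)$ bound for $\mathcal M_1$ is correct. However, there is a genuine gap in your far-field estimate.

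In the far region you write
\[
\fint_{B'}|a-a_{B'}|\le\frac{C}{|B'|}\qquad\text{and hence}\qquad a_1^b(x)\le\frac{C}{r(B')\,|B'|}\le \frac{C\,r(B)}{\dist(x,B)\,|B'|},
\]
but the last inequality is unjustified: from $r(B')\gtrsim\dist(x,B)$ you only obtain $\tfrac{1}{r(B')}\lesssim\tfrac{1}{\dist(x,B)}$, so the factor $r(B)$ in the numerator appears from nowhere. Using only $\|a\|_{L^1}\le 1$ (which is the bound you actually established and which you explicitly invoke again in your final paragraph), the best you get is $a_1^b(x)\le C\,\dist(x,B)^{-n}$, and integrating this over $\{x\in\dom:\dist(x,B)>Cr(B)\}$ produces a quantity of order $r(B)^{-1}$, which blows up as $r(B)\to 0$ and so does \emph{not} give a uniform $C_t$.

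The repair is exactly the observation recorded just before the lemma (condition~\eqref{L1Cond}): one has $\|a\|_{L^1}\le C\,r(B)$, not merely $\le C$. With this stronger size bound,
\[
\fint_{B'}|a-a_{B'}|\le \frac{2\|a\|_{L^1}}{|B'|}\le\frac{C\,r(B)}{|B'|},
\]
and your chain becomes $a_1^b(x)\le \tfrac{C\,r(B)}{r(B')\,|B'|}\le C\,r(B)\,\dist(x,B)^{-n}$, which now integrates over the far region to a constant independent of $r(B)$. You in fact allude to this Poincar\'e-based bound in your opening paragraph (``using Poincar\'e when $|B|\le\frac12|\dom|$''), but then do not carry it through to the one place where it is essential. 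Once this is inserted, the remainder of your argument is correct.
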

\begin{proof} The proof follows easily from the proof of Proposition 4.5 in \cite{BD09}. \end{proof}
To show the converse, i.e. that $\mathcal C^1\subset \HSt{t}$, we
have to construct the Hardy-Sobolev $(1,t)$-atoms, for which we
will need the following variant of the Calder\'{o}n Zygmund
decomposition:
\begin{theorem}\label{VCZDHS} Let $f\in \mathcal C^1$ and $q$ and $s$ be as in (\ref{6inBD09}). Then for every $\alpha\geq \alpha_0= C_{\Omega} ||f||_{\mathcal C^1}$ with $C_{\Omega}$ a constant depending on the domain $\Omega$, one can find balls $\{B_i\}_i\subset \partial \Omega$, functions $b_i\in W^{1,1}$ and $g\in W^{1,\infty}$ such that
\begin{itemize}
\item $f=g+\sum_i b_i$ \item $|g|+|\nabla g|\leq C \alpha$ almost
everywhere \item $\supp b_i\subset B_i$, $||b_i||_1\leq C r_i
\alpha |B_i|$, $||b_i||_{q}+||\nabla b_i||_q \leq C\alpha
|B_i|^{\frac{1}{q}}$ \item $\sum_i |B_i|\leq \frac{C}{\alpha}\int
(Mf+ Nf)$ \item $\sum_i \chi_{B_i}\leq C$.
\end{itemize}
\end{theorem}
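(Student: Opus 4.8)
The plan is to run a Calder\'on--Zygmund stopping--time argument on $\partial\Omega$, which is a compact space of homogeneous type for the surface measure, modified so that the role usually played by the maximal function $M(\nabla f)$ is taken over by $Nf=f_1^b$ together with the higher--integrability estimate \eqref{6inBD09}. I will use the ordinary Hardy--Littlewood maximal operator $M$ on $\partial\Omega$, and the fact --- part of the Calder\'on--Devore--Sharpley--Miyachi characterization of Hardy--Sobolev spaces by maximal functions, cf.\ \cite{DS84,Miy90,Haj03,BD09} --- that every $f\in\mathcal C^1$ has a weak gradient satisfying $|\nabla f|\le C\,Nf$ a.e.; note also that $f\in L^1(\partial\Omega)$, so the averages $f_B=\fint_B f$ below are well defined.

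First I would set
$$O_\alpha=\bigl\{x\in\partial\Omega:\ Mf(x)+M(Nf)(x)>\alpha\bigr\}.$$
By the weak type $(1,1)$ inequality $|O_\alpha|\le\tfrac{C}{\alpha}\bigl(\|f\|_{L^1}+\|Nf\|_{L^1}\bigr)\le\tfrac{C}{\alpha}\int(Mf+Nf)$, and since $|\partial\Omega|<\infty$ this forces $O_\alpha\subsetneq\partial\Omega$ once $\alpha\ge\alpha_0:=C_\Omega\|f\|_{\mathcal C^1}$ for a $C_\Omega$ depending only on $|\partial\Omega|$ and the doubling constant of $\partial\Omega$. Applying the Whitney covering lemma to the proper open set $O_\alpha$ produces balls $B_i=B(x_i,r_i)$ with $O_\alpha=\bigcup_iB_i$, $B_i\subset O_\alpha$, $r_i\approx\dist(x_i,\partial\Omega\setminus O_\alpha)$, bounded overlap $\sum_i\chi_{B_i}\le C$, and $\lambda B_i\cap(\partial\Omega\setminus O_\alpha)\neq\emptyset$ for a fixed $\lambda>1$; integrating the overlap bound and inserting the estimate for $|O_\alpha|$ immediately gives the last two conclusions of the theorem. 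Fixing in each $\lambda B_i\cap(\partial\Omega\setminus O_\alpha)$ a Lebesgue point $z_i$ --- so that $\fint_{B'}|f|\le\alpha$ and $\fint_{B'}Nf\le\alpha$ for every ball $B'\ni z_i$ --- and a Lipschitz partition of unity $\{\varphi_i\}$ subordinate to $\{B_i\}$ with $0\le\varphi_i\le1$, $\sum_i\varphi_i=\chi_{O_\alpha}$, $\supp\varphi_i\subset B_i$, $|\nabla\varphi_i|\le C/r_i$, I set
$$b_i=(f-f_{B_i})\,\varphi_i,\qquad g=f-\sum_ib_i=f\,\chi_{\partial\Omega\setminus O_\alpha}+\sum_if_{B_i}\varphi_i.$$

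For the bad pieces, $\supp b_i\subset B_i$ is clear; enlarging $B_i$ to a ball $B_i'\ni z_i$ with $B_i\subset B_i'$ and $r(B_i')\approx r_i$, the very definition of $Nf$ gives $\fint_{B_i}|f-f_{B_i}|\le C\fint_{B_i'}|f-f_{B_i'}|\le Cr(B_i')\,Nf(z_i)\le Cr_i\alpha$, whence $\|b_i\|_{L^1}\le Cr_i\alpha|B_i|$; then \eqref{6inBD09} on $B_i'$ (or H\"older combined with the previous bound), the estimate $\|Nf\|_{L^q(B_i)}\le C\alpha|B_i|^{1/q}$, and the splitting $\nabla b_i=\varphi_i\nabla f+(f-f_{B_i})\nabla\varphi_i$ yield $\|b_i\|_{L^q(B_i)}+\|\nabla b_i\|_{L^q(B_i)}\le C\alpha|B_i|^{1/q}$. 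For the good piece I would check $|g|+|\nabla g|\le C\alpha$ a.e.\ separately on $\partial\Omega\setminus O_\alpha$ --- where $|f|\le Mf\le\alpha$ and $|\nabla f|\le C\,Nf\le CM(Nf)\le C\alpha$ --- and on $O_\alpha$, where at most $C$ of the $\varphi_i$ are nonzero at a point, $|f_{B_i}|\le\fint_{B_i}|f|\le C\fint_{B_i'}|f|\le C\alpha$, and $|f_{B_i}-f_{B_j}|\le Cr_i\alpha$ for overlapping balls by the same oscillation estimate on a common enlargement through some $z_i$; since $\sum_i\nabla\varphi_i\equiv0$ on the open set $O_\alpha$ we get $\nabla g(x)=\sum_i(f_{B_i}-f_{B_{i_0}})\nabla\varphi_i(x)$ for $x\in B_{i_0}$, hence $|\nabla g(x)|\le C\alpha$. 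A routine estimate gives $\sum_i\|b_i\|_{W^{1,1}}\le C(\|f\|_{\mathcal C^1}+\alpha\sum_i|B_i|)<\infty$, so the series converges in $W^{1,1}$, the displayed expression is genuinely $\nabla g$, and thus $g\in W^{1,\infty}$; together with $f=g+\sum_ib_i$ all assertions follow.

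The step I expect to be the main obstacle is the Lipschitz bound on $g$, that is $|\nabla g|\le C\alpha$ on the patched set $O_\alpha$: this is exactly where the cancellation $\sum_i\nabla\varphi_i=0$ on $O_\alpha$ must be combined with the comparison $|f_{B_i}-f_{B_j}|\lesssim r_i\alpha$ of averages over neighbouring Whitney balls, and where using $Nf$ instead of a pointwise gradient in the definition of $O_\alpha$ is essential. Secondary technical points are the input $|\nabla f|\lesssim Nf$ for $f\in\mathcal C^1$ (which rests on the maximal--function characterization of Hardy--Sobolev spaces) and the treatment of the largest Whitney balls, those with $|B_i|$ comparable to $|\partial\Omega|$, where the local Poincar\'e step is replaced by a global Poincar\'e inequality on $\partial\Omega$ as in the discussion preceding \eqref{L1Cond}.
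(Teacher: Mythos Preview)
Your proposal is correct and is precisely the Badr--Dafni argument the paper invokes: the paper's own proof of this theorem consists of the single sentence ``The same proof as in Proposition~4.6 of \cite{BD09} works here,'' and what you have written is a faithful reconstruction of that proof (Whitney decomposition of the superlevel set of $Mf+M(Nf)$, partition of unity, $b_i=(f-f_{B_i})\varphi_i$, with the oscillation bounds supplied by the definition of $Nf$ and \eqref{6inBD09}, and the Lipschitz bound on $g$ coming from $\sum_i\nabla\varphi_i=0$ on $O_\alpha$ together with the neighbouring--ball comparison $|f_{B_i}-f_{B_j}|\lesssim r_i\alpha$). There is no substantive difference in approach to report.
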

\begin{proof}
The same proof as in Proposition 4.6 of \cite{BD09} works here.
\end{proof}
\begin{theorem}\label{ContinuousAtoms}
Let $f,q$ and $s$ be as in Theorem \ref{VCZDHS} and
$q^*=\frac{sq}{s-q}(>1)$. There exists a family of Hardy-Sobolev
$(1,q^*)$-atoms $\{a_j\}_j$ such that
$$f=\sum \lambda_ja_j\qquad\text{ and }\sum_j |\lambda_j|\leq C||f||_{\mathcal C^1}.$$
Thus $\mathcal C^1\subset \HSt{t}$ for $1<t\leq q^*$.
\end{theorem}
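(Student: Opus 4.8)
\emph{Proof plan.} The plan is to build the atomic decomposition by iterating the Calder\'on--Zygmund decomposition of Theorem \ref{VCZDHS} along the dyadic sequence of heights $\alpha_k=2^k\alpha_0$, $k\geq 0$, where $\alpha_0=C_\Omega\|f\|_{\mathcal C^1}$, and then telescoping the good parts. Applying Theorem \ref{VCZDHS} at height $\alpha_k$ gives $f=g^k+\sum_i b_i^k$ with $|g^k|+|\nabla g^k|\leq C\alpha_k$, $\supp b_i^k\subset B_i^k$ for Whitney-type balls of the bad set $O_{\alpha_k}$ at height $\alpha_k$, with $\sum_i\chi_{B_i^k}\leq C$ and $\sum_i|B_i^k|\leq C|O_{\alpha_k}|$. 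First I would record two facts. Since $Nf\gtrsim|\nabla f|$ a.e.\ we have $\mathcal C^1\subset W^{1,1}(\partial\Omega)$; the Sobolev embedding on the compact $(n-1)$-dimensional boundary then gives $f\in L^{p_0}(\partial\Omega)$ with $p_0=\frac{n-1}{n-2}>1$ and $\|f\|_{L^{p_0}}\lesssim\|f\|_{\mathcal C^1}$, and since the Hardy--Littlewood maximal operator is bounded on $L^{p_0}$ and $|\partial\Omega|<\infty$, this yields $\|Mf\|_{L^1(\partial\Omega)}\lesssim\|f\|_{\mathcal C^1}$ --- in particular $Mf\in L^1$, which is what makes the scheme converge. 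Second, since $O_{\alpha_k}\subset\{Mf>c\alpha_k\}\cup\{Nf>c\alpha_k\}$, the refined weak-type bound $\alpha|\{Mf>\alpha\}|\leq C\int_{\{|f|>\alpha/2\}}|f|$ together with $Nf\in L^1$ gives $|O_{\alpha_k}|\to 0$ and $\alpha_k|O_{\alpha_k}|\to 0$; combined with the bounds on $g^k$ and $\supp(f-g^k)\subset C\,O_{\alpha_k}$ this gives $g^k\to f$ in $W^{1,1}(\partial\Omega)$.

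Next I would telescope: $f=g^0+\sum_{k\geq 0}(g^{k+1}-g^k)$, convergent in $W^{1,1}$. The term $g^0$ is supported in $\partial\Omega$, which lies in a fixed ball $B_0$ with $|B_0\cap\partial\Omega|\approx 1$, and satisfies $\|g^0\|_{L^{q^*}}+\|\nabla g^0\|_{L^{q^*}}\leq C\alpha_0$; as our atoms carry no cancellation requirement, $g^0=\lambda_0 a_0$ with $a_0$ a Hardy--Sobolev $(1,q^*)$-atom and $\lambda_0\leq C\|f\|_{\mathcal C^1}$. For the remaining terms, $g^{k+1}-g^k=\sum_i b_i^k-\sum_j b_j^{k+1}$ is supported in a fixed dilate of $O_{\alpha_k}$; picking a partition of unity $\{\varphi_i^k\}$ adapted to $\{B_i^k\}$ with $\|\nabla\varphi_i^k\|_\infty\lesssim (r_i^k)^{-1}$, $\sum_i\chi_{\supp\varphi_i^k}\leq C$, and $\sum_i\varphi_i^k\equiv 1$ on the support of $g^{k+1}-g^k$, I would set $h_i^k=(g^{k+1}-g^k-c_i^k)\varphi_i^k$, where $c_i^k$ is the constant produced by (\ref{6inBD09}) for $g^{k+1}-g^k$ on $\lambda B_i^k$. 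The bounds $|g^{k+1}-g^k|+|\nabla(g^{k+1}-g^k)|\leq C\alpha_k$ control the terms $\nabla(g^{k+1}-g^k)\varphi_i^k$ and $(g^{k+1}-g^k-c_i^k)\varphi_i^k$, while the Sobolev--Poincar\'e gain (\ref{6inBD09}) --- applied here to the telescoped difference, not to $f$ itself --- supplies the decisive extra factor $r_i^k$ in $\|g^{k+1}-g^k-c_i^k\|_{L^{q^*}(B_i^k)}\lesssim r_i^k\alpha_k|B_i^k|^{1/q^*}$, which absorbs the $(r_i^k)^{-1}$ loss from $\nabla\varphi_i^k$. Altogether $\|h_i^k\|_{L^{q^*}}+\|\nabla h_i^k\|_{L^{q^*}}\lesssim\alpha_k|B_i^k|^{1/q^*}$, so $h_i^k=\lambda_i^k a_i^k$ with $a_i^k$ a $(1,q^*)$-atom for $B_i^k$ and $\lambda_i^k\approx\alpha_k|B_i^k|$.

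The leftover correction $R^k:=\sum_i c_i^k\varphi_i^k$ must also be disposed of: since $\sum_i\nabla\varphi_i^k\equiv 0$ on the set where $\sum_i\varphi_i^k\equiv 1$, one may replace $c_i^k$ by $c_i^k-c_j^k$ on $B_j^k$, and (\ref{6inBD09}) gives $|c_i^k-c_j^k|\lesssim\alpha_k r_i^k$ for neighbouring balls; hence $R^k$ enjoys the same $W^{1,\infty}$ bound $\lesssim\alpha_k$ and support in a dilate of $O_{\alpha_k}$ as a good part and can be folded into a secondary decomposition against the level-$(k+1)$ Whitney balls without changing the order of the coefficients. Collecting everything, $f=\lambda_0 a_0+\sum_{k,i}\lambda_i^k a_i^k$ (plus the remainder contributions) into $(1,q^*)$-atoms, the series converging in $W^{1,1}$, and
\[\textstyle\sum_j|\lambda_j|\ \approx\ \sum_{k\geq 0}\alpha_k\sum_i|B_i^k|\ \lesssim\ \sum_{k\geq 0}\alpha_k|O_{\alpha_k}|\ \lesssim\ \int_0^\infty|O_t|\, dt\ \leq\ \|Mf\|_{L^1}+\|Nf\|_{L^1}\ \lesssim\ \|f\|_{\mathcal C^1},\]
the third inequality using that $t\mapsto|O_t|$ is nonincreasing and $\alpha_k=2^k\alpha_0$, and the last using the $L^1$-bound for $Mf$ from the first paragraph together with $\|Nf\|_{L^1}\leq\|f\|_{\mathcal C^1}$. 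This gives $f\in\HSt{q^*}$ with $\|f\|_{\HSt{q^*}}\leq C\|f\|_{\mathcal C^1}$; since a $(1,q^*)$-atom is a $(1,t)$-atom for $1<t\leq q^*$ (H\"older on its defining ball), $\mathcal C^1\subset\HSt{t}$ follows.

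I expect the bookkeeping of the second and third paragraphs to be the main obstacle: invoking (\ref{6inBD09}) on the differences $g^{k+1}-g^k$ with constants $c_i^k$ of the right size and mutual compatibility, and eliminating the correction term $R^k$ while keeping every coefficient of size $\alpha_k|B_i^k|$. The coefficient summation is the other delicate point, and the reason it closes is the --- perhaps not entirely obvious --- fact that $Mf\in L^1(\partial\Omega)$ for $f\in\mathcal C^1$, which comes from the Sobolev embedding on the bounded boundary $\partial\Omega$.
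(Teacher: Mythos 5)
Your proposal follows essentially the same route as the paper: dyadic Calder\'on--Zygmund decompositions at heights $2^k\alpha_0$, telescoping $f=g^0+\sum_{k\geq 0}(g^{k+1}-g^k)$ in $W^{1,1}$, treating the bottom good part $g^0$ as a single global atom (the point where boundedness of $\partial\Omega$ is used), and decomposing the telescoped differences into atoms via a partition of unity together with the Poincar\'e--Sobolev gain (\ref{6inBD09}) --- exactly the strategy the paper carries out by reference to Proposition 4.7 of \cite{BD09}. Your extra observations (that the Sobolev embedding of $W^{1,1}(\partial\Omega)$ into $L^{(n-1)/(n-2)}$ guarantees $Mf\in L^1$, which is what closes the coefficient sum, and the bookkeeping of the correction term $R^k$) are details the paper leaves implicit in \cite{BD09}; they are consistent with that proof, though your disposal of $R^k$ is sketched rather than carried out in full.
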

\begin{proof}
The major difference to the proof of Proposition 4.7 in
\cite{BD09} is the fact that our domain is bounded. Let $\alpha_0$
be as in the proof of Theorem \ref{VCZDHS}. Then for every $j\geq
j_0$ with $j_0$ the smallest integer such that $2^{j_0}>\alpha_0$
we apply Theorem \ref{VCZDHS} to get
$$f=g^j + \sum_i b_i^j.$$
Following the proof in \cite{BD09}, we see that we can write
$$f=\sum_{j\geq j_0} (g^{j+1} - g^j) + g^{j_0}$$
in the $W^{1,1}$ sense. The terms $(g^{j+1}-g^j)$ are treated as
in \cite{BD09}. The term $g^{j_0}$ is seen after a normalization
as an atom for $\partial\Omega$. Then one can follow the proof of
the Proposition 4.7 in \cite{BD09} to complete the proof.
\end{proof}
\noindent{\it Remark.} From the construction of the atoms $a_j$ we
see that if $f\in C^0(\partial\Omega)$, then the $a_j$ are in
$C^0(\partial\Omega)$.\vglue2mm

Since in our setting Poincar\'e's inequality on $L^1$ holds and
every $(1,t)$-atom can be decomposed in a $(1,\infty)$-atom and a
$(1,t)$-atom that satisfies the cancellation condition, Theorem
0.1 in \cite{BB10} gives:
\begin{theorem}
$\HSt{t_1} = \HSt{t_2}$ for all $1<t_1,t_2\leq \infty$. The norms
are comparable, where the implicit constant depends on $t_1$ and
$t_2$.
\end{theorem}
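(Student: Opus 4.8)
The plan is to prove the two inclusions $\HSt{t_1}\subseteq\HSt{t_2}$ and $\HSt{t_2}\subseteq\HSt{t_1}$; by the symmetry of the statement in $(t_1,t_2)$ we may assume $1<t_2\le t_1\le\infty$. The first inclusion, from the larger to the smaller exponent, is immediate: if $a$ is a Hardy--Sobolev $(1,t_1)$-atom corresponding to a ball $B$, then $a$ is supported in $B$ with $|B\cap\dom|\lesssim|B|$, so H\"older's inequality gives $\|a\|_{L^{t_2}}+\|\nabla a\|_{L^{t_2}}\le C|B|^{\frac1{t_2}-\frac1{t_1}}\bigl(\|a\|_{L^{t_1}}+\|\nabla a\|_{L^{t_1}}\bigr)\le C|B|^{-1/t_2'}$, i.e.\ $a$ is a bounded multiple of a $(1,t_2)$-atom. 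Hence every atomic decomposition witnessing $f\in\HSt{t_1}$ also witnesses $f\in\HSt{t_2}$, and $\|f\|_{\HSt{t_2}}\le C(t_1,t_2)\|f\|_{\HSt{t_1}}$.

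For the converse I would isolate a \emph{splitting lemma}: any $(1,t_2)$-atom $a$ on a ball $B$ can be written $a=a^\sharp+a^0$, with $a^\sharp$ an $O(1)$ multiple of a $(1,\infty)$-atom and $a^0$ an $O(1)$ multiple of a $(1,t_2)$-atom satisfying the cancellation $\int_{\dom}a^0=0$. One builds it by subtracting the right multiple of a fixed bump: on a dilate $\tilde B$ of $B$ with $|\tilde B|\sim|B|$ (taking $\tilde B=\dom$ and the constant $\eta=|\dom|^{-1}$, which is a legitimate $(1,\infty)$-atom because $\dom$ is compact, whenever $B$ is already at the top scale) pick $\eta_{\tilde B}\in C^\infty(\dom)$ supported in $\tilde B$ with $\int\eta_{\tilde B}=1$, $\|\eta_{\tilde B}\|_\infty\lesssim|\tilde B|^{-1}$ and $\|\nabla\eta_{\tilde B}\|_\infty\lesssim r(B)^{-1}|\tilde B|^{-1}$, and set $a^\sharp=\bigl(\int_\dom a\bigr)\eta_{\tilde B}$, $a^0=a-a^\sharp$. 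What makes the normalizations close is the bound $\bigl|\int_\dom a\bigr|\le\|a\|_{L^1(\dom)}\le C\,r(B)$, valid for every $(1,t_2)$-atom by the $L^1$-Sobolev/Poincar\'e inequality on $\dom$ already used around \eqref{L1Cond}; together with $r(B)\le C$ (boundedness of $\Omega$) it gives $\|a^\sharp\|_\infty+\|\nabla a^\sharp\|_\infty\lesssim|\tilde B|^{-1}$ and $\|a^0\|_{L^{t_2}}+\|\nabla a^0\|_{L^{t_2}}\lesssim|\tilde B|^{-1/t_2'}$, while by construction $\int a^0=0$ and $\supp a^0\subseteq\tilde B$.

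Granting this lemma, the proof assembles as follows. Given $f\in\HSt{t_2}$, pick a decomposition $f=\sum_j\la_j a_j$ with $\sum_j|\la_j|\le2\|f\|_{\HSt{t_2}}$, split each atom $a_j=a_j^\sharp+a_j^0$, and regroup $f=\bigl(\sum_j\la_j a_j^\sharp\bigr)+\bigl(\sum_j\la_j a_j^0\bigr)$ in $W^{1,1}$. The first series is already an atomic decomposition of its sum into $(1,\infty)$-atoms — hence, by the easy inclusion, into $(1,t_1)$-atoms — with $\ell^1$-coefficient sum $\le C\|f\|_{\HSt{t_2}}$. For the second series, $\sum_j\la_j a_j^0$ lies in the homogeneous atomic Hardy--Sobolev space built from \emph{cancellation} $(1,t_2)$-atoms, with norm $\le C\|f\|_{\HSt{t_2}}$, and Theorem~0.1 of \cite{BB10} — applicable because $\dom$ carries a finite doubling measure satisfying the $L^1$-Poincar\'e inequality — identifies this space, with comparable norm, with the corresponding space of cancellation $(1,t_1)$-atoms. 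Concatenating the two decompositions exhibits $f\in\HSt{t_1}$ with $\|f\|_{\HSt{t_1}}\le C(t_1,t_2)\|f\|_{\HSt{t_2}}$; with the first inclusion this yields $\HSt{t_1}=\HSt{t_2}$ and comparability of norms.

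The step I expect to cost the most effort is the splitting lemma — specifically, showing that the constant mass $\int_\dom a$ of a $(1,t_2)$-atom can be absorbed into a single $(1,\infty)$-atom with an $O(1)$ coefficient \emph{uniformly in the radius of the supporting ball}. This is precisely where the boundedness of $\Omega$ (so that $\eta=|\dom|^{-1}$ is a genuine top-scale $(1,\infty)$-atom) and the $L^1$-Sobolev/Poincar\'e inequality on $\dom$ (through $\|a\|_{L^1}\lesssim r(B)$) enter, and it is the reason one works on a bounded Lipschitz boundary rather than in the general setting of \cite{BD09, BB10}. The appeal to \cite{BB10} itself is a black box once cancellation has been arranged. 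Finally, for the later use of this theorem we note — exactly as in the remark after Theorem~\ref{ContinuousAtoms} — that if $f\in C^0(\dom)$ then all atoms produced above can be chosen in $C^0(\dom)$, since the splitting lemma only adds a smooth bump (or a constant) and the decomposition underlying \cite{BB10} is built from Calder\'on--Zygmund pieces, which preserve continuity.
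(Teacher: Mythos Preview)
Your proposal is correct and follows essentially the same route as the paper: the paper's one-line proof states that ``every $(1,t)$-atom can be decomposed in a $(1,\infty)$-atom and a $(1,t)$-atom that satisfies the cancellation condition'' (your splitting lemma) and then invokes Theorem~0.1 of \cite{BB10}, using that $\dom$ supports the $L^1$-Poincar\'e inequality. You have simply written out in detail the construction and the norm estimates that the paper leaves implicit.
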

Thus we can define $\HS=\HSt{t}$ for any $1<t\leq \infty$ and we will impose the norm of $\HSt{\infty}$ on $\HS$.\\
We finish this section with a result about the $\mathcal
C^q$-norm, which is equivalent to the $\HS$-norm in the $q=1$
case. In order to keep the notation simple, we assume that we work
on $\mathbb R^n$ instead of $\partial \Omega$.
\begin{lemma}\label{MainHSEstimate}
Fix $0<R$ and $0<q\leq 1$. Let $\varphi \in C_0^{\infty}(\mathbb
R^n)$ be supported in $B_{2R}(0)$ with values in $[0,1]$,
$\varphi\equiv 1$ on $B_R(0)$ and $|\nabla \varphi|\leq
\frac{C}{R}$. Assume that $f\in \mathcal C_q^1\cap C^0$ and let
$C_R=\fint_{B_{2R}(0)} f$. Then there exists and $C_0$ independent
of $f$ such that
$$||\varphi(f-C_R)||_{\mathcal C^q}^q\leq C_q R^n M [M(\nabla f)^q](x) + C_q R^n R^q M(|\nabla f|)(x)^q$$
for any $x\in B_{C_0R}(0)$.
\end{lemma}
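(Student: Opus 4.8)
The plan is to estimate the two pieces of the $\mathcal C^q$-norm of $\varphi(f-C_R)$ separately: the local part $\|\varphi(f-C_R)\|_{L^q}$ and the maximal-function part $\|(\varphi(f-C_R))_q^b\|_{L^q}$. For the first piece, since $\varphi$ is supported in $B_{2R}(0)$ and $C_R$ is the average of $f$ over $B_{2R}(0)$, a Poincar\'e-type inequality (in the form \eqref{6inBD09} with $q^*$ replaced by a pedestrian $L^q$-estimate, or simply the standard $L^1$-Poincar\'e inequality followed by H\"older) bounds $\fint_{B_{2R}}|f-C_R|^q$ by $C R^q \fint_{\lambda B_{2R}} |Nf|^q$, and hence by $C R^q M(|\nabla f|^q)(x)$ for any $x\in B_{C_0 R}(0)$ once $C_0$ is chosen large enough that $\lambda B_{2R}(0)$ is contained in a ball centred at $x$ of radius comparable to $R$. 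Multiplying by $|B_{2R}|\sim R^n$ produces the second term on the right-hand side. One should note that $Nf = f_1^b$ is controlled pointwise by $M(|\nabla f|)$ via \eqref{6inBD09} with $q=1$, which is where the $R^q M(|\nabla f|)(x)^q$ shape comes from.

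For the maximal-function part, fix $x$ and a ball $B\ni x$ contained in $\mathbb R^n$; we must bound $\frac{1}{r(B)^q}\inf_c \fint_B |\varphi(f-C_R) - c|^q$. Split into two regimes. If $r(B)\gtrsim R$, then $B$ is comparable to or larger than the support of $\varphi$, and one estimates crudely by taking $c=0$: $\fint_B|\varphi(f-C_R)|^q \le \frac{1}{|B|}\int_{B_{2R}}|f-C_R|^q \lesssim \frac{R^n}{|B|} R^q M(|\nabla f|^q)(x)$ by the Poincar\'e bound again, and dividing by $r(B)^q \gtrsim R^q$ kills the $R^q$, leaving $\frac{R^n}{|B|}M(|\nabla f|^q)(x) \lesssim M(|\nabla f|^q)(x)$ since $|B|\gtrsim R^n$. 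If instead $r(B)\lesssim R$, then on $B$ one writes $\varphi(f-C_R) - \varphi_B(f-C_R)_B$ (a natural choice of $c$) and uses the product/Leibniz structure: $\nabla(\varphi(f-C_R)) = \varphi\nabla f + (f-C_R)\nabla\varphi$. The term $\varphi\nabla f$ contributes, through \eqref{6inBD09} applied on $B$, a factor $r(B) (\fint_{\lambda B}|Nf|^q)^{1/q}$ and hence $M(|\nabla f|^q)(x)^{1/q}$ after dividing by $r(B)$; the term $(f-C_R)\nabla\varphi$ is supported where $|\nabla\varphi|\le C/R$ and $|f - C_R|$ is again Poincar\'e-controlled on a slightly larger ball, and the extra $1/R$ together with the division by $r(B)\le 2R$... here one must be careful, because $r(B)$ can be much smaller than $R$.

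The main obstacle, and the delicate point of the argument, is exactly this last estimate on small balls $B$ with $r(B)\ll R$ that meet the transition region $B_{2R}\setminus B_R$ where $\nabla\varphi$ is active: one cannot afford to lose a factor $R/r(B)$. The resolution is that on such a small ball the oscillation of $f-C_R$ is itself controlled by $r(B)$ times a maximal function of $\nabla f$ (again \eqref{6inBD09}), so $(f-C_R)\nabla\varphi$ has an $L^q$ oscillation over $B$ that is at most $\frac{C}{R}\cdot r(B)\cdot (\fint_{\lambda B}|Nf|^q)^{1/q} \cdot |B|^{1/q}$ up to a term involving the value of $f-C_R$ at the centre, which one absorbs into $c$; dividing by $r(B)$ yields $\frac{C}{R}(\fint_{\lambda B}|Nf|^q)^{1/q}$, a bounded multiple of $M(|\nabla f|^q)(x)^{1/q}$, with no blow-up. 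Taking the supremum over all admissible $B$, raising to the $q$-th power, and multiplying by the volume factor $R^n$ coming from integrating over $x\in B_{C_0R}(0)$ (implicit in the way the estimate is quoted) gives both terms $C_q R^n M[M(\nabla f)^q](x)$ and $C_q R^n R^q M(|\nabla f|)(x)^q$. The iterated maximal function $M[M(\nabla f)^q]$ appears precisely because the inner estimate already involves $(\fint_{\lambda B}|Nf|^q)$, i.e.\ a maximal function evaluated at points of $\lambda B$, over which we then take a further supremum as $B$ ranges over balls through $x$; quoting the combination $M[M(\cdot)^q]$ is the clean way to bookkeep this double averaging without tracking the dilation constant $\lambda$.
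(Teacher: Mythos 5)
The overall architecture of your argument is the same as the paper's: estimate the $L^q$ piece and the $f_q^b$ maximal piece separately, split the supremum into balls $B$ with $r(B)\gtrsim R$ versus $r(B)\lesssim R$, and use the Leibniz structure $\nabla(\varphi(f-C_R))=\varphi\nabla f+(f-C_R)\nabla\varphi$ to reduce to maximal functions of $\nabla f$. However, there is a genuine gap at exactly the point you single out as delicate.

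When you bound the $f_q^b$-type quantity on a small ball $B$ with $r(B)\ll R$ that meets $B_{2R}\setminus B_R$, after applying a Poincar\'e-type inequality (whichever form: the $L^1$ version after H\"older, or (\ref{6inBD09}) directly) the free constant $c$ from $\inf_c$ is \emph{already consumed}, and what remains to control is $\fint_B|\nabla(\varphi(f-C_R))|$ or its $L^q$ analogue. The problematic term is $(f-C_R)\nabla\varphi$, and splitting $f-C_R=(f-f_B)+(f_B-C_R)$ gives a piece $(f_B-C_R)\nabla\varphi$. You assert that this "term involving the value of $f-C_R$ at the centre" can be absorbed into $c$, but it cannot: $\nabla\varphi$ is not constant on $B$, so $(f_B-C_R)\nabla\varphi$ is not a constant, and in any case the infimum over $c$ has already been spent via Poincar\'e. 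The estimate $|f_B-C_R|\cdot\frac{C}{R}$ must be controlled outright, and doing so requires showing $|f_B-C_R|\lesssim R\,M(|\nabla f|)(x)$ uniformly in $r(B)\leq R$. This comparison of the average over the tiny ball $B$ with the average $C_R$ over the fixed ball $B_{2R}(0)$ does not follow from a single Poincar\'e inequality on $B$; one must bridge the scales. The paper does this by first reducing to a centered dyadic maximal function and then telescoping $|f-C_R|\leq|f-f_{B_j}|+\sum_{k}|f_{B_{k+1}}-f_{B_k}|$ along the chain $B_j=B(x,R2^{-j+1})$, with each increment bounded by $2^{-k}R\fint_{B_k}|\nabla f|$; the geometric decay of $2^{-k}$ is what prevents the loss of a factor $R/r(B)$. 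Your write-up recognizes that something nontrivial is happening here but does not supply the telescoping, and the "absorb into $c$" move fails to replace it.

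A secondary, stylistic observation: the paper first applies H\"older's inequality (valid since $q\leq 1$) to pass from the $L^q$ oscillation to the $L^1$ oscillation, $\inf_c\left(\fint_B|\cdot-c|^q\right)^{1/q}\leq\fint_B|\cdot-(\cdot)_B|$, and then applies the standard $L^1$ Poincar\'e inequality. This avoids any appeal to (\ref{6inBD09}) inside the pointwise claim and makes the Leibniz step elementary. Your proposal invokes the fractional Sobolev--Poincar\'e inequality (\ref{6inBD09}) at several points where the simple H\"older plus $L^1$ Poincar\'e suffices; this is not wrong, but it is heavier machinery than needed and obscures where the iterated maximal function $M[M(|\nabla f|)^q]$ truly enters (namely from integrating the pointwise bound $(\varphi[f-C_R])_q^b(x)\leq CM(|\nabla f|)(x)$ over $x\in B_{2R}(0)$).
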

\begin{proof}
First we claim that for $x\in B_{2R}(0)$ one has
$(\varphi[f-C_R])^b_{q}(x)\leq C M(|\nabla f|)(x)$. For $x\in
B_{2R}(0)$ H\"older's inequality implies
\begin{align*}
(\varphi[f-C_R])^b_{q}(x) &=\sup_{B\ni x}\inf_{c\in \mathbb R}\frac{1}{|B|^{\frac{1}{n}}}\left(\fint_{B} |\varphi(f-C_R)-c|^q\right)^{\frac{1}{q}}\\
&\leq \sup_{B\ni x}\frac{1}{|B|^{\frac{1}{n}}}\fint_{B} |\varphi(f-C_R)-(\varphi[f-C_R])_B|\\
&\leq C \sup_{B\ni x}\fint_B |\nabla (\varphi[f-C_R])|\\
&\leq C \sup_{B\ni x}\fint_B |\nabla \varphi||f-C_R|+\sup_{B\in x}\fint_B\varphi |\nabla f|\\
&\leq \frac{C}{R}\sup_{\stackrel{B\ni x}{r(B)>R}}\fint_B\chi_{B_{2R}} |f-C_R| + \frac{C}{R} \sup_{\stackrel{B\ni x}{r(B)\leq R}}\fint_B\chi_{B_{2R}} |f-C_R|\\
&\quad+ M(|\nabla f|)(x)\\
&=I+ II + M(|\nabla f|)(x).
\end{align*}
For $I$ observe that $B\cap B_{2R}(0)\neq \emptyset$ implies
$B_{2R}(0)\subset 5B$ and so
\begin{align*}
I \leq \frac{C}{R}\sup_{\stackrel{B\ni x}{r(B)>R}} \frac{1}{|B|}
\int_{B_{2R}}|f-C_R| \leq \frac{C}{|B_R|}\int_{B_{2R}}|\nabla f|
\leq M(|\nabla f|)(x).
\end{align*}
For $II$, we first use the fact that the uncentered maximal
function is dominated by $c_n$ times the centered dyadic maximal
function. Hence it is enough to consider for the supremum balls of
the form $B_j=B(x,R2^{-j+1}),j\geq 0$:
\begin{align*}
II & \leq \frac{C}{R} \sup_{j\geq 0} \fint_{B_j} |f-C_R|\\
&\leq \frac{C}{R} \sup_{j\geq 0} \fint_{B_j} |f-f_{B_j}| + \frac{C}{R}\sum_{j\geq 0} |f_{B_{j+1}}-f_{B_j}|\\
&\leq \frac{C}{R}\sup_{j\geq 0} 2^{-j}R\fint_{B_j}|\nabla f| + \frac{C}{R} \sum_{j\geq 0}\fint_{B_{j+1}}|f-f_{B_j}|\\
&\leq C M(|\nabla f|)(x) + \frac{C}{R} \sum_{j\geq 0} 2^{-j}R \fint_{B_j}|\nabla f|\\
&\leq C M(|\nabla f|)(x)
\end{align*}
i.e. the claim is proved. To use the claim we write
$$||(\varphi[f-C_R])_q^b||^q_q = \int_{B_{2R}(0)} \left((\varphi[f-C_R])_{q}^b\right)^q+ \int_{B_{2R}(0)^c} \left((\varphi[f-C_R])_{q}^b\right)^q.$$
By the previous claim the first term is bounded by $ C
R^nM[M(|\nabla f|)^q](x)$ for any $x\in B_{C_0R}(0)$. For the
second term we will use the fact that if $x\in B$ and $|x|\approx
2^jR$ then for $B\cap B_{2R}\neq \emptyset$ one needs $r(B)\geq C
2^j R$. Thus we have
\begin{align*}
\int_{B_{2R}(0)^c}\left([\varphi(f-C_R)]_{q}^b\right)^q&=\sum_{j\geq 1}\int_{\{|x|\approx 2^jR\}} \left[ \sup_{B\ni x}\inf_{c\in \mathbb R} \frac{1}{|B|^{\frac{1}{n}}}\left(\fint_B |\varphi(f-C_R)-c|^q\right)^{\frac{1}{q}}\right]^q\diff x\\
\{\text{choose $c =0$} \}\quad &\leq\sum_{j\geq 1}\int_{\{|x|\approx 2^jR\}}\left[\sup_{B\ni x}\frac{1}{|B|^{\frac{1}{n}}}\left(\fint_B \chi_{B_{2R}} |f-C_R|^q\right)^{\frac{1}{q}}\right]^q\\
&\leq C \sum_{j\geq 1}\int_{\{|x|\approx 2^jR\}}\left[\frac{1}{2^jR}\left(\frac{1}{(2^{j}R)^n}\int_{B_{2R}} |f-C_R|^q\right)^{\frac{1}{q}}\right]^q\diff x\\
&\leq C \sum_{j\geq 1} (2^jR)^n\frac{1}{(2^jR)^q}\frac{1}{(2^jR)^n}\left(\int_{B_{2R}} |f-C_R|^q\right)\\
&\leq C \sum_{j\geq 1}\frac{1}{(2^jR)^q}\left(\int_{B_{2R}}|f-C_R|\right)^q  |B_{2R}|^{1-q}\\
\quad &\leq C_q \left(\int_{B_{2R}}|\nabla f|\right)^q R^{n(1-q)}\\
&\leq C_q R^nM(|\nabla f|)(x)^q
\end{align*}
for any $x\in B_{C_0R}(0)$.\\
To deal with the $L^q$-norm of $\varphi(f-C_R)$ one applies
H\"older's inequality and Poincar\'e's inequality to get
$||\varphi(f-C_R)||_{L^q}\leq C R^n R^q M(|\nabla f|)(x)^q$ for
any $x\in B_{C_0R}$. Thus the proof of the Lemma is complete.
\end{proof}

\section{The Regularity Problem for boundary data in $\HS$}
We start this section by adjusting some results from \cite{KP93}
to the $(R)_{\HS}$-case. By the proof of Theorem 3.1 in
\cite{KP93} and the Vitali-Hahn-Soks Theorem (see for example
\cite{Doo94}, p.155) we get for
$$(\nabla_Tu)_r(Q)=\fint_{B_{r/2}(A_r(Q))}\nabla u(X)\cdot \vec{T}(Q)\diff X$$
\begin{theorem}\label{HSKP9331}
Assume that $u\in W_{loc}^{1,2}(\Omega)$ solves $Lu=0$ and that
$||\tilN{u}||_{L^1(\partial \Omega)} + ||u||_{L^1(\Omega)}<\infty$
then
\begin{itemize}
\item $u$ converges non-tangentially almost everywhere to a
function $f$ with $f\in W^{1,1}(\partial \Omega)$. \item If $f=0$
almost everywhere, then $u\equiv 0$. \item There exists a sequence $r_j\rightarrow 0$ such that $(\nabla_Tu)_{r_j}$ converges in the weak$^*$ topology of
$(L^{\infty}(\partial \Omega))^*$ to $\nabla_Tf$.
\end{itemize}
\end{theorem}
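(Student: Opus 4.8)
The plan is to follow the strategy of Theorem 3.1 in \cite{KP93}, which treats the analogous statement for $(R)_p$ with $p>1$, and to check that the arguments survive the passage to the endpoint $L^1$ with $\HS$-data. The three bullets should be proved in order. For the first bullet, the hypothesis $\|\tilN{u}\|_{L^1(\dom)}<\infty$ says precisely that the truncated averages $(\nabla u)_r(Q)$ of the gradient are dominated by an $L^1(\dom)$ function uniformly in $r$. Working in a boundary coordinate chart, one fixes a point $Q\in\dom$ and considers the family of functions $r\mapsto (\nabla_T u)_r(Q)$; interior estimates for solutions of $Lu=0$ (Caccioppoli plus the De~Giorgi--Nash--Moser bound) give that $\nabla u$ is locally bounded in $\Omega$, so these averages are well defined and the differences $(\nabla_T u)_{r}(Q)-(\nabla_T u)_{r'}(Q)$ can be controlled, along a fixed nontangential approach curve, by an integral of $\tilN{u}$ over a small surface ball. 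Since $\tilN{u}\in L^1$, for a.e.\ $Q$ this integral tends to $0$, which produces a nontangential limit $f(Q)$ and shows $u\to f$ nontangentially a.e. To see $f\in W^{1,1}(\dom)$ one shows the tangential difference quotients of $f$ are controlled by $N(\nabla u)$: for $Q,Q'$ close on $\dom$ one writes $f(Q)-f(Q')$ as a telescoping sum / line integral of $\nabla u$ along a path in $\Omega$ that stays nontangentially accessible, bounds each piece by $\tilN{u}$ evaluated at nearby boundary points, and integrates; this yields $\|f\|_{W^{1,1}(\dom)}\lesssim \|\tilN{u}\|_{L^1(\dom)}+\|u\|_{L^1(\Omega)}$.

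For the second bullet one argues by a boundary Fatou/uniqueness argument exactly as in \cite{KP93}: if the nontangential limit $f$ vanishes a.e., then, using that $u\in W^{1,1}(\partial\Omega)$-boundary data is zero together with the $L^1$ control on $N(\nabla u)$, a Green's identity / integration-by-parts argument against a suitable test function (or a maximum-principle-type estimate for the Dirichlet problem in the domains $\Omega_\beta$, letting $\beta\to 0$) forces $u\equiv 0$ in $\Omega$. The only point to check is that all boundary integrals that appear make sense with $L^1$ data; this is where the finiteness of $\|\tilN{u}\|_{L^1}+\|u\|_{L^1(\Omega)}$ is used to pass to the limit by dominated convergence.

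For the third bullet, the family $\{(\nabla_T u)_{r}\}_{0<r<r_0}$ is bounded in $L^1(\dom)$ uniformly in $r$ by the first step, hence bounded as a family of functionals on $C^0(\dom)\subset L^\infty(\dom)$; by weak-$*$ sequential compactness of bounded subsets of $(L^\infty(\dom))^*$ (equivalently, of finite measures on $\dom$), along some sequence $r_j\to 0$ the $(\nabla_T u)_{r_j}$ converge weak-$*$ to some functional $\mu$. One then identifies $\mu$ with $\nabla_T f$: testing against $C^\infty_0$ tangential vector fields on $\dom$ and integrating by parts in the boundary variable, the left side converges to $\int \varphi\,d\mu$ while the right side, by the nontangential convergence $u\to f$ from the first bullet and the $L^1$ bound allowing dominated convergence, converges to $-\int f\,\div_T\varphi = \int \varphi\cdot\nabla_T f$. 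The role of the Vitali--Hahn--Saks theorem is exactly to rule out loss of mass / concentration in this limit and to guarantee that the weak-$*$ limit is absolutely continuous, i.e.\ genuinely given by the $L^1$ function $\nabla_T f$ rather than a singular measure.

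The main obstacle is the first bullet, and within it the verification that the nontangential limit exists a.e.\ and that $f$ lands in $W^{1,1}$ with the stated bound --- at $p=1$ one has no room to use H\"older or the boundedness of the Hardy--Littlewood maximal operator on $L^1$, so the telescoping/line-integral estimates must be arranged so that the only object one integrates over $\dom$ is $\tilN{u}$ itself (or $Mf+Nf$ applied to quantities already known to be in $L^1$), never a maximal function of an $L^1$ function. Once the pointwise bounds $|f(Q)-f(Q')|\lesssim |Q-Q'|\,\bigl(\tilN{u}(Q)+\tilN{u}(Q')\bigr)$-type inequalities are in place, the rest is a routine adaptation of \cite{KP93} together with standard weak-$*$ compactness and the Vitali--Hahn--Saks theorem.
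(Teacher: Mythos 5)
Your outline is the route the paper itself points to: rerun the proof of Theorem~3.1 in \cite{KP93} at the endpoint $p=1$, using the Vitali--Hahn--Saks theorem in place of the weak compactness of $L^p$ ($p>1$) when passing to the limit in $(\nabla_T u)_{r_j}$. So the overall plan matches. Two steps in your treatment of the first bullet are, however, wrong as written and should be repaired.

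First, for $L=\divt A\nabla$ with only bounded measurable coefficients, $\nabla u$ is \emph{not} locally bounded: De~Giorgi--Nash--Moser gives interior H\"older continuity of $u$, and Caccioppoli gives $\nabla u\in L^2_{\loc}(\Omega)$, but there is no $L^\infty_{\loc}$ bound on the gradient in this generality. The nontangential limit must be produced through $L^2$-averages of $\nabla u$, which is exactly what $\tilN{u}$ records. Along a chain of Whitney-scale balls $B_i\subset\Gamma(Q)$ approaching $Q$, apply Moser's local sup bound to the solution $u-u_{B_i}$ on $\frac12B_i$ together with Poincar\'e on $B_i$ to get that the oscillation of $u$ on $\frac12B_i$ is $\lesssim r(B_i)\bigl(\fint_{B_i}|\nabla u|^2\bigr)^{1/2}\le r(B_i)\,\tilN{u}(Q)$; summing the geometric series in $i$ yields $|u(X)-u(X')|\lesssim \delta(X)\,\tilN{u}(Q)$ for $X,X'\in\Gamma(Q)$ with $\delta(X')\le\delta(X)$. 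In particular the control is a \emph{pointwise} bound in terms of $\tilN{u}(Q)$ --- not \lq\lq an integral of $\tilN{u}$ over a small surface ball tending to $0$,'' as you wrote --- and the limit exists at every $Q$ with $\tilN{u}(Q)<\infty$, hence a.e.\ since $\tilN{u}\in L^1(\dom)$.

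With those two points corrected, the remainder of the sketch (the uniqueness argument for the second bullet, and the role you assign to Vitali--Hahn--Saks in the third bullet: weak-$*$ compactness in $(L^\infty(\dom))^*$ supplies a limit, its values on indicators give setwise convergence, and Vitali--Hahn--Saks then forces the limit to be countably additive and absolutely continuous with respect to $\sigma$, so that its $L^1$ density may be identified with $\nabla_T f$ by testing against smooth tangential fields and using the nontangential convergence from the first bullet) is consistent with the argument the paper cites.
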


We first observe that the solvability of $(R)_{\HS}$ can be
reduced to proving the estimate (\ref{RHSATO}) for smooth atoms.

\begin{lemma}\label{RpSmoothEnough} Assume that (\ref{RHSATO}) holds for smooth Hardy-Sobolev atoms, then $(R)_{\HS}$ holds.
\end{lemma}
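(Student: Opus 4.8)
The plan is a two-step reduction: first pass from smooth atoms to arbitrary \emph{continuous} Hardy--Sobolev atoms, and then from atoms to all of $\HS\cap C^0(\partial\Omega)$ via the atomic decomposition, exploiting that $u\mapsto\tilN{u}$ is sub-additive.

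\emph{From smooth atoms to continuous atoms.} Let $a$ be a continuous Hardy--Sobolev $(1,t)$-atom supported in a surface ball $\Delta$. Using the graph charts of Definition~\ref{DefLipDomain} and a subordinate partition of unity I would mollify $a$ in the tangential variables to get $a^\e\in C^\infty(\partial\Omega)$ supported in $2\Delta$ with $\|a^\e\|_{L^t}+\|\nabla a^\e\|_{L^t}\le C_\Omega\,|2\Delta|^{-1/t'}$ and $a^\e\to a$ uniformly on $\partial\Omega$; then $C_\Omega^{-1}a^\e$ is a smooth $(1,t)$-atom, so by hypothesis the solution $u^\e$ with data $a^\e$ obeys $\|\tilN{u^\e}\|_{L^1(\partial\Omega)}+\|u^\e\|_{L^1(\Omega)}\le C$ uniformly in $\e$. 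If $u_a$ is the solution with data $a$, the maximum principle from \cite{LSW63} gives $\|u^\e-u_a\|_{L^\infty(\Omega)}\le\|a^\e-a\|_{L^\infty(\partial\Omega)}\to0$, and Caccioppoli's inequality then upgrades this to $\nabla u^\e\to\nabla u_a$ in $L^2_{\loc}(\Omega)$. Hence, for each $Q\in\partial\Omega$ and each $X\in\Gamma(Q)$, the $L^2$-average of $\nabla u_a$ over $B_{\delta(X)/2}(X)$ is the limit of those of $\nabla u^\e$, so $\tilN{u_a}(Q)\le\liminf_\e\tilN{u^\e}(Q)$; Fatou on $\partial\Omega$, together with $L^1(\Omega)$-convergence of the solid term, yields $\|\tilN{u_a}\|_{L^1(\partial\Omega)}+\|u_a\|_{L^1(\Omega)}\le C$ with $C$ independent of the continuous atom $a$.

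\emph{Summation.} For $f\in\HS\cap C^0(\partial\Omega)$, the identification $\HS=\mathcal C^1$ (from the inclusion $\HSt{t}\subset\mathcal C^1$ and Theorem~\ref{ContinuousAtoms}) together with the Remark after Theorem~\ref{ContinuousAtoms} gives a decomposition $f=\sum_j\lambda_j a_j$ into continuous atoms with $\sum_j|\lambda_j|\le C\|f\|_{\HS}$. Set $f_N=\sum_{j\le N}\lambda_j a_j$ and let $v_N$ be the solution with this continuous datum; by uniqueness for the continuous Dirichlet problem $v_N=\sum_{j\le N}\lambda_j u_{a_j}$. Since $u\mapsto N(\nabla u)$ is sub-additive (Minkowski's inequality inside the $L^2$-averages), the previous step gives $\|\tilN{v_N}\|_{L^1(\partial\Omega)}+\|v_N\|_{L^1(\Omega)}\le C\sum_{j\le N}|\lambda_j|\le C\|f\|_{\HS}$, and, applied to $v_N-v_M$, shows that $\{v_N\}$ is Cauchy in $L^1(\Omega)$ and that $\{\tilN{v_N}\}$ is Cauchy in $L^1(\partial\Omega)$. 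Interior estimates for solutions then force $v_N\to u$ locally uniformly with $Lu=0$ and $\nabla v_N\to\nabla u$ in $L^2_{\loc}(\Omega)$; passing to a subsequence along which $\tilN{v_N}$ converges a.e.\ and using $\tilN{u}\le\liminf\tilN{v_N}$ pointwise, Fatou gives $\|\tilN{u}\|_{L^1(\partial\Omega)}+\|u\|_{L^1(\Omega)}\le C\|f\|_{\HS}$.

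\emph{The main obstacle.} What remains, and what I expect to be the genuinely delicate point, is to verify that the limit $u$ constructed above is indeed \emph{the} solution with boundary data $f$ and not merely some solution with the right $\tilN{\cdot}$-mass. Here I would use Theorem~\ref{HSKP9331}: $u$ has a non-tangential trace $\tilde f\in W^{1,1}(\partial\Omega)$, and the quantitative trace estimates contained in the proof of Theorem~3.1 of \cite{KP93} that underlie Theorem~\ref{HSKP9331}---controlling the $W^{1,1}(\partial\Omega)$-norm of the trace of any solution in this class by $\|\tilN{\cdot}\|_{L^1(\partial\Omega)}+\|\cdot\|_{L^1(\Omega)}$, e.g.\ via $\|(\nabla_T w)_r\|_{L^1(\partial\Omega)}\le C\|\tilN{w}\|_{L^1(\partial\Omega)}$ uniformly in $r$ and the weak$^*$ convergence in Theorem~\ref{HSKP9331}---applied to $u-v_N$ show $f_N\to\tilde f$ in $W^{1,1}(\partial\Omega)$; but $f_N\to f$ in $W^{1,1}(\partial\Omega)$ because $\HS\hookrightarrow W^{1,1}$, so $\tilde f=f$. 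The uniqueness statement of Theorem~\ref{HSKP9331} then identifies $u$ with the solution of $Lu=0$, $u|_{\partial\Omega}=f$, which for $f\in C^0(\partial\Omega)$ is the solution appearing in the definition of $(R)_{\HS}$; this gives (\ref{RHSATO}) and hence the lemma. Apart from this identification, the only other point needing a little care is carrying out the mollification intrinsically on the Lipschitz boundary, which is routine in the graph charts.
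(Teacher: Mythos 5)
Your proof follows the same two-step reduction as the paper's, and agrees with it closely in the first step: from smooth atoms to continuous atoms you mollify, use the maximum principle to get uniform convergence $u^\e\to u_a$, use Caccioppoli to upgrade this to interior $W^{1,2}$ convergence, and pass to the limit — the paper does exactly this, finishing with a truncated-below maximal function $N_\e$ and monotone convergence where you finish with Fatou; these are interchangeable.

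In the second step (summation) your treatment diverges in emphasis, and this is where the interesting issue lies. The paper simply asserts $\|\tilN u\|_{L^1}\le\sum_j|\lambda_j|\,\|\tilN{u_j}\|_{L^1}$, which tacitly assumes $\nabla u=\sum_j\lambda_j\nabla u_j$ a.e.\ in $\Omega$, i.e.\ that $v_N=\sum_{j\le N}\lambda_j u_j$ converges to $u$ in $W^{1,2}_{\loc}(\Omega)$. You correctly flag this as the crux: the atomic decomposition only gives $f_N\to f$ in $W^{1,1}(\partial\Omega)$, not uniformly, and since $W^{1,1}(\partial\Omega)$ does not embed in $L^\infty$ when $n\ge3$, the maximum principle does not supply $v_N\to u$ directly. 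Your proposed resolution, however, has a circularity. You want to identify the $L^1$-limit $\tilde u$ of $v_N$ (which you have shown satisfies $N(\nabla\tilde u)\in L^1$ and has non-tangential trace $f$) with the given continuous Dirichlet solution $u$ via the uniqueness in Theorem~\ref{HSKP9331}. But that uniqueness statement lives in the class of solutions with $N(\nabla\cdot)\in L^1(\partial\Omega)$ and finite $L^1(\Omega)$-norm, and whether the given solution $u$ lies in that class is precisely the conclusion the lemma is after, so you cannot apply the uniqueness to $u-\tilde u$ without begging the question. To close the gap one needs a different mechanism — for instance showing $\tilde u\in C^0(\bar\Omega)$ and invoking uniqueness of the \emph{continuous} Dirichlet problem from \cite{LSW63}, or producing some convergence $v_N\to u$ in $\Omega$ by other means. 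The paper's one-line sub-additivity is terser but rests on the same unaddressed identity; your version makes the hidden assumption visible, which is a genuine contribution, but the proposed fix does not yet complete the argument.
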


\begin{proof} We first claim that if (\ref{RHSATO}) holds for all continuous
Hardy-Sobolev atoms then $(R)_{\HS}$ holds. Indeed, let $f\in
\HS\cap C^0(\partial \Omega)$. Then by the remark below Theorem
\ref{ContinuousAtoms} there exist continuous atoms $a_j$ and
scalars $\lambda_j$ such that $f=\sum \lambda_ja_j$. Thus if $u$
is the solution for $f$ and $u_j$ for $a_j$ we have
\begin{align*}
& ||\tilN{u}||_{L^1(\partial \Omega)} \leq \sum_j |\lambda_j| \; ||\tilN{u_j}||_{L^1(\partial \Omega)} \leq C  \sum_j|\lambda_j|,\\
& ||u||_{L^1(\Omega)} \leq
\sum_j|\lambda_j|\;||u_j||_{L^1(\Omega)} \leq C \sum_j|\lambda_j|.
\end{align*}
Since this holds for all decompositions we get $||\tilN{u}||_{L^1(\partial\Omega)} + ||u||_{L^1(\Omega)}\leq C ||f||_{\HS}$ and so the claim holds. Hence it is enough to prove (\ref{RHSATO}) for continuous Hardy-Sobolev atoms $a$ under the assumption that (\ref{RHSATO}) holds for smooth Hardy-Sobolev atoms.\\
Every continuous Hardy-Sobolev atom $a$ can be uniformly
approximated in $\HS$ by smooth Hardy-Sobolev atoms $a_j$ (by the
use of mollifiers). We call the corresponding weak solutions $u$
and $u_j$. The maximum principle implies that $u_j$ converges
uniformly to $u$ on $\bar{\Omega}$, hence $||u||_{L^1(\Omega)}\leq
\lim_j ||u_j||_{L^1(\Omega)}\leq C \lim_j ||a_j||_{\HS}\leq C
||a||_{\HS}$. Let
$$N_{\varepsilon}(h)(Q)=\sup_{\stackrel{X\in \Gamma(Q)}{\delta(X)\geq \varepsilon}} \left(\fint_{B(X,\delta(X)/2)} |h|^2\right)^{\frac{1}{2}}$$
be the truncated below maximal function. Cacciopoli's inequality
and the uniform convergence of $u_j$ to $u$ imply
$N_{\varepsilon}(\nabla u_j - \nabla u)\rightarrow 0$ uniformly on
$\dom$. Therefore
$$\int_{\partial \Omega} N_{\varepsilon}(\nabla u) \leq \lim_{j\rightarrow \infty} \int_{\partial \Omega}N_{\varepsilon}(\nabla u_j)\leq C\lim_j||a_j||_{\HS}\leq C ||a||_{\HS}.$$
Since $N_{\varepsilon}$ increases to $N$ the monotone convergence
theorem completes the proof.
\end{proof}

Recall that when we defined the $(R)_{\HS}$ solvability we only
did it for data in $\HS(\dom) \cap C^0(\dom)$. The following
theorem shows that this is sufficient and that this implies
existence of a unique solution for any data in $\HS(\dom)$.

\begin{theorem}
Assume that $(R)_{\HS}$ holds. Given $f\in \HS$, there exists a
unique $u\in L^1(\Omega)$ with $N(\nabla u)\in L^1(\partial
\Omega)$ such that $Lu=0$ in $\Omega$ and $u$ converges
non-tangentially almost everywhere to $f$. Moreover $(\nabla_T
u)_{r_j}$ ($r_j\to 0$) converges in the weak$^*$ topology of
$(L^{\infty}(\Omega))^*$ to $\nabla_Tf$.
\end{theorem}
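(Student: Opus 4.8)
The plan is to handle uniqueness and existence separately, both times leaning on Theorem~\ref{HSKP9331}. For uniqueness, suppose $u_1,u_2$ both satisfy the conclusion for the same $f$; then $w=u_1-u_2$ solves $Lw=0$ and, by sublinearity of $N(\nabla\,\cdot\,)$, has $\|N(\nabla w)\|_{L^1(\partial\Omega)}+\|w\|_{L^1(\Omega)}<\infty$. By the first bullet of Theorem~\ref{HSKP9331}, $w$ has a non-tangential limit $h\in W^{1,1}(\partial\Omega)$ a.e.; since $u_1,u_2$ both converge non-tangentially a.e.\ to $f$, we get $h=0$ a.e., and the second bullet of Theorem~\ref{HSKP9331} forces $w\equiv0$. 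So uniqueness is essentially free.

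For existence, fix $f\in\HS=\HSt{\infty}$ and write $f=\sum_j\lambda_j a_j$ with $(1,\infty)$-atoms $a_j$ and $\sum_j|\lambda_j|<\infty$; note that then $f_N:=\sum_{j\le N}\lambda_j a_j\to f$ in $L^1(\partial\Omega)$ since $\|a_j\|_{L^1(\partial\Omega)}\le1$. Each $a_j$ is Lipschitz, hence continuous, so by \cite{LSW63} it has a solution $u_j\in W^{1,2}_{\loc}(\Omega)\cap C^0(\bar\Omega)$ with $u_j|_{\partial\Omega}=a_j$, and in particular $u_j\to a_j$ non-tangentially everywhere; since $a_j\in\HS\cap C^0(\partial\Omega)$ with $\|a_j\|_{\HS}\le1$, the hypothesis $(R)_{\HS}$ gives $\|N(\nabla u_j)\|_{L^1(\partial\Omega)}+\|u_j\|_{L^1(\Omega)}\le C$ uniformly in $j$. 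Interior Moser and Caccioppoli estimates then show that the partial sums $S_N:=\sum_{j\le N}\lambda_j u_j$ converge locally uniformly in $\Omega$, with gradients converging in $L^2_{\loc}(\Omega)$, to a solution $u$ of $Lu=0$; combining Fatou with the sublinearity of $N(\nabla\,\cdot\,)$ yields $\|u\|_{L^1(\Omega)}+\|N(\nabla u)\|_{L^1(\partial\Omega)}\le C\sum_j|\lambda_j|<\infty$. Thus Theorem~\ref{HSKP9331} applies to $u$, producing a non-tangential limit $\tilde f\in W^{1,1}(\partial\Omega)$ together with $r_j\to0$ along which $(\nabla_T u)_{r_j}\to\nabla_T\tilde f$ weak$^*$.

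The one genuinely nontrivial step is to identify $\tilde f$ with $f$, and for this I would use the non-tangential maximal function estimate
\begin{equation}\label{NTmaxbound}
\|v^*\|_{L^1(\partial\Omega)}\le C\bigl(\|v\|_{L^1(\Omega)}+\|N(\nabla v)\|_{L^1(\partial\Omega)}\bigr)
\end{equation}
valid for every solution $v$ of $Lv=0$; it follows from chaining balls up a non-tangential cone from a generic $X\in\Gamma(Q)$ to a corkscrew point at height $\sim R_0$ (each oscillation bounded by $r(B)\,N(\nabla v)(Q)$), bounding the corkscrew value by $\|v\|_{L^1(\Omega)}$ via Moser's inequality, and treating the part of $\Gamma(Q)$ with $\delta\gtrsim R_0$ separately, and it is in any case implicit in the \cite{KP93} arguments underlying Theorem~\ref{HSKP9331}. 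Apply (\ref{NTmaxbound}) to the tail $v_N:=u-S_N=\sum_{j>N}\lambda_j u_j$: as above $\|v_N\|_{L^1(\Omega)}+\|N(\nabla v_N)\|_{L^1(\partial\Omega)}\le C\sum_{j>N}|\lambda_j|=:C\eps_N\to0$, so $\|v_N^*\|_{L^1(\partial\Omega)}\le C\eps_N$, and since by Theorem~\ref{HSKP9331} $v_N$ has a non-tangential limit $g_N$ with $|g_N|\le v_N^*$ a.e., we get $\|g_N\|_{L^1(\partial\Omega)}\le C\eps_N$. At a.e.\ boundary point $u=S_N-v_N$ converges non-tangentially to $f_N-g_N$ (recall $S_N\in C^0(\bar\Omega)$ has trace $f_N$), so comparison with $\tilde f$ gives $\tilde f=f_N-g_N$ a.e., whence $\|\tilde f-f\|_{L^1(\partial\Omega)}\le\|f_N-f\|_{L^1(\partial\Omega)}+C\eps_N\to0$, i.e.\ $\tilde f=f$. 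The ``moreover'' assertion is then immediate from the last bullet of Theorem~\ref{HSKP9331} applied to $u$. The crux of the whole proof is concentrated in (\ref{NTmaxbound}); everything else is bookkeeping on top of Theorem~\ref{HSKP9331} and interior regularity.
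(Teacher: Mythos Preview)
Your proof is correct and follows essentially the same route as the paper's: approximate $f$ in $\HS$ by continuous data, pass to the limit using $(R)_{\HS}$ together with interior regularity, and identify the non-tangential trace via the pointwise bound $v^*(Q)\le C\,N(\nabla v)(Q)+C\|v\|_{L^1(\Omega)}$ for solutions $v$ (the paper states and uses exactly this inequality, then runs the same Chebyshev argument you sketch). The only cosmetic difference is that the paper first mollifies to produce a smooth sequence $f_j\to f$ in $\HS$ and argues via Cauchy sequences, while you work directly with partial sums of the atomic decomposition (using that $(1,\infty)$-atoms are already Lipschitz); the key estimate and the appeals to Theorem~\ref{HSKP9331} are identical.
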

\begin{proof}
We have seen that the norms of $\HSt{t_1}$ and $\HSt{t_2}$ for
$1<t_1,t_2\leq \infty$ are equivalent. Thus every
$(1,\infty)$-atom can be approximated by smooth $(1,\infty)$-atoms
in $\HS$. Let $f=\sum_j\lambda_ja_j$, then choose smooth
$(1,\infty)$-atoms $a_j^N$ with $||a_j^N-a_j||_{\HS}\leq
\varepsilon\frac{1}{2^j}\frac{1}{\sum_j|\lambda_j|}$. Now choose
$N$ such that $\sum_{j>N}^{\infty}|\lambda_j|\leq \varepsilon$,
then for $f^N=\sum_{j=1}^N\lambda_ja_j^N$ we have
$||f-f_N||_{\HS}\leq \sum_{j=1}^N |\lambda_j| ||a_j^N-a_j||_{\HS}
+ \sum_{j>N}^{\infty}|\lambda_j|\leq 2\varepsilon$, i.e.
$f_N\rightarrow f$ in $\HS$ with $f_N$ smooth.

If follows that we can choose $f_j\in \HS\cap
C^{\infty}(\partial\Omega)$ converging to $f\in \HS$ in $\HS$
norm. Denote by $u_j$ the weak solution for the smooth boundary
data $f_j$. Then
$$||\tilN{(u_j-u_k)}||_{L^1(\dom)} + ||u_j-u_k||_{L^1(\Omega)}\rightarrow 0,$$
and so $\{u_j\}_j$ is a Cauchy sequence in $L^1(\Omega)$. Thus
there exists $u\in L^1(\Omega)$ such that $u_j\rightarrow u$ in
$L^1(\Omega)$. Using Cacciopoli's inequality in the interior we
see that for any compact $K\subset\Omega$ one has
$$||u_j-u_k||_{W^{1,2}(K)}\leq C_K ||u_j-u_k||_{L^1(K)}\rightarrow 0.$$
The uniqueness of limits implies that $u\in
W^{1,2}_{\loc}(\Omega)$ and that $u$ is a weak solution of the
equation $Lu=0$. Furthermore
\begin{align*}
& ||u||_{L^1(\Omega)}  =\lim_{j\rightarrow \infty} ||u_j||_{L^1(\Omega)}\leq C\lim_j ||f_j||_{\HS}\leq C||f||_{\HS},\\
& ||u-u_j||_{L^1(\Omega)} \leq C ||f-f_j||_{\HS}.
\end{align*}
By using the same $N_{\varepsilon}$-idea as before we get
\begin{align*}
& ||\tilN{u}||_{L^1(\partial \Omega)} \leq C||f||_{\HS},\\
& ||\tilN{(u-u_j)}||_{L^1(\partial \Omega)} \leq C||f-f_j||_{\HS}.
\end{align*}
Hence Theorem \ref{HSKP9331} implies that $u$ has a non-tangential
limit almost everywhere, which we will denote by
$u|_{\partial\Omega}$. It remains to check that $u|_{\partial
\Omega} = f$ almost everywhere. We know that
$$(u_j - u)^*(Q)\leq C \tilN{(u-u_j)}(Q) + C ||u_j-u||_{L^1(\Omega)}.$$
Therefore
\begin{align*}|\{|f-u|_{\partial \Omega}|>\alpha\}| &\leq |\{|f-f_j|>\textstyle\frac{\alpha}{3}\}| + |\{|f_j-u_j|>\frac{\alpha}{3}\}| + |\{|u_j-u|_{\partial\Omega}|>\frac{\alpha}{3}\}|\\
&\leq \textstyle\frac{C}{\alpha} ||f-f_j||_{L^1(\partial\Omega)} + |\{(u_j-u)^*\geq \frac{\alpha}{3}\}|\\
&\leq \textstyle\frac{C}{\alpha} \left(||f-f_j||_{L^1(\partial\Omega)} +||\tilN{(u_j - \nabla u)}||_{L^1(\partial\Omega)} + ||u_j - u||_{L^1(\Omega)}\right)\\
&\leq \textstyle\frac{C}{\alpha}||f-f_j||_{\HS}
\end{align*}
which implies the non-tangential convergence almost everywhere.
Uniqueness and the stated $(\nabla_T u)_{r_j}$ convergence follow
from Theorem \ref{HSKP9331}, which completes the proof.
\end{proof}

\subsection{$(R)_{\HS}$ implies $(D^*)_{\text{BMO}}$} In this
subsection we explore the relation between the $(R)_{\HS}$ and the
elliptic measure of the adjoint operator $L^*$.

Let us recall the definition of the elliptic measure. In
\cite{LSW63} it was proved that for every $g\in
C^0(\partial\Omega)$ there exists a unique $u\in
W^{1,2}_{loc}(\Omega)\cap C^0(\bar{\Omega})$ such that $L^*u=0$ in
$\Omega$ and $u=g$ on $\partial \Omega$. By the maximum principle
we have $||u||_{L^{\infty}(\Omega)}\leq
||g||_{L^{\infty}(\Omega)}$. Thus for every fixed $X\in\Omega$ the
map defined by
$$C^0(\partial \Omega)\ni g\mapsto u(X)$$
is a bounded linear functional on $C^0(\partial \Omega)$. The
Riesz Representation Theorem implies the existence of a unique
regular Borel measure $\omega^X$ such that
$$u(X)= \int_{\partial \Omega} g(Q)\diff \omega^X(Q).$$
We will write $\omega$ instead of $\omega^{X_0}$ if we speak about
a fixed $X_0$. The reverse H\"older class $B_q$, $q>1$, is defined
as the class of all non-negative functions $k\in L^1_{\loc}$ such
that
$$\left(\fint_Q k^q\right)^{\frac{1}{q}}\leq C\fint_Q k$$
for all cubes $Q$, where $\fint_Q k=\frac{1}{|Q|}\int_Q k$. Using
for example Lemma 1.4.2 in \cite{Ken94} one sees that
$$(D^*)_p \Leftrightarrow \omega\in B_{p'}(d\sigma).$$
By the result of \cite{DKP09} we also have
$$(D^*)_{BMO} \Leftrightarrow \omega\in A_\infty(d\sigma)=\bigcup_{p'>1}B_{p'}(d\sigma).$$

Let us recall a variant of the non-tangential maximal function
from \cite{KP93}. For any $h:\Omega\to \mathbb R$, $Q\in
\partial\Omega$ we consider
$S_{\varepsilon,R}(Q)=T_R(Q)\cap(\partial \Omega)_{\varepsilon R}$
and define
$$N^{\varepsilon}(h)(Q)=\sup_{X\in\Gamma(Q)} \left( \fint_{T_{\delta(X)}(\hat{X})\backslash S_{\varepsilon,\delta(X)}(\hat{X})}|\nabla h(Z)|^2\diff Z\right)^{\frac{1}{2}}.$$
\begin{lemma} For all $0<p<\infty$ there exists $C_1,C_2$ depending only on $\varepsilon$, $p$ and $\Omega$ such that
$$C_1||N^{\varepsilon}(h)||_{L^p(\partial \Omega)}\leq ||N(h)||_{L^p(\partial \Omega)}\leq C_2 ||N^{\varepsilon}(h)||_{L^p(\partial \Omega)}.$$
\end{lemma}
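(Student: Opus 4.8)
The plan is to deduce the lemma from two elementary pointwise comparisons together with the standard fact that, for an averaged non-tangential maximal function of the type appearing here, the $L^p(\partial\Omega)$ norm is — up to constants depending on the aperture, $p$ and $\Omega$ — independent of the cone aperture $\kappa>1$. Throughout write $N_\kappa$ and $N^{\varepsilon}_{\kappa}$ for the two maximal functions with the aperture made explicit. Both have the form $\sup_{X\in\Gamma_\kappa(Q)}(\fint_{U(X)}|\nabla h|^2)^{1/2}$, where $U(X)$ is a ``Whitney region'' attached to $X$: for $N_\kappa$ it is the ball $B_{\delta(X)/2}(X)$, and for $N^{\varepsilon}_{\kappa}$ it is $T_{\delta(X)}(\hat X)\setminus S_{\varepsilon,\delta(X)}(\hat X)$, i.e.\ the set of $Z$ with $|Z-\hat X|<\delta(X)$ and $\varepsilon\delta(X)\le\delta(Z)$; in both cases (assuming, as we may, that $\varepsilon$ is below a threshold depending on the Lipschitz character of $\Omega$) $U(X)$ has diameter $\approx\delta(X)$, stays at distance $\gtrsim_\varepsilon\delta(X)$ from $\partial\Omega$, and has measure $\approx_\varepsilon\delta(X)^n$. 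Only $X$ lying in the boundary charts $B_{8R_0}(Q_k)$ need be considered: the contribution of $X$ in the fixed deep interior $\Omega_{cR_0}$ consists of averages of $|\nabla h|^2$ over a bounded family of fixed-size interior Whitney balls and is the same up to constants for both maximal functions.

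First I would show $N_\kappa(h)(Q)\le C\,N^{\varepsilon}_{\kappa_1}(h)(Q)$ for an enlarged aperture $\kappa_1$. Given $X\in\Gamma_\kappa(Q)$, put $X'=A_{c\delta(X)}(\hat X)$ with $c$ a constant (depending only on the Lipschitz character) chosen so that $\hat{X'}=\hat X$, $\delta(X')\approx c\,\delta(X)$, $B_{\delta(X)/2}(X)\subset B_{\delta(X')}(\hat{X'})$, and $B_{\delta(X)/2}(X)\cap(\partial\Omega)_{\varepsilon\delta(X')}=\emptyset$; then $B_{\delta(X)/2}(X)\subset T_{\delta(X')}(\hat{X'})\setminus S_{\varepsilon,\delta(X')}(\hat{X'})$, and since the two sets have comparable measure, $\fint_{B_{\delta(X)/2}(X)}|\nabla h|^2\le C\fint_{T_{\delta(X')}\setminus S_{\varepsilon,\delta(X')}}|\nabla h|^2$. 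As $|X'-Q|\lesssim(c+1+\kappa)\delta(X)\approx\kappa_1\delta(X')$, one has $X'\in\Gamma_{\kappa_1}(Q)$, which gives the claim. Conversely, to get $N^{\varepsilon}_{\kappa}(h)(Q)\le C_\varepsilon\,N_{\kappa_2}(h)(Q)$, cover the Whitney region $T_{\delta(X)}(\hat X)\setminus S_{\varepsilon,\delta(X)}(\hat X)$ by $M=M(\varepsilon,\Omega)$ Whitney balls $B_{\delta(Z_i)/2}(Z_i)$ with $\delta(Z_i)\approx\varepsilon\delta(X)$ (possible since every point of the region has $\delta$ between $\varepsilon\delta(X)$ and $\approx\delta(X)$ and the region has diameter $\approx\delta(X)$); each such ball has measure at most that of the region, so $\fint_{T_{\delta(X)}\setminus S_{\varepsilon,\delta(X)}}|\nabla h|^2\le M\max_i\fint_{B_{\delta(Z_i)/2}(Z_i)}|\nabla h|^2$, and since $|Z_i-Q|\lesssim\kappa\delta(X)\lesssim(\kappa/\varepsilon)\delta(Z_i)$ we have $Z_i\in\Gamma_{\kappa_2}(Q)$ with $\kappa_2\approx\kappa/\varepsilon$.

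Combining the two pointwise inequalities with the aperture-change estimates $\|N^{\varepsilon}_{\kappa_1}(h)\|_{L^p(\partial\Omega)}\le C\|N^{\varepsilon}_{\kappa}(h)\|_{L^p(\partial\Omega)}$ and $\|N_{\kappa_2}(h)\|_{L^p(\partial\Omega)}\le C\|N_{\kappa}(h)\|_{L^p(\partial\Omega)}$ — valid for every $0<p<\infty$, with $C=C(\kappa,\kappa_i,p,\Omega)$ — yields both halves of the lemma with constants depending on $\varepsilon$, $p$ and $\Omega$. The only non-routine ingredient is the aperture-change statement; it is proved in the usual way by comparing the distribution functions $|\{N_{\kappa'}(h)>\lambda\}|$ for different apertures $\kappa'$ through the Whitney/Vitali ``shadow'' argument (the same device underlying the classical change-of-aperture theorem for non-tangential approach regions, cf.\ \cite{KP93}), and I expect this to be the main point to spell out, the two pointwise comparisons above being only Whitney-ball bookkeeping sensitive to the Lipschitz character of $\Omega$.
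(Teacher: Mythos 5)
Your proof is correct, and it simply spells out the argument the paper delegates to the reference (\cite{FS72}, Lemma~1, Section~7): the two pointwise Whitney-region comparisons at enlarged apertures reduce the lemma to the classical change-of-aperture theorem for non-tangential maximal functions, which is precisely the result being cited. You also tacitly fix the paper's notational slip of writing $|\nabla h|$ in the integrand defining $N^{\varepsilon}$ while writing $|h|$ in that defining $N$; your comparison correctly uses the same integrand on both sides, and your caveat that $\varepsilon$ must be small enough for the region $T_{\delta(X)}(\hat X)\setminus S_{\varepsilon,\delta(X)}(\hat X)$ to be non-degenerate is exactly the hypothesis implicit in the statement.
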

\begin{proof} As it is stated in \cite{KP93}, the proof can be found in \cite{FS72}, Lemma 1, Section 7.
\end{proof}
\begin{lemma}[Lemma 5.8 and Lemma 5.13 in \cite{KP93}]\label{58513} Let $0<R<\frac{1}{4} R'$ and $Q\in\partial\Omega$. Assume that $u$ is a non-negative weak solution, which vanishes on $\Delta_{R'}(Q)$, then there exists an $\varepsilon>0$ such that
$$\int_{T_R(Q)} |\nabla u|^2\leq C\int_{T_R(Q)\backslash S_{\varepsilon,R}(Q)} |\nabla u|^2.$$
Moreover for $X\in T_{\frac{1}{4}R'}(Q)$ and $\delta(X)=R$ we have
$$\frac{u(X)}{\delta(X)}\approx\left(\fint_{T_R(Q)\backslash S_{\varepsilon,R}(Q)}|\nabla u|^2\right)^{\frac{1}{2}}.$$
\end{lemma}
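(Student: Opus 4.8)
This is \cite[Lemmas 5.8 and 5.13]{KP93}; the route I would follow assembles it from the standard estimates for divergence form operators with bounded measurable coefficients on Lipschitz domains: Caccioppoli and the Poincar\'e inequality with vanishing trace, Moser's local boundedness, De~Giorgi--Nash boundary H\"older continuity, the interior Harnack inequality, and the Carleson/boundary Harnack estimates for non-negative solutions vanishing on a surface ball.

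For the first inequality I would split $\int_{T_R(Q)}|\nabla u|^2=\int_{T_R(Q)\setminus S_{\varepsilon,R}(Q)}|\nabla u|^2+\int_{S_{\varepsilon,R}(Q)}|\nabla u|^2$ and show that for $\varepsilon$ sufficiently small the strip term is at most $\tfrac12\int_{T_R(Q)}|\nabla u|^2$, which yields the inequality with $C=2$. The engine is a near-boundary energy decay estimate. Because the zero extension of $u$ across $\Delta_{R'}(Q)$ is a non-negative subsolution of $L$, Moser's estimate together with Poincar\'e with vanishing trace gives $\sup_{T_{R/2}(Q')}u\le CR^{(2-n)/2}\big(\int_{T_R(Q')}|\nabla u|^2\big)^{1/2}$, while the boundary H\"older estimate gives $\sup_{T_\rho(Q')}u\le C(\rho/R)^{\gamma}\sup_{T_{R/2}(Q')}u$ for some $\gamma=\gamma(\lambda,n,\Omega)>0$; combining these with Caccioppoli produces a decay of the form
\begin{equation*}
\int_{T_\rho(Q')}|\nabla u|^2\le C\Big(\frac{\rho}{R}\Big)^{n-2+2\gamma}\int_{T_R(Q')}|\nabla u|^2,\qquad Q'\in\Delta_{2R}(Q),\ \rho\le\tfrac{R}{2}.
\end{equation*}
I would then cover $\Delta_R(Q)$ by surface balls of radius $\varepsilon R$, sum this decay estimate over the corresponding Carleson boxes $T_{\varepsilon R}(Q')$, and absorb the strip term. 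This last covering/absorption step — making the bound genuinely small rather than merely comparable, with careful bookkeeping of the overlap of the boxes and using the boundary Harnack comparison to control the true rate of vanishing of $u$ — is the technically most demanding point, and is carried out in \cite{KP93}.

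For the two-sided bound $u(X)/\delta(X)\approx\big(\fint_{T_R(Q)\setminus S_{\varepsilon,R}(Q)}|\nabla u|^2\big)^{1/2}$, let $Q^{*}\in\partial\Omega$ be the point nearest to $X$, so $\delta(X)=|X-Q^{*}|=R$ and $Q^{*}\in\Delta_{R'/2}(Q)$. For the upper estimate I would use Moser on $B_{R/2}(X)$ and Poincar\'e with vanishing trace on a fixed enlargement $T_{cR}(Q^{*})$ to obtain $u(X)^2\le C\fint_{B_{R/2}(X)}u^2\le CR^{2-n}\int_{T_{cR}(Q^{*})}|\nabla u|^2$; the first part of the lemma removes the near-boundary strip, and a backward/boundary Harnack comparison transfers the average from $T_{cR}(Q^{*})$ to $T_R(Q)$. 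For the lower estimate I would run a Harnack chain from $X$ to the corkscrew point $A_R(Q)$ (interior Harnack), reducing matters to $u(A_R(Q))^2\ge cR^{2-n}\int_{T_R(Q)}|\nabla u|^2$, which follows from the first part of the lemma together with an energy lower bound obtained by testing $Lu=0$ against a cutoff supported in $T_R(Q)\setminus S_{\varepsilon,R}(Q)$.

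The crux — and the reason the Sobolev-space arguments of the preceding sections do not carry over — is the two-sided control of the boundary behaviour of $u$: the strip energy must be genuinely small (the upper bound, which really uses the boundary regularity of solutions, i.e.\ the equation), and $u(X)/\delta(X)$ must be comparable for all admissible $X$ and to the bulk gradient average (the lower bound, which uses the boundary Harnack principle and the Carleson estimate for non-negative solutions vanishing on a surface ball). Establishing these on an arbitrary Lipschitz domain for operators with only bounded measurable coefficients is exactly the work done in \cite{KP93}, and is where essentially all the difficulty lies.
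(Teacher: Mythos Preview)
The paper gives no proof of this lemma at all; it simply records the statement and cites \cite[Lemmas 5.8 and 5.13]{KP93}. Your sketch is therefore strictly more than what the paper does, and it correctly identifies the standard toolkit (Caccioppoli, Poincar\'e with zero trace, Moser, boundary De~Giorgi--Nash, interior and boundary Harnack, Carleson estimate) and, importantly, flags the genuinely delicate point: the naive boundary H\"older exponent $\gamma$ from De~Giorgi--Nash alone gives only $\int_{S_{\varepsilon,R}}|\nabla u|^2\lesssim \varepsilon^{2\gamma-1}\int_{T_R}|\nabla u|^2$ after the covering, which is useless unless $\gamma>\tfrac12$, so one really must invoke the boundary Harnack comparison of $u$ with the Green function to get the effectively linear rate of vanishing. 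One small wording issue: in your lower bound for $u(A_R(Q))$, what you call an ``energy lower bound obtained by testing'' is actually the Caccioppoli \emph{upper} bound $\int_{T_R}|\nabla u|^2\le CR^{-2}\int_{T_{2R}}u^2$ combined with the Carleson estimate $\sup_{T_{2R}}u\le Cu(A_R)$; the inequality then reads the right way. With that correction your outline matches the argument in \cite{KP93}.
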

\begin{theorem}\label{RpToDp}
$(R)_{\HS}$ implies $(D^*)_{\text{BMO}}$ (and also $(D^*)_p$ for
some $1<p<\infty$).
\end{theorem}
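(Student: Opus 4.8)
The plan is to establish the dual formulation of $(D^*)_{\text{BMO}}$ as a reverse-H\"older/$A_\infty$ condition on the adjoint elliptic measure $\omega$ by testing the regularity estimate (\ref{RHSATO}) against judiciously chosen Hardy-Sobolev atoms. Recall that $(D^*)_{\text{BMO}}\Leftrightarrow \omega\in A_\infty(d\sigma)$, and that it suffices to prove a single reverse-H\"older bound $\omega\in B_{q'}(d\sigma)$ for one $q'>1$, i.e. $\left(\fint_{\Delta} k^{q'}\right)^{1/q'}\le C\fint_\Delta k$ for all surface balls $\Delta=\Delta_R(Q)$, where $k=d\omega/d\sigma$. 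So I would fix such a $\Delta_R(Q_0)$ with $R$ small (the case of large $R$ being trivial by Harnack), and seek to dominate local $L^{q'}$-averages of $k$.

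The key mechanism is duality: for a test function $g\in C^0(\partial\Omega)$ supported in $\Delta$, $\int_{\Delta} g\, d\omega = v(X_0)$ where $L^* v=0$, $v|_{\partial\Omega}=g$; on the other hand, if $u$ solves $Lu=0$ with $u|_{\partial\Omega}=a$ for a Hardy-Sobolev atom $a$, then Green's-formula / integration-by-parts pairing of $u$ against $v$ (using that $L=\operatorname{div}A\nabla$, $L^*=\operatorname{div}A^T\nabla$) should produce a bilinear identity that lets me bound $\int g\,d\omega$ by the atomic norm $\|a\|_{\HS}\approx 1$ times a factor controlled by $N(\nabla u)\in L^1$. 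Concretely I would take, for a nonnegative density-like rearrangement, an atom $a$ adapted to $\Delta$ whose tangential gradient is essentially $\chi_\Delta$-type so that $\int_\Delta g\,d\omega$ gets compared to $R\cdot(\text{something involving }\|N(\nabla u)\|_{L^1})$. Running this over a dense family of such $g$ (e.g. $g = k^{q'-1}/\|k^{q'-1}\|$ suitably truncated and normalized, chosen to extract $\|k\|_{L^{q'}(\Delta)}$) and using Lemma~\ref{58513} and Lemma~\ref{HSKP9331} to translate $\nabla_T$-boundary behaviour of $u$ into interior square-function/non-tangential quantities, I expect the estimate $R^{1-n/q'}\|k\|_{L^{q'}(\Delta)}\lesssim \fint_\Delta k$ to drop out, which is exactly $\omega\in B_{q'}$ after accounting for normalizations. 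The exponent $q'>1$ will be the one coming from $q^*=\frac{sq}{s-q}$ in (\ref{6inBD09}) / Theorem~\ref{ContinuousAtoms}, i.e. the Gehring-type self-improvement built into $\HS$.

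The main obstacle I anticipate is the integration-by-parts step and the control of boundary terms: $u$ is only $W^{1,2}_{\loc}(\Omega)$ with $N(\nabla u)\in L^1(\partial\Omega)$, so pairing $\int_\Omega A\nabla u\cdot\nabla v$ against the adjoint solution $v$ requires care near $\partial\Omega$ — one must approximate on $\Omega_\beta$, use the non-tangential convergence and the weak$^*$ convergence of $(\nabla_T u)_{r_j}$ to $\nabla_T a$ from Theorem~\ref{HSKP9331}, and show the error terms on $\partial(\partial\Omega)_\beta$ vanish as $\beta\to 0$. This is where the quantitative estimates $\|N(\nabla u)\|_{L^1}\lesssim\|a\|_{\HS}$ and the Carleson-type control of $u$ itself do the work. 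A secondary subtlety is that atoms do not have cancellation $\int a=0$ (only $\int\nabla_T a=0$), so I must check the constant part of $a$ contributes harmlessly — it does, because the corresponding solution is essentially constant and pairs against $d\omega$ to give a harmless multiple of $\omega(\partial\Omega)$. Once the bilinear estimate $\bigl|\int_\Delta g\,d\omega\bigr|\lesssim \|a\|_{\HS}$ is in hand for the right family of atoms, extracting a single reverse-H\"older inequality, and hence $\omega\in A_\infty(d\sigma)$ and thus $(D^*)_{\text{BMO}}$ (and $(D^*)_p$ for some $p$), is routine.
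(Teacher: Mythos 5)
Your approach is genuinely different from the paper's, and it contains a gap at exactly the point you flag as ``the main obstacle.'' The paper does \emph{not} use any Green's-formula duality pairing; it argues directly. It picks the concrete annular bump $f$ with $f\equiv 0$ on $\Delta_R(Q_0)$, $f\equiv 1$ on $\Delta_{4R}\setminus\Delta_{2R}$, $\operatorname{supp} f\subset\Delta_{5R}$, observes that $R^{2-n}f$ is a $(1,\infty)$-atom so $\|f\|_{\HS}\lesssim R^{n-2}$, and then lets $u$ solve $Lu=0$ with data $f$. By the comparison principle and Lemma~2.2 of \cite{CFMS81}, $u(X)/G(X,0)\approx R^{n-2}/\omega(\Delta_R)$ near $Q_0$; by Lemma~\ref{58513}, $u(X)/\delta(X)\approx\bigl(\fint|\nabla u|^2\bigr)^{1/2}$; and by CFMS again, $G(X,0)/\delta(X)\approx\omega(\Delta_{\delta(X)}(\hat X))/\delta(X)^{n-1}$. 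Chaining these gives a pointwise bound of the ``radial'' maximal function $h(P)=\sup_{s<R/2}\omega(\Delta_s(P))/s^{n-1}$ by $\frac{\omega(\Delta_R)}{R^{n-2}}N^\varepsilon(\nabla u)(P)$. The $(R)_{\HS}$ hypothesis then yields $\int_{\Delta_{R/2}}h\,d\sigma\lesssim\omega(\Delta_R)/R^{n-1}$, which by Stein's $L\log L$ estimate $\|k\|_{L\log L}\lesssim\|M_\Delta k\|_{L^1}$ and R.~Fefferman's criterion \cite{Fef89} gives $\omega\in A_\infty(d\sigma)$. No bilinear pairing, no choice of $g=k^{q'-1}$.

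The problem with your duality sketch is structural, not merely one of missing details. Integrating $\int_\Omega A\nabla u\cdot\nabla v$ by parts (when it can be justified) produces a \emph{reciprocity identity between co-normal derivatives on the boundary}, not a relation between $\int g\,d\omega$ and $\|a\|_{\HS}$: the quantity $\int g\,d\omega = v(X_0)$ is an interior value of $v$, whereas the Green's formula boundary terms involve $\partial_{\nu_A}u$ and $\partial_{\nu_{A^T}}v$ paired against the traces, and $(R)_{\HS}$ gives you no control on $\partial_{\nu_{A^T}}v$ (which is Neumann, not Dirichlet, data for $v$). The usual route from such an identity to $\omega\in A_\infty$ would require representing $d\omega/d\sigma$ as $-\partial_{\nu_A}G(X_0,\cdot)$ and treating the Green function as a regularity-type solution, which is precisely the layer-potential machinery the paper explicitly cannot use (``our assumptions do not allow to use the method of layer potentials''). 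So the ``bilinear identity that lets me bound $\int g\,d\omega$'' does not exist in the form you posit. Separately, your heuristic that the reverse-H\"older exponent $q'>1$ should come from $q^*=sq/(s-q)$ in~(\ref{6inBD09}) is off: that $q^*$ governs the atomic decomposition of $\HS$, not the self-improvement of $\omega$. The paper sidesteps any attempt to name a specific $q'$ by proving the $L\log L\Rightarrow A_\infty$ criterion instead, and the resulting $B_{q'}$ exponent is whatever Gehring's lemma for $A_\infty$ weights supplies, unrelated to $q^*$.
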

\begin{proof}
We use the methods and ideas from the proof in \cite{KP93} and change them a bit to suit the $(R)_{\HS}$ condition. Let $\omega$ be the elliptic
measure for $L^*$. By \cite{DKP09} it suffices to prove that $\omega$ is absolutely continuous with respect to the surface measure and that
$\omega\in A_{\infty}(d\sigma)$.\\
Choose $R\leq \frac{1}{5}R_0$ and
$Q_0\in\partial\Omega$. Let $f\in C^{\infty}(\partial \Omega)$ be
non-negative with $0\leq f\leq 1$, $|\nabla f|\le \frac{C}{R}$ and
\begin{align*}
\begin{cases}
f\equiv 0 &\text{on }\Delta_R=\Delta_R(Q_0)\\
f\equiv 1 &\text{on }\Delta_{4R}\backslash \Delta_{2R}\\
f\equiv 0 &\text{on }\partial \Omega\backslash \Delta_{5R}.
\end{cases}
\end{align*}
Clearly, $||f||_{L^{\infty}(\partial \Omega)}\leq 1$ and $||\nabla
f||_{L^{\infty}(\partial\Omega)}\leq \frac{C}{R}$, thus
$\frac{C}{R^{n-2}}f$ is a Hardy-Sobolev $(1,\infty)$-atom. If
follows that $||f||_{\HS}\leq C R^{n-2}$.

Let $u$ be the weak solution with boundary data $f$. Then $C\leq
u(A_R(Q_0))\leq 1$. By the comparison principle and Lemma 2.2 in
\cite{CFMS81} we have for $X\in T_{R/2}(Q_0)$:
\begin{align*}
\frac{u(X)}{G(X,0)}\approx
\frac{u(A_R(Q_0))}{G(A_R(Q_0),0)}\approx
\frac{1}{G^*(0,A_R(Q_0))}\approx \frac{R^{n-2}}{\omega(\Delta_R)}.
\end{align*}
Lemma \ref{58513} and Lemma 2.2 in \cite{CFMS81} imply
$$\left(\fint_{T_{\delta(X)}(\hat{X})\backslash S_{\varepsilon,\delta(X)}(\hat{X})}|\nabla u|^2\right)^{\frac{1}{2}}
\approx \frac{u(X)}{\delta(X)}\approx
\frac{G(X,0)}{\delta(X)}\frac{R^{n-2}}{\omega(\Delta_R)}\approx
\frac{\omega(\Delta_{\delta(X)}(\hat{X}))}{\delta(X)^{n-1}}\frac{R^{n-2}}{\omega(\Delta_R)}$$
and so for $P=\hat{X}$ we have
\begin{align*}
\frac{\omega(\Delta_{\delta(X)}(P))}{\delta(X)^{n-1}}\approx
\frac{\omega(\Delta_R)}{R^{n-2}}
\left(\fint_{T_{\delta(X)}(\hat{X})\backslash
S_{\varepsilon,\delta(X)}}|\nabla
u|^2\right)^{\frac{1}{2}}\leq\frac{C\omega(\Delta_R)}{R^{n-2}}
N^{\varepsilon}(\nabla u)(P).
\end{align*}
Hence if we define
$h(P)=\sup_{0<s<\frac{R}{2}}\frac{\omega(\Delta_s(P))}{s^{n-1}}$,
the estimate above gives that $h(P)\leq
\frac{C\omega(\Delta_R)}{R^{n-2}}N^{\varepsilon}(\nabla u)(P)$. By
Lemma \ref{58513}, the assumption that $(R)_{\HS}$ holds and the
doubling property of $\omega$ we see that $\omega$ is absolutely
continuous with respect to $\diff\sigma$, i.e. $\omega=k\diff
\sigma$ for some $k\in L^1(\diff\sigma)$.

In order to show that $\omega\in A_{\infty}(\diff\sigma)$ it is
enough to show that $||\omega||_{L(log
L)(\diff\tilde{\sigma})}\leq C
||\omega||_{L^1(\diff\tilde{\sigma})}$ for all
$\diff\tilde{\sigma}= \frac{\chi_{\Delta}}{|\Delta|}\diff\sigma$
(see for example \cite{Fef89}), where we can assume without
loosing generality that $r(\Delta)\leq R_0$. We have by
\cite{Ste69} that
\begin{align*}
||k||_{L(logL)(\diff\tilde{\sigma})}\leq C ||M_{\Delta} k
||_{L^1(\diff\tilde{\sigma})},
\end{align*}
where $M_{\Delta}$ denotes the Hardy-Littlewood maximal function
over all balls contained in $\Delta$. By the doubling property of
$\omega$ we see that
\begin{align*}
||M_{\Delta}k||_{L^1(\diff\tilde{\sigma})} & \leq C \fint_{\Delta_{R/2}}h(P)\,d\sigma(P)\\
& \leq \frac{C\omega(\Delta_R)}{R^{n-2}}\fint_{\Delta_{R/2}}N^{\varepsilon}(\nabla u)(P)\,d\sigma\\
& \leq \frac{C\omega(\Delta_R)}{R^{n-2}} \frac{1}{R^{n-1}} R^{n-2}
= C||\omega||_{L^1(\diff\tilde{\sigma})},
\end{align*}
which concludes that $\omega\in A_{\infty}(d\sigma)$ proving our
claim.
\end{proof}

\subsection{A new proof for: $(R)_p$ implies $(R)_{\HS}$}

In \cite{KP93} C.E. Kenig and J. Pipher used localization argument
to prove the implication that $(R)_p$ implies $(R)_{\HS}$. In
order to prove the same result without the localization theorem of
\cite{KP93} we need the following:
\begin{lemma}[Lemma 2.5 in \cite{She07}]\label{NothingSpecial}
Let $u$ be a weak solution for $L$ in $\Omega$ which vanishes on $\Delta_{5R}(Q)$.
Then for any $X\in T_{2R}(Q)$ we have
$$|u(X)|\approx \frac{G(X,0)}{G(A_R(Q),0)} \left(\fint_{T_{4R}(Q)}|u|^2\right)^{\frac{1}{2}}.$$
\end{lemma}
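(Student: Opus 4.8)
The plan is to deduce this from the Carleson–Fabes–Mari\'a–Salsa / boundary Harnack machinery together with the comparison principle, exactly in the spirit of Lemma \ref{58513} and Lemma 2.2 in \cite{CFMS81}, but phrased in terms of the averaged quantity $\bigl(\fint_{T_{4R}(Q)}|u|^2\bigr)^{1/2}$ rather than the pointwise value at a corkscrew point. First I would reduce to the case of a \emph{non-negative} solution: since $u$ vanishes on $\Delta_{5R}(Q)$, both $u^+$ and $u^-$ are non-negative subsolutions vanishing on $\Delta_{5R}(Q)$, and by the maximum principle each is dominated on $T_{4R}(Q)$ by a non-negative solution vanishing on a slightly smaller portion of the boundary; alternatively, one runs the argument separately for the parts of $T_{4R}(Q)$ where $u>0$ and $u<0$. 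So assume $u\ge 0$.

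Next, the key point is the boundary Harnack / comparison principle of \cite{CFMS81}: for a non-negative solution $u$ vanishing on $\Delta_{5R}(Q)$ and for the Green's function $G(\cdot,0)$ (which also vanishes there, assuming $0\notin T_{5R}(Q)$, harmless after fixing the pole $X_0=0$ far from $Q$), one has on $T_{4R}(Q)$
\begin{align*}
\frac{u(X)}{G(X,0)}\approx \frac{u(A_R(Q))}{G(A_R(Q),0)},
\end{align*}
with constants independent of $X$ and $R$. Thus $|u(X)|\approx \dfrac{G(X,0)}{G(A_R(Q),0)}\,u(A_R(Q))$ for all $X\in T_{2R}(Q)$ (indeed on $T_{4R}(Q)$). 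It therefore remains to identify $u(A_R(Q))$, up to multiplicative constants, with $\bigl(\fint_{T_{4R}(Q)}|u|^2\bigr)^{1/2}$. The inequality $u(A_R(Q))\le C\bigl(\fint_{T_{4R}(Q)}|u|^2\bigr)^{1/2}$ is immediate from interior Harnack (or the sub-mean-value property) since $A_R(Q)$ is a corkscrew point with $B_{cR}(A_R(Q))\subset T_{4R}(Q)$. For the reverse inequality I would integrate the comparison estimate: $\fint_{T_{4R}(Q)}|u|^2\approx \bigl(\tfrac{u(A_R(Q))}{G(A_R(Q),0)}\bigr)^2\fint_{T_{4R}(Q)}G(X,0)^2\,dX$, so it suffices to show $\fint_{T_{4R}(Q)}G(X,0)^2\,dX\lesssim G(A_R(Q),0)^2$. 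This last estimate follows again from the comparison principle applied to $G(\cdot,0)$ itself (it is a non-negative solution vanishing on $\Delta_{5R}(Q)$) together with Lemma 2.2 in \cite{CFMS81}, which controls $G(X,0)$ by $G(A_{\delta(X)}(\hat X),0)\lesssim G(A_R(Q),0)$ via Harnack chains inside $T_{4R}(Q)$, and a routine summation over dyadic shells $\delta(X)\approx 2^{-j}R$ whose volumes are $\lesssim 2^{-j}R^n$.

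The main obstacle is the direction $\bigl(\fint_{T_{4R}(Q)}|u|^2\bigr)^{1/2}\lesssim u(A_R(Q))$, i.e. the volume bound $\fint_{T_{4R}(Q)}G(X,0)^2\,dX\lesssim G(A_R(Q),0)^2$: one must show the $L^2$-average of $G(\cdot,0)$ over the whole Carleson box — including points arbitrarily close to $\Delta_{5R}(Q)$ where $G$ decays — is still comparable to its value at the single corkscrew point. This is handled precisely by the decay rate $G(X,0)\approx \omega(\Delta_{\delta(X)}(\hat X))/\delta(X)^{n-2}$ of \cite{CFMS81} combined with the doubling property of $\omega$, which makes the dyadic-shell sum converge; no new ideas beyond \cite{CFMS81} and \cite{KP93} are needed, only their careful combination, and the whole argument is insensitive to symmetry of $A$, so it applies verbatim to both $L$ and $L^*$.
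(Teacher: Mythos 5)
Your sketch invokes the right machinery (the CFMS comparison/boundary Harnack principle of \cite{CFMS81}, the Carleson estimate, interior De Giorgi--Nash--Moser bounds), but two of the claimed steps do not actually hold and are not merely a matter of adjusting constants.

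First, the two-sided estimate $\approx$ cannot hold for a sign-changing solution: at any zero of $u$ inside $T_{2R}(Q)$ the left-hand side vanishes while the right-hand side is strictly positive. Your proposed reduction to $u\ge 0$ --- dominating $u^{\pm}$ by non-negative solutions, or "running the argument separately on $\{u>0\}$ and $\{u<0\}$" --- only yields the $\lesssim$ direction; the $\gtrsim$ direction is simply false for sign-changing $u$, and no reduction can recover it. In fact only the $\lesssim$ direction is what the paper uses (see the proof of Lemma \ref{ReversePartI}), and the correct majorization for that direction is to replace $|u|$ on $T_{4R}(Q)$ by the non-negative solution $w$ with boundary data $|u|$ on $\partial T_{4R}(Q)\cap\Omega$ and $0$ on $\Delta_{4R}(Q)$, so that $|u|\le w$ by the maximum principle, and then prove the estimate for $w$. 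Your $u^{+},u^{-}$ suggestion is not wrong in spirit but it does not reduce the stated $\approx$ to the non-negative case.

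Second, you assert that the CFMS comparison $\frac{u(X)}{G(X,0)}\approx\frac{u(A_R(Q))}{G(A_R(Q),0)}$ holds on all of $T_{4R}(Q)$. For two non-negative solutions vanishing on $\Delta_{5R}(Q)$ the comparison theorem gives control of the ratio on a definite dilate such as $T_{5R/2}(Q)$, not on the full $T_{4R}(Q)$, which is too close to the edge of the vanishing region. Consequently the key step where you integrate the pointwise comparison to get $\fint_{T_{4R}(Q)}|u|^2\approx\left(\frac{u(A_R(Q))}{G(A_R(Q),0)}\right)^2\fint_{T_{4R}(Q)}G^2$ is not justified as written, and the dyadic-shell argument via $G(X,0)\approx\omega(\Delta_{\delta(X)}(\hat X))/\delta(X)^{n-2}$ is built on that unjustified identity. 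What you actually need for the lower bound (when $u\ge 0$) is the cruder and more robust inequality $\sup_{T_{4R}(Q)} u\le C\,u(A_R(Q))$, which follows by applying the \emph{one-sided} Carleson estimate locally in small boxes $T_{cR}(\hat X)$ for $X\in T_{4R}(Q)$ with $\delta(X)$ small (possible because $\hat X$ is still well inside $\Delta_{5R}(Q)$) together with Harnack chaining to the corkscrew point; no two-sided comparison on $T_{4R}(Q)$ and no summation over shells is required. With these two repairs --- majorization for general $u$ and Carleson/Harnack in place of comparison on the full box --- the architecture you outlined does give the $\lesssim$ direction, which is the content actually needed from Shen's Lemma 2.5.
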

The next Lemma is part of the proof of Theorem 2.9 in
\cite{She07}:
\begin{lemma}\label{ReversePartI} Assume that $\omega\in A_{\infty}(d\sigma)$. Then for $u$ and $R$ as in Lemma \ref{NothingSpecial} we get
$$\left(\fint_{\Delta_R(Q)}\left(\frac{u}{\delta}\right)_R^*\right)\leq \frac{C}{R}\left(\fint_{T_{4R}(Q)} |u|^2\right)^{\frac{1}{2}}.$$
\end{lemma}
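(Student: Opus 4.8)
The idea is to run the estimate from the proof of Theorem~\ref{RpToDp} ``in reverse'': we convert the pointwise size information on $u$ furnished by Lemma~\ref{NothingSpecial} into a pointwise bound for $(u/\delta)^*_R$ in terms of a maximal function of the density of $\omega$, and then average over $\Delta_R(Q)$ using that $\omega\in A_\infty(d\sigma)$ is a reverse Hölder weight.

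First I would fix $P\in\Delta_R(Q)$ and $X\in\Gamma(P)\cap B_R(P)$. Since $|X-Q|\le|X-P|+|P-Q|<2R$ we have $X\in T_{2R}(Q)$, so Lemma~\ref{NothingSpecial} applies and gives
$$\frac{|u(X)|}{\delta(X)}\approx\frac{1}{G(A_R(Q),0)}\,\frac{G(X,0)}{\delta(X)}\left(\fint_{T_{4R}(Q)}|u|^2\right)^{1/2}.$$
Next, exactly as in the proof of Theorem~\ref{RpToDp}, Lemma 2.2 in \cite{CFMS81} lets me replace the Green function by the elliptic measure: $G(A_R(Q),0)\approx\omega(\Delta_R(Q))/R^{n-2}$ and $G(X,0)/\delta(X)\approx\omega(\Delta_{\delta(X)}(\hat{X}))/\delta(X)^{n-1}$. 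As $|\hat{X}-P|\le C\delta(X)$, the doubling property of $\omega$ yields $\omega(\Delta_{\delta(X)}(\hat{X}))\approx\omega(\Delta_{\delta(X)}(P))$, and since $\delta(X)\le|X-P|<R$, taking the supremum over all admissible $X$ gives
$$\left(\frac{u}{\delta}\right)^*_R(P)\le\frac{CR^{n-2}}{\omega(\Delta_R(Q))}\left(\fint_{T_{4R}(Q)}|u|^2\right)^{1/2}h(P),\qquad h(P):=\sup_{0<s<R}\frac{\omega(\Delta_s(P))}{s^{n-1}},$$
which is essentially the auxiliary function $h$ from the proof of Theorem~\ref{RpToDp}.

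It then remains to prove $\fint_{\Delta_R(Q)}h\,d\sigma\le C\,\omega(\Delta_R(Q))/\sigma(\Delta_R(Q))$; inserting this into the previous display and using $\sigma(\Delta_R(Q))\approx R^{n-1}$ produces exactly the claimed bound. Writing $\omega=k\,d\sigma$ and using $\sigma(\Delta_s(P))\approx s^{n-1}$ we have $h(P)\approx\sup_{0<s<R}\fint_{\Delta_s(P)}k\,d\sigma$, and for $P\in\Delta_R(Q)$ all these surface balls lie in $\Delta_{2R}(Q)$, so $h(P)\le C\,M(k\chi_{\Delta_{2R}(Q)})(P)$, where $M$ is the uncentered Hardy--Littlewood maximal operator on $\partial\Omega$. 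Since $\omega\in A_\infty(d\sigma)=\bigcup_{q>1}B_q(d\sigma)$, fix $q>1$ with $k\in B_q(d\sigma)$; then by Hölder's inequality, the $L^q(d\sigma)$ boundedness of $M$ (surface measure is doubling), the reverse Hölder inequality on $\Delta_{2R}(Q)$, and the doubling property of $\omega$,
$$\fint_{\Delta_R}h\,d\sigma\le C\left(\fint_{\Delta_R}\big(M(k\chi_{\Delta_{2R}})\big)^q\,d\sigma\right)^{1/q}\le C\left(\fint_{\Delta_{2R}}k^q\,d\sigma\right)^{1/q}\le C\fint_{\Delta_{2R}}k\,d\sigma\le C\fint_{\Delta_R}k\,d\sigma=\frac{C\,\omega(\Delta_R)}{\sigma(\Delta_R)}.$$

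I expect the only genuine difficulty to be the bookkeeping rather than any single hard estimate: one must check that the entire truncated cone $\Gamma(P)\cap B_R(P)$ over $P\in\Delta_R(Q)$ stays in the region $T_{2R}(Q)$ where Lemma~\ref{NothingSpecial} is available, that replacing $\hat{X}$ by $P$ costs only doubling constants, and that the localized supremum over $s<R$ is dominated by the maximal function of $k$ on a fixed dilate; after that one invokes the endpoint reverse-Hölder/maximal-function inequality, which is the precise point where the hypothesis $\omega\in A_\infty(d\sigma)$ enters.
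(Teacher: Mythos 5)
Your argument is correct and follows the same route as the paper: use Lemma~\ref{NothingSpecial} together with the CFMS/GW comparison to bound $(u/\delta)^*_R(P)$ by $\frac{R^{n-2}}{\omega(\Delta_R)}\bigl(\fint_{T_{4R}}|u|^2\bigr)^{1/2}h(P)$, and then show $\fint_{\Delta_R}h\lesssim \omega(\Delta_R)/\sigma(\Delta_R)$. The paper compresses this last averaging inequality into the remark ``we used the $A_\infty(d\sigma)$ condition''; you correctly supply the missing detail (dominate $h$ by $M(k\chi_{\Delta_{2R}})$, apply Hölder and $L^q$ boundedness of $M$, then the $B_q$ reverse Hölder inequality and doubling), which is exactly what that remark is standing in for.
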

\begin{proof}
By Lemma \ref{NothingSpecial} we have for any $P\in \Delta_R(Q)$
$$ \left(\frac{u}{\delta}\right)^*_R(P)\leq C \frac{1}{G(A_R,0)}
\left(\fint_{T_{4R}(Q)}|u|^2\right)^{\frac{1}{2}}\left(\frac{G(\cdot,0)}{\delta(\cdot)}\right)^*_R(P).$$
Lemma 2.2 in \cite{CFMS81} and (1.3) Theorem in \cite{GW82} imply
$$\frac{G(X,0)}{\delta(X)} \approx \frac{\omega(\Delta_{\delta(X)}(\hat{X}))}{\delta(X)^{n-1}}.$$
Thus for $h_R(Q)=\sup_{X\in\Gamma(Q)\cap
B_R(Q)}\frac{\omega(\Delta_{\delta(X)}(\hat{X}))}{\delta(X)^{n-1}}$
we get
\begin{align*}
\left(\fint_{\Delta_R(Q)}\left(\frac{u}{\delta}\right)^*\right) &\leq \frac{C}{G(A_R(Q),0)}
\left(\fint_{T_{4R}(Q)} |u|^2\right)^{\frac{1}{2}} \left( \fint_{\Delta_R(Q)} h_R\right)\\
&\leq \frac{C}{G(A_R(Q),0)} \left( \fint_{T_{4R}(Q)}
|u|^2\right)^{\frac{1}{2}} \frac{\omega(\Delta_R(Q))}{R^{n-1}},
\end{align*}
where for the last step we used the $A_{\infty}(d\sigma)$
condition. Thus
$$\left(\fint_{\Delta_R(Q)} \left(\frac{u}{\delta}\right)^*\right)\leq \frac{C}{R}\left(\fint_{T_{4R}(Q)} |u|^2\right)^{\frac{1}{2}}.$$
\end{proof}

The result below takes care of the estimate for non-tangential
maximal function away from the support of an $(1,\infty)$-atom.

\begin{theorem}\label{ZeroPart} Assume that $\omega\in A_{\infty}(d\sigma)$, where $\omega$ is the elliptic measure of the operator $L^*$.
Let $f$ be a smooth Hardy-Sobolev $(1,\infty)$-atom
corresponding to the surface ball $\Delta_R(Q_0)$. Then $u$ the
weak solution for $L$ with boundary data $f$ satisfies
$$||\tilN{u}||_{L^1(\partial\Omega\backslash\Delta_{8R}(Q_0))}\leq C$$
for a constant $C$ independent of $f$ and $R$.
\end{theorem}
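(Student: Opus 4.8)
The plan is to establish the pointwise decay
$$\tilN{u}(Q)\ \le\ \frac{C\,R}{|Q-Q_0|^{\,n}}\qquad\text{for every }Q\in\dom\setminus\Delta_{8R}(Q_0),$$
with $C$ depending only on $\Omega$ and the ellipticity constant, and then to sum over the dyadic annuli $A_j=\Delta_{2^{j+1}R}(Q_0)\setminus\Delta_{2^jR}(Q_0)$, $j\ge 3$ (of which only finitely many are nonempty, as $\dom$ is bounded). Since $\sigma(A_j)\lesssim(2^jR)^{n-1}$, this gives
$$\|\tilN{u}\|_{L^1(\dom\setminus\Delta_{8R}(Q_0))}\ \le\ C\,R\sum_{j\ge 3}\frac{(2^jR)^{n-1}}{(2^jR)^{\,n}}\ =\ C\sum_{j\ge 3}2^{-j}\ \le\ C,$$
uniformly in $f$ and $R$. (We may assume $\Delta_R(Q_0)\subsetneq\dom$, otherwise $\dom\setminus\Delta_{8R}(Q_0)=\emptyset$ and there is nothing to prove.)

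Two inputs drive the pointwise bound. First, although the definition of a $(1,\infty)$-atom only records $\|f\|_{L^\infty}\le|\Delta_R(Q_0)|^{-1}$, the continuity of $f$ and the fact that it is compactly supported in the surface ball $\Delta_R(Q_0)$ force $f$ to vanish near $\partial\Delta_R(Q_0)$; integrating $\nabla_T f$ along boundary geodesics of length $\lesssim R$ and using $|\Delta_R(Q_0)|\approx R^{n-1}$ yields the sharper bound $\|f\|_{L^\infty(\dom)}\le C\,R\,\|\nabla_T f\|_{L^\infty}\le C\,R^{2-n}$. This ``Sobolev gain'' over the crude bound $|\Delta_R(Q_0)|^{-1}$ is exactly what makes the final estimate scale invariant. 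Writing $u(X)=\int_{\dom}f\,d\omega_L^X$ for the $L$-elliptic measure $\omega_L^X$ (so that $v(X)=\int_{\dom}g\,d\omega_L^X$ whenever $Lv=0$ with $v|_{\dom}=g$), we get
$$|u(X)|\ \le\ \|f\|_{L^\infty}\,\omega_L^X(\Delta_R(Q_0))\ \le\ C\,R^{2-n}\,\omega_L^X(\Delta_R(Q_0)),\qquad X\in\Omega.$$
Second, a standard CFMS argument (Lemma~2.2 in \cite{CFMS81}, together with the boundary Harnack principle for $G^*(A_R(Q_0),\cdot)=G(\cdot,A_R(Q_0))$ near $Q_0$ and the bound $G(Z,W)\le C|Z-W|^{2-n}$) gives, for $|X-Q_0|\gtrsim R$,
$$\omega_L^X(\Delta_R(Q_0))\ \le\ C\,\frac{R^{n-1}\,\delta(X)}{|X-Q_0|^{\,n}},$$
which encodes that $Y\mapsto\omega_L^Y(\Delta_R(Q_0))$ is a nonnegative $L$-solution, bounded by $1$, vanishing continuously on $\dom\setminus\overline{\Delta_{2R}(Q_0)}$ and comparable to $1$ near $\Delta_R(Q_0)$.

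Now fix $Q$ with $D:=|Q-Q_0|\ge 8R$ and $X\in\Gamma(Q)$. Caccioppoli's inequality on the interior ball $B_{3\delta(X)/4}(X)\subset\Omega$, followed by Harnack's inequality for the nonnegative $L$-harmonic function $\omega_L^{\,\cdot}(\Delta_R(Q_0))$ (chaining along $[X,Y]$, which stays at height $\ge\delta(X)/4$), gives
$$\Big(\fint_{B_{\delta(X)/2}(X)}|\nabla u|^2\Big)^{1/2}\ \le\ \frac{C}{\delta(X)}\sup_{B_{3\delta(X)/4}(X)}|u|\ \le\ \frac{C}{\delta(X)}\,\|f\|_{L^\infty}\,\omega_L^X(\Delta_R(Q_0)).$$
Since $X\in\Gamma(Q)$ forces $|X-Q_0|\gtrsim D$, the decay estimate of the previous step applies and, combined with $\|f\|_{L^\infty}\lesssim R^{2-n}$ and $R^{2-n}R^{n-1}=R$, yields
$$\Big(\fint_{B_{\delta(X)/2}(X)}|\nabla u|^2\Big)^{1/2}\ \le\ \frac{C}{\delta(X)}\cdot R^{2-n}\cdot\frac{R^{n-1}\,\delta(X)}{|X-Q_0|^{\,n}}\ =\ \frac{C\,R}{|X-Q_0|^{\,n}}\ \le\ \frac{C\,R}{D^{\,n}},$$
and taking the supremum over $X\in\Gamma(Q)$ proves the claimed pointwise decay of $\tilN u$. (Alternatively, the contribution of the bounded-height part of the cones to $\|\tilN u\|_{L^1(A_j)}$ can be controlled by covering $A_j$ with $O(1)$ surface balls $\Delta_{2^{j-4}R}(Q_i)$ on which $u\equiv 0$, applying Lemma~\ref{ReversePartI} on each, and bounding the $L^2$-average of $u$ over $T_{2^{j-2}R}(Q_i)$ by a boundary maximum-principle argument together with the above estimate for $\omega_L^{\,\cdot}(\Delta_R(Q_0))$; the remaining high part of the cones is treated exactly as above and contributes $\lesssim R$.)

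The main obstacle is the decay estimate for $\omega_L^X(\Delta_R(Q_0))$ in the second step: extracting the factor $\delta(X)\,|X-Q_0|^{-n}$ with constants independent of $R$ requires combining the CFMS comparison principle with the boundary Harnack principle applied to the Green's functions of \emph{both} $L$ and $L^*$. A secondary point of care is the geometry of the high part of the non-tangential cone: there one must use that $B_{\delta(X)/2}(X)$ is an interior ball, so that each of its points lies at height $\gtrsim\delta(X)$, hence at distance $\gtrsim\delta(X)\gtrsim D$ from $Q_0$, which keeps $\omega_L^{\,\cdot}(\Delta_R(Q_0))$ small there as well.
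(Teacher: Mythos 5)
Your main argument contains a genuine gap, and it is exactly the step you flagged as the ``main obstacle.'' The decay estimate
$$\omega_L^X(\Delta_R(Q_0))\ \le\ C\,\frac{R^{n-1}\,\delta(X)}{|X-Q_0|^{\,n}}$$
is a Lipschitz bound at the boundary: it asserts $\omega_L^X(\Delta_R(Q_0))\lesssim\delta(X)$ for $X$ near $\partial\Omega\setminus\Delta_{2R}(Q_0)$. For divergence-form operators with merely bounded measurable coefficients on a Lipschitz domain, the De Giorgi--Nash--Moser theory and the Gr\"uter--Widman estimates give only H\"older decay, $\omega_L^X(\Delta_R(Q_0))\lesssim\delta(X)^\alpha\cdot(\text{scale factors})$ for some $\alpha\in(0,1)$ depending on $\Omega$ and the ellipticity; the exponent $\alpha=1$ requires regularity (e.g.\ Dini or VMO coefficients, or special domains) that is not assumed here. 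With the correct exponent, the Caccioppoli step produces
$$\Big(\fint_{B_{\delta(X)/2}(X)}|\nabla u|^2\Big)^{1/2}\ \lesssim\ \frac{1}{\delta(X)}\cdot R^{2-n}\cdot\frac{R^{n-2+\alpha}\,\delta(X)^\alpha}{|X-Q_0|^{\,n-2+2\alpha}}\ =\ \frac{C\,R^\alpha}{\delta(X)^{1-\alpha}\,|X-Q_0|^{\,n-2+2\alpha}},$$
which blows up as $X\in\Gamma(Q)$ approaches $\partial\Omega$. So the claimed pointwise bound $\tilN{u}(Q)\le CR|Q-Q_0|^{-n}$ is not attainable in this generality, and the entire first route collapses. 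A symptom of this is that your main proof never uses the hypothesis $\omega\in A_\infty(d\sigma)$, which cannot be dispensed with.

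The paper handles precisely this difficulty by splitting the cone $\Gamma(Q)$ at height $\approx|Q-Q_0|$. On the high part ($\delta(X)\gtrsim|Q-Q_0|$) the Caccioppoli factor $1/\delta(X)$ is harmless and the Gr\"uter--Widman H\"older decay $G(X,A_R(Q_0))\lesssim R^\alpha|X-Q_0|^{2-n-\alpha}$ is enough; this is the only part where a pointwise estimate is used, and it yields $\int_{R_j}N_{2^jR}(\nabla u)\,d\sigma\lesssim 2^{-j\alpha}$ (not $2^{-j}$), which still sums. On the low part, instead of a pointwise bound one estimates the truncated nontangential maximal function in \emph{average}: Caccioppoli reduces it to $(u/\delta)^*_{R}$, and Lemma~\ref{ReversePartI} (which is where $\omega\in A_\infty$ enters) controls $\fint(u/\delta)^*$ by an interior $L^2$ average of $u$, to which the Gr\"uter--Widman decay again applies. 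Your parenthetical ``alternative'' is essentially this argument, and it is the correct one; it should be the proof, not a footnote. Note that in that alternative you should be careful not to invoke ``the above estimate for $\omega_L^{\cdot}(\Delta_R(Q_0))$'' --- only the interior decay $\omega^{X}(\Delta_R(Q_0))\lesssim R^{n-2+\alpha}|X-Q_0|^{2-n-\alpha}$ is needed and available, and it is applied at points $X$ with $\delta(X)\approx|X-Q_0|$, where the missing boundary regularity is irrelevant. The observations you do get right are the Poincar\'e gain $\|f\|_{L^\infty}\le CR^{2-n}$ (the paper uses the same bound) and the dyadic summation over annuli; these carry over unchanged once the low-part estimate is done via Lemma~\ref{ReversePartI}.
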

\begin{proof}
Without loosing generality we can assume that $R\leq R_0$ and that
$f$ is non-negative. Since $f$ is a smooth Hardy-Sobolev
$(1,\infty)$-atom for $\Delta_R(Q)$ we have $|f|\leq
\frac{C}{R^{n-2}}$. Thus for $X\in \Omega\backslash T_{2R}(Q)$
Lemma 2.2 in \cite{CFMS81} and Theorem 1.8 in \cite{GW82} imply
\begin{align}\label{EssentialDecay}
u(X)\leq C R^{2-n} w^X(\Delta_R(Q_0))\approx G(X,A_R(Q_0))\leq
C\frac{R^{\alpha}}{|X-Q_0|^{n+\alpha -2}}.
\end{align}
Define $R_j=\{ Q\in\partial\Omega:|Q-Q_0|\approx 2^jR\}$ for
$j\geq 3$. For $Q\in R_j$ and $X\in \Gamma(Q)$ with $|X-Q|\geq
2^jR$ we have by (\ref{EssentialDecay}) and Cacciopoli's
inequality
$$\left(\fint_{B_{\frac{\delta(X)}{2}}(X)}|\nabla u|^2\right)^{\frac{1}{2}}\leq \frac{C}{\delta(X)} \left(\fint_{B_{\frac{\delta(X)}{2}}(X)} u^2\right)^{\frac{1}{2}}\leq \frac{C}{\delta(X)}\frac{R^{\alpha}}{(2^jR)^{n+\alpha -2}}\leq\frac{C}{2^{j\alpha}}\frac{1}{(2^jR)^{n-1}}.$$
Therefore
$$\int_{R_j}\tilN{u}(Q)\,d\sigma(Q)\leq \int_{R_j}N_{2^jR}(\nabla u)(Q)\, d\sigma (Q)+ \frac{C}{2^{j\alpha}},$$
where $N_{2^jR}$ is as before the truncated non-tangential maximal
function at the height $2^jR$. By Cacciopoli inequality in the
interior we get
$$\int_{R_j}N_{2^jR}(\nabla u)(Q)\,d\sigma (Q)\leq C \int_{R_j}\left(\frac{u}{\delta}\right)^*(Q)\,d\sigma(Q).$$
Thus if we cover $R_j$ with finite many balls $\Delta_{\alpha}^j$
with radius comparable to $2^jR$ and apply Lemma
\ref{ReversePartI} to each of the balls we get
$$\int_{R_j}N_{2^jR}(\nabla u)(Q)\,d\sigma(Q)\leq C (2^jR)^{n-1}\sum_{\alpha}\fint_{\Delta_{\alpha}^j}
\left(\frac{u}{\delta}\right)^* \leq
\frac{C(2^jR)^{n-1}}{2^jR}\sum_{\alpha}
\left(\fint_{T_{\Delta_{\alpha}^j}} u^2\right)^{\frac{1}{2}},$$
where $T_{\Delta_{\alpha}^j}=T_{r_{\alpha}^j}(Q_{\alpha}^j)$ for
$r_{\alpha}^j=r(\Delta_{\alpha}^j)$ and $Q_{\alpha}^j$ the center
of $\Delta_{\alpha}^j$. Inequality (\ref{EssentialDecay}) implies
that each term is bounded by
$\frac{R^{\alpha}}{(2^jR)^{n+\alpha-2}}$, thus
$$\int_{R_j}N_{2^jR}(\nabla u)(Q)\,d\sigma(Q) \leq C (2^jR)^{n-2}\frac{R^{\alpha}}{(2^jR)^{n+\alpha - 2}}\leq \frac{C}{2^{j\alpha}}.$$
Therefore $\int_{R_j}\tilN{u}\,d\sigma\leq \frac{C}{2^{j\alpha}},$
which means that we can take the sum in $j$ to get
$$\int_{\partial\Omega\backslash \Delta_{8R}(Q)}\tilN{u}\,d\sigma\leq C.$$
\end{proof}
Thanks to Theorem \ref{ZeroPart} we now can reprove Theorem 5.2 of
\cite{KP93}.
\begin{theorem}\label{RpToRHS}
$(R)_p$ implies $(R)_{\HS}$.
\end{theorem}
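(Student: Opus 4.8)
The plan is to reduce, via Lemma~\ref{RpSmoothEnough}, to verifying (\ref{RHSATO}) for smooth Hardy--Sobolev $(1,\infty)$-atoms, and then to glue together two localized estimates: Theorem~\ref{ZeroPart} handles $\tilN{u}$ on the part of $\partial\Omega$ far from the atom's support, while the $(R)_p$ hypothesis, applied at the scale of the support, handles everything nearby. So fix a smooth $(1,\infty)$-atom $f$ associated to a surface ball $\Delta_R(Q_0)$ with $R\le R_0$, let $u$ be the corresponding weak solution, and set $p'=p/(p-1)$. The first step is to record that $(R)_p$ forces the elliptic measure $\omega$ of $L^*$ to satisfy $\omega\in B_{p'}(d\sigma)\subset A_\infty(d\sigma)$: this follows either from the classical one-sided duality $(R)_p\Rightarrow(D^*)_{p'}$, or — self-containedly — by repeating the argument of Theorem~\ref{RpToDp} with $L^p$-norms in place of $L^1$-norms (the bump function $f$ there has $\|f\|_{H^{1,p}(\partial\Omega)}\approx R^{(n-1)/p-1}$, so $(R)_p$ gives $\fint_{\Delta_{R/2}}N^{\varepsilon}(\nabla u)\le CR^{-1}$, and the rest of that proof goes through verbatim). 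This puts Theorem~\ref{ZeroPart} and Lemma~\ref{ReversePartI} at our disposal.

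Next I would split $\partial\Omega=\Delta_{8R}(Q_0)\cup(\partial\Omega\setminus\Delta_{8R}(Q_0))$. On the complement Theorem~\ref{ZeroPart} gives $||\tilN{u}||_{L^1(\partial\Omega\setminus\Delta_{8R}(Q_0))}\le C$ immediately. On $\Delta_{8R}(Q_0)$, which has surface measure $\approx R^{n-1}$, H\"older's inequality and then $(R)_p$ give
$$||\tilN{u}||_{L^1(\Delta_{8R}(Q_0))}\le CR^{(n-1)/p'}\,||\tilN{u}||_{L^p(\partial\Omega)}\le CR^{(n-1)/p'}\,||f||_{H^{1,p}(\partial\Omega)}.$$
Since $f$ is supported in $\Delta_R(Q_0)$ with $||f||_{L^\infty}+||\nabla_T f||_{L^\infty}\le C|\Delta_R(Q_0)|^{-1}\approx R^{-(n-1)}$, one has $||f||_{H^{1,p}(\partial\Omega)}\le CR^{(n-1)/p}R^{-(n-1)}=CR^{-(n-1)/p'}$; the two powers of $R$ cancel, leaving $||\tilN{u}||_{L^1(\Delta_{8R}(Q_0))}\le C$.

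For the solid term I would split $\Omega=T_{8R}(Q_0)\cup(\Omega\setminus T_{8R}(Q_0))$. On $T_{8R}(Q_0)$, of volume $\approx R^n$, H\"older together with the bound $||u||_{L^p(\Omega)}\le C||f||_{H^{1,p}(\partial\Omega)}\le CR^{-(n-1)/p'}$ supplied by $(R)_p$ gives $||u||_{L^1(T_{8R}(Q_0))}\le CR^{n/p'}R^{-(n-1)/p'}=CR^{1/p'}\le C$. On $\Omega\setminus T_{8R}(Q_0)$ I would use the pointwise decay (\ref{EssentialDecay}), which holds for $|u|$, namely $|u(X)|\le CR^{\alpha}|X-Q_0|^{2-n-\alpha}$ with $0<\alpha<1$ the Hölder exponent there; integrating this over the bounded set $\Omega\setminus T_{8R}(Q_0)$ in spherical shells about $Q_0$ gives $||u||_{L^1(\Omega\setminus T_{8R}(Q_0))}\le CR^{\alpha}\le C$, the integral $\int_{|X-Q_0|>8R}|X-Q_0|^{2-n-\alpha}\,dX$ being bounded independently of $R$ because $\alpha<1$. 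Adding the four contributions yields (\ref{RHSATO}) for smooth atoms, and Lemma~\ref{RpSmoothEnough} completes the proof.

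The genuinely substantive step is the far-region bound for $\tilN{u}$, i.e.\ Theorem~\ref{ZeroPart}, whose proof already packages the interplay of the $A_\infty$ property of $\omega$, the Carleson-type estimate of Lemma~\ref{ReversePartI}, and the decay (\ref{EssentialDecay}); everything else above is scaling bookkeeping. The one subtlety to watch is that $(R)_p$ supplies only $L^p$-control, so it must be invoked on a ball of radius comparable to the atom's support, where the powers of $R$ balance exactly — applied on all of $\partial\Omega$ it would be useless, since $||f||_{H^{1,p}(\partial\Omega)}\to\infty$ as $R\to0$.
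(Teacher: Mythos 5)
Your proposal is correct and follows essentially the same route as the paper: reduce to smooth $(1,\infty)$-atoms via Lemma~\ref{RpSmoothEnough}, note that $(R)_p$ forces $\omega\in A_\infty(d\sigma)$ so Theorem~\ref{ZeroPart} controls $\tilN{u}$ on $\partial\Omega\setminus\Delta_{8R}(Q_0)$, and close the boundary estimate on $\Delta_{8R}(Q_0)$ by H\"older and $(R)_p$, where the powers of $R$ cancel exactly as you compute. The only place you diverge is the solid estimate $\|u\|_{L^1(\Omega)}\le C$: the paper compresses this into $\|u\|_{L^1(\Omega)}\le C\|u\|_{L^1(\Omega_{R_0})}+\|\tilN{u}\|_{L^1(\partial\Omega)}$ (implicitly an FTC-along-rays argument, with (\ref{EssentialDecay}) giving boundedness on $\Omega_{R_0}$), whereas you split at scale $8R$ and treat $T_{8R}(Q_0)$ by H\"older plus the $\|u\|_{L^p(\Omega)}$ part of $(R)_p$ and the complement by integrating (\ref{EssentialDecay}) in shells; your version is a bit more explicit but is the same idea and equally valid.
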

\begin{proof}
By Lemma \ref{RpSmoothEnough} it is enough to show that
(\ref{RHSATO}) holds for smooth Hardy-Sobolev $(1,\infty)$-atoms.
Let $f$ be a smooth Hardy-Sobolev $(1,\infty)$-atom corresponding
to $\Delta_R(Q)$ and $u$ the weak solution for $f$. Without
loosing generality we can assume that $R\leq R_0$. Either by
slightly adjusting proof for Theorem \ref{RpToDp} (or by Theorem
5.4 \cite{KP93}) we know that $(D^*)_p$ holds and therefore the
elliptic measure $\omega$ of the operator $L^*$ belongs to
$A_\infty(d\sigma)$. From this by Theorem \ref{ZeroPart} we obtain
$$||\tilN{u}||_{L^1(\partial\Omega\backslash \Delta_{8R}(Q))}\leq C.$$
For the $\Delta_{8R}(Q)$ part, we use H\"older's inequality and
the $(R)_p$ condition to get
$$||\tilN{u}||_{L^1(\Delta_{8R}(Q))}\leq C |\Delta_R(Q)|^{\frac{1}{p'}}\: ||\tilN{u}||_{L^p(\Delta_{8R}(Q))}
\leq C
|\Delta_R(Q)|^{\frac{1}{p'}}\:||f||_{H^{1,p}(\partial\Omega)}\leq
C,$$
since $f$ is a $(1,\infty)$-atom for $\Delta_{R}(Q)$.\\
It remains to show that $||u||_{L^1(\Omega)}\leq C$. From
(\ref{EssentialDecay}) we see that for $X\in \Omega\backslash
T_{2R}(Q)$ we have
$$u(X)\leq C G(X,A_R(Q))$$
and so
$$||u||_{L^1(\Omega)}\leq C ||u||_{L^1(\Omega_{R_0})} + ||\tilN{u}||_{L^1(\partial \Omega)}\leq C,$$
which completes the proof.
\end{proof}

\subsection{$(R)_{\HS}$ implies $(R)_p$ for some $1<p<\infty$}

We are now ready to establish the main result of this paper,
namely the implication that $(R)_{\HS}$ implies $(R)_p$ for some
$1<p<\infty$. In the course of thinking about this problem we
discovered that there are two possible ways to establish this
result. One is to adapt the proof in \cite{KP93} where for $(R)_p$
implies $(R)_{p+\varepsilon}$ was established. The other way is
motivated by the proof of the main Theorem in \cite{She07}
(adjusted with the aid of Lemma \ref{MainHSEstimate}). We decided
we prefer the second method as it avoids the use of a localization
theorem and real variable techniques with rather lengthy proofs.
We present this method here.

We define
$$E(\lambda)=\{P\in \partial\Omega: M(\tilN{u})(P)>\lambda\}.$$
\begin{theorem}
Assume that $(R)_{\HS}$ holds. Choose any $p\in (1,\infty)$ for
which the $(D^*)_{p'}$ holds. Let $f\in C^{\infty}(\partial
\Omega)$ and $u$ be the corresponding weak solution of the
Dirichlet problem. Then there exist positive constants
$\varepsilon,\eta, C_0$ such that
\begin{align}\label{HSGoodLambda}
|E(A\lambda)|\leq \varepsilon^{1+\eta} |E(\lambda)| + |\{P\in
\partial\Omega: M(M(|\nabla f|))>\gamma\lambda\}|
\end{align}
for all $\lambda
>\lambda_0=C_0\int_{\partial\Omega}\tilN{u}\,d\sigma$,
$\gamma=\gamma(\varepsilon)$ and $A=\varepsilon^{-\frac{1}{p}}$.
\end{theorem}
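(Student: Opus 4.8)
The plan is to establish the good-$\lambda$ inequality \eqref{HSGoodLambda} by a Calder\'on--Zygmund stopping-time argument on the set $E(\lambda)$. First I would perform a Whitney-type decomposition of the open set $E(\lambda)\subset\partial\Omega$ into boundary balls $\{\Delta_i\}=\{\Delta(P_i,r_i)\}$ with bounded overlap, where $r_i\approx\dist(P_i,\partial\Omega\setminus E(\lambda))$, so that a comparable dilate $\Delta_i^*=\Delta(P_i,\kappa r_i)$ meets the complement of $E(\lambda)$. It then suffices to prove the localized estimate
\begin{align}\label{HSlocal}
|\{P\in\Delta_i:\ M(\tilN{u})(P)>A\lambda\}|\leq \varepsilon^{1+\eta}|\Delta_i| + |\{P\in\Delta_i:\ M(M(|\nabla f|))(P)>\gamma\lambda\}|
\end{align}
for each $i$, and sum over $i$ using finite overlap. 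Because $r_i\lesssim\dist(P_i,E(\lambda)^c)$, for $P$ outside a fixed large dilate of $\Delta_i$ the value $M(\tilN u)(P)$ is controlled by averages that already lie in the good set $\{M(\tilN u)\le\lambda\}$, so the part of the bad set inside $\Delta_i$ coming from "far" contributions is empty once $A$ is large; hence only the truncated/local non-tangential maximal function over $T_{C r_i}(P_i)$ matters.

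Next, on the Whitney ball $\Delta_i$ I would split the solution as $u=v_i+w_i$ where $v_i$ solves $Lv_i=0$ with boundary data $\varphi_i(f-C_{i})$ — here $\varphi_i$ is a smooth cutoff adapted to $\Delta_i^*$ and $C_i=\fint_{\Delta_i^*}f$ — and $w_i=u-v_i$ solves $Lw_i=0$ with boundary data that vanishes on $\Delta_i^*$ (after subtracting the harmless constant $C_i$, which does not change $\nabla u$). For $w_i$, which has vanishing data on a surface ball of radius $\approx r_i$, I invoke Theorem \ref{ZeroPart} together with Lemma \ref{ReversePartI} and the decay estimate \eqref{EssentialDecay} (available since $(R)_{\HS}$ gives $(D^*)_{\mathrm{BMO}}$, hence $\omega\in A_\infty$ by Theorem \ref{RpToDp}); this shows the non-tangential maximal function of $w_i$ over $\Delta_i$ is small, specifically $\fint_{\Delta_i}\tilN{w_i}\lesssim$ (a quantity controlled by $\lambda$ times the average of $M(|\nabla f|)$ over the dilated ball plus a term absorbable into $\varepsilon\lambda$), using that the data of $u$ — and therefore of $w_i$ — on the portion of $\partial\Omega$ near $\Delta_i^*$ is comparable to $\lambda$ at the Whitney scale because $\Delta_i^*$ touches the good set. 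For $v_i$, whose boundary data is $\varphi_i(f-C_i)$, I use the $(R)_p$ estimate — which is available precisely because we chose $p$ with $(D^*)_{p'}$ solvable, and Shen's equivalence (or Theorem \ref{RpToRHS}-type reasoning plus the hypothesis $(R)_{\HS}$) upgrades $(D^*)_{p'}$ to $(R)_p$ — to bound $\|\tilN{v_i}\|_{L^p(\Delta_i)}\lesssim\|\varphi_i(f-C_i)\|_{H^{1,p}}$; then Lemma \ref{MainHSEstimate} converts the right-hand side into an expression involving $M[M(|\nabla f|)^q]$ and $M(|\nabla f|)^q$ at a point of $\Delta_i^*$, which by Chebyshev controls the measure of $\{M(\tilN{v_i})>A\lambda/2\}\cap\Delta_i$ by $\varepsilon^{p}|\Delta_i|$ (from the $A=\varepsilon^{-1/p}$ scaling) plus the desired $M(M(|\nabla f|))>\gamma\lambda$ term.

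Combining, $\{M(\tilN u)>A\lambda\}\cap\Delta_i\subset\{M(\tilN{v_i})>A\lambda/2\}\cap\Delta_i\,\cup\,\{M(\tilN{w_i})>A\lambda/2\}\cap\Delta_i$, and the two pieces are bounded as above; choosing $\gamma=\gamma(\varepsilon)$ small forces the $w_i$ contribution either to be empty (when the Whitney-scale data is genuinely small) or to be captured by the $M(M(|\nabla f|))>\gamma\lambda$ set, yielding \eqref{HSlocal} with $\varepsilon^{1+\eta}$ in place of $\varepsilon^{p}$ after choosing $\eta<p-1$ and using $\varepsilon$ small. Summing over $i$ with bounded overlap and the Whitney property $\sum_i|\Delta_i|\lesssim|E(\lambda)|$ gives \eqref{HSGoodLambda}. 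The restriction $\lambda>\lambda_0=C_0\int_{\partial\Omega}\tilN u\,d\sigma$ guarantees $E(\lambda)$ is a proper subset with a nonempty complement so that the Whitney decomposition and the "touching the good set" property are available.

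The main obstacle is the term $v_i$: getting the scaling $A=\varepsilon^{-1/p}$ to interact correctly with Lemma \ref{MainHSEstimate} so that the right side of the $(R)_p$ estimate is genuinely controlled by the maximal functions of $\nabla f$ at the \emph{good} Whitney scale — i.e., ensuring the constant $C_i$ and cutoff $\varphi_i$ are chosen so that $\|\varphi_i(f-C_i)\|_{H^{1,p}(\partial\Omega)}$ really does reduce to $r_i^{(n-1)/p}$ times a maximal average of $|\nabla f|$, with no leftover $L^p$-of-$f$ term that cannot be absorbed. This is exactly the role of Lemma \ref{MainHSEstimate} (the Poincar\'e-type bound on $\|\varphi(f-C_R)\|_{\mathcal C^q}$), and the bookkeeping of powers of $R=r_i$ and $\varepsilon$ there is the delicate quantitative heart of the proof; the $w_i$ half, by contrast, is essentially a repackaging of Shen's argument via Theorems \ref{ZeroPart} and the $A_\infty$ machinery already set up.
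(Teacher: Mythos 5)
There is a genuine gap, and it is a circularity. Your treatment of the piece $v_i$ (the solution with localized data $\varphi_i(f-C_i)$) invokes the $(R)_p$ estimate, which you justify by saying that ``Shen's equivalence (or Theorem \ref{RpToRHS}-type reasoning plus the hypothesis $(R)_{\HS}$) upgrades $(D^*)_{p'}$ to $(R)_p$.'' But establishing that $(D^*)_{p'}$ together with $(R)_{\HS}$ implies $(R)_p$ is precisely what this theorem (followed by the iteration in Theorem \ref{RHSToRp}) is designed to prove; you cannot assume it. Shen's theorem requires $(R)_q$ for some $q\in(1,\infty)$ as a hypothesis, not $(R)_{\HS}$, and Theorem \ref{RpToRHS} runs in the opposite direction ($(R)_p\Rightarrow(R)_{\HS}$). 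Indeed, the paper notes in Corollary \ref{EndpointIncluded} that it is open whether $(D^*)_{p'}$ alone implies $(R)_p$. The correct move, which the paper makes, is to control the $v$ piece using the assumed $(R)_{\HS}$ estimate together with Lemma \ref{MainHSEstimate}: one bounds $\|\varphi(f-\alpha)\|_{\HS}$ (not $\|\cdot\|_{H^{1,p}}$) pointwise by $|Q_k|\,M(M(|\nabla f|))(Q)$ at a good point $Q$, then feeds this into the weak $(1,1)$ inequality; Lemma \ref{MainHSEstimate} is stated for the $\mathcal C^q$ norm, which matches the $\HS$ norm when $q=1$, not $H^{1,p}$.

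You also have the mechanism for $w_i=u-v_i$ wrong. You propose to control it via Theorem \ref{ZeroPart}, Lemma \ref{ReversePartI}, and the decay estimate \eqref{EssentialDecay}, yielding only an $L^1$-average bound for $\tilN{w_i}$. But Theorem \ref{ZeroPart} gives a bound \emph{away} from the support of an atom, not on the Whitney ball itself, and an $L^1$ bound alone cannot produce the needed self-improvement to $\varepsilon^{1+\eta}$ with $\eta>0$. The paper instead applies the reverse H\"older inequality for $\tilN{w}$ (Lemma \ref{ReversePartII}, Shen's Theorem 2.9), valid because $w_i-\alpha$ has vanishing data on $6Q_k$ and $(D^*)_{p'}$ holds. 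Crucially, one picks $\bar p>p$ with $(D^*)_{\bar p'}$ still solvable (possible by openness of the $A_\infty$ condition) and uses the weak $(\bar p,\bar p)$ inequality; then $A^{-\bar p}=\varepsilon^{\bar p/p}$ with $\bar p/p>1$ is what supplies the exponent $1+\eta$ with $\eta=\tfrac12(\bar p/p -1)$. Without this $\bar p$-power gain from the reverse H\"older estimate, the good-$\lambda$ inequality would not close. Your Whitney decomposition, the localization $M(\tilN u)\le\max\{M_{5Q_k}(\tilN u),C_1\lambda\}$, and the basic split $u=v_i+w_i$ with the cutoff and the subtracted average are all in line with the paper; the essential ideas you are missing are (i) using $(R)_{\HS}$ rather than $(R)_p$ on the localized data, and (ii) the reverse H\"older improvement with $\bar p>p$ on the vanishing-data piece.
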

\begin{proof}
This proof for the $(R)_p$ case can be found in Lemma 3.4 in
\cite{She07}. The weak $(1,1)$ inequality for the Hardy Littlewood
maximal function implies
$$E(\lambda)\leq \frac{C}{\lambda}\int_{\partial\Omega}\tilN{u}\,d\sigma\leq \frac{C}{\lambda}\frac{\lambda_0}{C_0}.$$
Thus by choosing $C_0=C_0(\Omega)$ sufficiently large we can
ensure that $E(\lambda)\leq \frac{1}{2}|\Delta_{{R_0}/{4}}|$,
where $\Delta_{{R_0}/{4}}$ is any surface ball with radius
$R_0/4$. Thus $E(\lambda)^c\cap \Delta_{{R_0}/{4}}\neq \emptyset$
for $\lambda >\lambda_0$.\vglue1mm

Let $\{Q_k\}$ be a Whitney decomposition of $E(\lambda)$, i.e.
\begin{itemize}
\item $E(\lambda)=\bigcup_kQ_k$ \item $\sum_k\chi_{Q_k}\leq K$
\item $3Q_k\cap E(\lambda)^c\neq \emptyset$.
\end{itemize}\vglue1mm

To prove the lemma it is sufficient to prove that
\begin{equation}
Q_k\cap \{M(M(|\nabla f|))\leq
\gamma\lambda\} \neq \emptyset\text{ implies } |E(A\lambda)\cap
Q_k|\leq \varepsilon^{1+\eta}|Q_k|.\label{GL}
\end{equation}
Indeed, since $E(A\lambda)\subset E(\lambda)$ it follows that for
$\varepsilon$ small enough such that $K\varepsilon^{1+\eta}\leq
\varepsilon^{1+\frac{\eta}{2}}$ we have
\begin{align*}
|E(A\lambda)| & \leq \sum_{\{k:Q_k\cap \{M_{{R_0}/{2}}(M_{R_0}(|\nabla f|))\leq \gamma\lambda\} \neq \emptyset\}} |E(A\lambda)\cap Q_k| + |\{M(M(|\nabla f|))\geq \gamma\lambda\}|\\
&\leq \varepsilon^{1+\frac{\eta}{2}}|E(\lambda)| + |\{M(M(|\nabla
f|))\geq \gamma\lambda\}|,
\end{align*}
which is the statement of our theorem.\vglue1mm

Hence we focus on establishing (\ref{GL}). By the properties
imposed from the Whitney decomposition on $Q_k$ we have for $P\in
Q_k$:
$$M(\tilN{u})(P)\leq \max \{M_{5Q_k}(\tilN{u}),C_1\lambda\}$$
for some $C_1=C_1(\Omega)$ depending only on the geometry of our
domain. Here $M_Q$ is a modified version of the maximal function
$$M_Q(f)(P)=\sup_{\stackrel{\widetilde{Q}\ni P}{\widetilde{Q}\subset Q}}
\fint_{\widetilde{Q}} |f|.$$

Take now $A$ larger than $C_1$ we see by the properties of the
Whitney decomposition on $Q_k$ that
\begin{align}\label{ALargeEnough}|Q_k\cap E(A\lambda)|\leq |\{ P\in Q_k: M_{5Q_k}(\tilN{u})(P)>A\lambda\}|.
\end{align}
Let $v$ be a weak solution to the Dirichlet problem for the
operator $L$ in the domain $\Omega$ with boundary data
$\varphi(f-\alpha)$, where $\varphi\in C^{\infty}(\partial\Omega)$
with $0\leq \varphi\leq 1$, $\varphi\equiv 1$ on $6Q_k$, $\supp
\varphi\subset 10Q_k$ and $\alpha =\fint_{10Q_k}f$. Then
\begin{align*}
|Q_k\cap E(A\lambda)| &\leq |\{P\in Q_k: M_{5Q_k}[\tilN{(u-v)}]>\textstyle\frac{A\lambda}{2}\}|\\
&\quad + |\{P\in Q_k: M_{5Q_k}[\tilN{v}]>\textstyle\frac{A\lambda}{2}\}|\\
&\leq \frac{C}{(A\lambda)^{\bar{p}}}\int_{5Q_k}
\tilN{(u-v)}^{\bar{p}}\,d\sigma + \frac{C}{A\lambda} \int_{5Q_k}
\tilN{v}\,d\sigma = I+ II
\end{align*}
by the weak $(\bar{p},\bar{p})$ and the weak $(1,1)$ inequality.
We choose $\bar{p}>p$ so that $(D^*)_{\bar{p}'}$ still holds.
Since $(R)_{\HS}$ holds, Lemma \ref{MainHSEstimate} for $q=1$
implies for the second term
$$II\leq \frac{C}{A\lambda}||\varphi(f-\alpha)||_{\HS}\leq \frac{C}{A\lambda} |Q_k| M(M(|\nabla f|))(Q)$$
for any $Q\in 5Q_k$. Thus we can choose a $Q$ from $Q_k\cap \{M(M(|\nabla f|)\leq \gamma \lambda\}$ to get $II\leq \frac{C\gamma}{A}|Q_k|.$\\
For $I$ observe that $u-v-\alpha$ is a weak solution with
vanishing boundary data on $6Q_k$. For this term we use the Main
Lemma of \cite{She07}, namely the reverse H\"older inequality for
$\tilN{u}$.
\begin{lemma}\label{ReversePartII}[Theorem 2.9 in \cite{She07}]
Assume that $(D^*)_{p'}$ holds. Let $w$ be a weak solution which
vanishes on $\Delta_{4R}(Q)$. Then
$$\left(\fint_{\Delta_R(Q_0)}\tilN{w}^p\,d\sigma\right)^{\frac{1}{p}}\leq \fint_{\Delta_{4R}(Q_0)} N(\nabla w)\,d\sigma.$$
\end{lemma}
Hence it follows that
\begin{align*}
I & \leq \frac{C}{(A\lambda)^{\bar{p}}}|Q_k|\left(\fint_{6Q_k}\tilN{(u-v)}\,d\sigma\right)^{\bar{p}}\\
&\leq \frac{C}{(A\lambda)^{\bar{p}}}|Q_k| \left[\left(\fint_{6Q_k}\tilN{u}\,d\sigma\right)^{\bar{p}} + \left(\fint_{6Q_k} \tilN{v}\,d\sigma\right)^{\bar{p}}\right]\\
&\leq \frac{C}{(A\lambda)^{\bar{p}}}\left[\lambda^{\bar{p}} +
(\gamma\lambda)^{\bar{p}}\right]|Q_k|\leq
\frac{C}{A^{\bar{p}}}|Q_k|.
\end{align*}
To get the last line we have used the facts that $3Q_k\cap
E(\lambda)^c\ne 0$ as well as $Q_k\cap
\{M(M(|\nabla f|))\leq \gamma\lambda\} \neq
\emptyset$ and that $(R)_{\HS}$ holds. In the last step we hid
$\gamma$ into a generic constant $C$, we can do this since
$\gamma>0$ will be chosen small in the next step. Collecting all
estimates together we see that
\begin{align*}
|Q_k\cap E(A\lambda)| &\leq |Q_k|\left(\frac{C\gamma}{A} + \frac{C}{A^{\bar{p}}}\right)\\
&= |Q_k| (C\gamma \varepsilon^{\frac{1}{p}} + C\varepsilon^{\bar{p}/p})\\
&= |Q_k| \varepsilon^{1+\eta} (C\gamma \varepsilon^{\frac{1}{p} -
1-\eta} + C\varepsilon^{\eta}),
\end{align*}
for $\eta=\frac{1}{2}(\bar{p}/p -1)>0$. We now choose
$\varepsilon>0$ small enough to make the second term less than
$\frac{1}{2}$ and then choose $\gamma$ such that the first term is
smaller than $\frac{1}{2}$. Therefore
$$|Q_k\cap E(A\lambda)|\leq \varepsilon^{1+\eta}|Q_k|,$$
which finishes the proof.
\end{proof}
With (\ref{HSGoodLambda}) established the proof of the Main
Theorem in \cite{She07} implies the our main result. For
completeness we include the proof.
\begin{theorem}\label{RHSToRp} There exists $1<p<\infty$ such that $(R)_{\HS}$ implies $(R)_p$.
\end{theorem}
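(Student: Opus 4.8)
The plan is to integrate the good-$\lambda$ inequality (\ref{HSGoodLambda}) against $p\lambda^{p-1}\,d\lambda$, following the proof of the main theorem of \cite{She07}. Fix once and for all a $p\in(1,\infty)$ for which $(D^*)_{p'}$ holds; such a $p$ exists because $(R)_{\HS}$ implies $(D^*)_{\text{BMO}}$ by Theorem \ref{RpToDp}, hence $\omega\in A_\infty(d\sigma)=\bigcup_{q>1}B_q(d\sigma)$, so that $\omega\in B_{p'}(d\sigma)$ for some $p'$. It suffices to prove the bound $\|\tilN{u}\|_{L^p(\dom)}+\|u\|_{L^p(\Omega)}\le C\|f\|_{H^{1,p}(\dom)}$ for $f\in C^\infty(\dom)$; the general case $f\in H^{1,p}(\dom)\cap C^0(\dom)$ then follows by approximating $f$ in $H^{1,p}(\dom)$ by smooth functions (which also converge uniformly, since $f$ is continuous) and passing to the limit, exactly as in the existence and uniqueness theorem established above. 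Note also that any such smooth $f$ lies in $\HS\cap C^0(\dom)$, so $(R)_{\HS}$ does apply to it.

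Write $F(\lambda)=|E(\lambda)|$ with $E(\lambda)=\{P\in\dom:M(\tilN{u})(P)>\lambda\}$, and set $\lambda_0=C_0\int_{\dom}\tilN{u}\,d\sigma$. Since $F(\lambda)\le|\dom|$, the truncated integral $\int_0^T p\lambda^{p-1}F(\lambda)\,d\lambda$ is finite for every $T<\infty$, which is what legitimizes the absorption below. For $\lambda>\lambda_0$ we multiply (\ref{HSGoodLambda}) by $p\lambda^{p-1}$ and integrate over $(\lambda_0,T)$; substituting $\mu=A\lambda$ on the left (with $A=\varepsilon^{-1/p}$, so $A^{-p}=\varepsilon$) and $\nu=\gamma\lambda$ in the term involving $f$, and using the $L^p$-boundedness of the Hardy--Littlewood maximal operator twice to bound $\|M(M(|\nabla f|))\|_{L^p(\dom)}\le C_p\|\nabla f\|_{L^p(\dom)}$, we arrive at
$$\varepsilon\int_{A\lambda_0}^{AT}p\mu^{p-1}F(\mu)\,d\mu\le\varepsilon^{1+\eta}\int_{\lambda_0}^{T}p\lambda^{p-1}F(\lambda)\,d\lambda+C\gamma^{-p}\|f\|_{H^{1,p}(\dom)}^p.$$
Since $A>1$ we have $\int_{\lambda_0}^T p\lambda^{p-1}F\le(A\lambda_0)^p|\dom|+\int_{A\lambda_0}^{AT}p\mu^{p-1}F\,d\mu$; inserting this and choosing $\varepsilon$ small enough that $\varepsilon^{\eta}\le\frac12$, we may absorb the term $\varepsilon^{1+\eta}\int_{A\lambda_0}^{AT}p\mu^{p-1}F\,d\mu$ (finite, since it is $\le|\dom|T^p$) into the left-hand side, obtaining $\int_{A\lambda_0}^{AT}p\mu^{p-1}F(\mu)\,d\mu\le C\big(\lambda_0^p|\dom|+\|f\|_{H^{1,p}(\dom)}^p\big)$ uniformly in $T$. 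Letting $T\to\infty$ and adding $\int_0^{A\lambda_0}p\lambda^{p-1}F\le(A\lambda_0)^p|\dom|$ gives $\|M(\tilN{u})\|_{L^p(\dom)}^p\le C(\lambda_0^p|\dom|+\|f\|_{H^{1,p}(\dom)}^p)$, hence, since $\tilN{u}\le M(\tilN{u})$ a.e.,
$$\|\tilN{u}\|_{L^p(\dom)}\le C\big(\lambda_0+\|f\|_{H^{1,p}(\dom)}\big).$$

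To convert this into the statement of $(R)_p$ we must control $\lambda_0$ and $\|u\|_{L^p(\Omega)}$, and here we use that $\dom$ is bounded. A suitable fixed multiple of $f$ is a single Hardy--Sobolev $(1,p)$-atom corresponding to the surface ball $\dom$ itself, so $\|f\|_{\HS}\le C\|f\|_{H^{1,p}(\dom)}$ with $C$ depending only on $|\dom|$; therefore $(R)_{\HS}$ yields $\lambda_0=C_0\|\tilN{u}\|_{L^1(\dom)}\le C\|f\|_{\HS}\le C\|f\|_{H^{1,p}(\dom)}$, and likewise $\|u\|_{L^1(\Omega)}\le C\|f\|_{H^{1,p}(\dom)}$. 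Combined with the previous display this gives $\|\tilN{u}\|_{L^p(\dom)}\le C\|f\|_{H^{1,p}(\dom)}$. For the interior term, the pointwise estimate $u^*\le C\,\tilN{u}+C\|u\|_{L^1(\Omega)}$ (already used in the existence theorem above, applied to the solution $u$) together with the elementary inequality $\|u\|_{L^p(\Omega)}\le C\|u^*\|_{L^p(\dom)}$, valid because $\Omega$ is bounded, yields $\|u\|_{L^p(\Omega)}\le C\big(\|\tilN{u}\|_{L^p(\dom)}+\|u\|_{L^1(\Omega)}\big)\le C\|f\|_{H^{1,p}(\dom)}$. This establishes $(R)_p$ for smooth data, and hence in general.

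The only delicate point I anticipate is the bookkeeping in integrating (\ref{HSGoodLambda}): one must carry the truncated integrals $\int_0^T$ throughout so that every quantity is a priori finite before the absorption, and keep precise track of the factors $A^{-p}=\varepsilon$ and $\gamma^{-p}$ produced by the two changes of variable, so that after absorption the right-hand side involves genuinely only $\lambda_0$ and $\|f\|_{H^{1,p}(\dom)}$. That is then harmless, since the embedding $H^{1,p}(\dom)\hookrightarrow\HS$ (available because $\dom$ has finite measure) combined with $(R)_{\HS}$ reabsorbs the $\lambda_0$ term and the $\|u\|_{L^1(\Omega)}$ term into $C\|f\|_{H^{1,p}(\dom)}$.
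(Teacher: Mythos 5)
Your proposal is correct and follows essentially the same route as the paper: integrate the good-$\lambda$ inequality (\ref{HSGoodLambda}) against $\lambda^{p-1}\,d\lambda$ over a truncated range, change variables to exploit $A^{-p}=\varepsilon$ for the absorption, control the remaining $\lambda_0$-tail via the finite-measure embedding $H^{1,p}(\dom)\hookrightarrow\HS$ together with $(R)_{\HS}$, and finally bound $\|u\|_{L^p(\Omega)}$. The only cosmetic difference is in the last step: the paper splits $\|u\|_{L^p(\Omega)}$ into a boundary collar controlled by $\|\tilN{u}\|_{L^p(\dom)}$ and an interior piece controlled by $\|u\|_{L^1(\Omega_{R_0})}$, whereas you use the pointwise bound $u^*\le C\,\tilN{u}+C\|u\|_{L^1(\Omega)}$ and Fubini; both arguments are equally valid.
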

\begin{proof}
By Theorem \ref{RpToDp} there exists $1<p<\infty$ such that
$(D^*)_{p'}$ holds. We multiply (\ref{HSGoodLambda}) both sides
with $\lambda^{p-1}$ and integrate then over $(\lambda_0,\Lambda)$
to get
$$\int_{\lambda_0}^{\Lambda} |E(A\lambda)| \lambda^{p-1}\diff\lambda \leq \varepsilon^{1+\eta}\int_{\lambda_0}^{\Lambda} |E(\lambda)|\lambda^{p-1}\diff\lambda + C\int|\nabla f|^p\,d\sigma,$$
where for the last term we used the boundedness of the
Hardy-Littlewood maximal function on $L^p$ twice. Using the change
of variables $A\lambda\mapsto\lambda$ we get
$$\int_{A\lambda_0}^{A\Lambda} |E(\lambda)| \lambda^{p-1}A^{1-p}A^{-1}\diff\lambda \leq \varepsilon^{1+\eta}\int_{\lambda_0}^{\Lambda} |E(\lambda)|\lambda^{p-1}\diff\lambda + C\int|\nabla f|^p\,d\sigma.$$
By the definition of $A$ we have $A^{1-p}A^{-1}=\varepsilon$.
Therefore the previous inequality simplifies to
$$\int_{A\lambda_0}^{A\Lambda} |E(\lambda)| \lambda^{p-1}\diff\lambda \leq  \varepsilon^{\eta}\int_{\lambda_0}^{\Lambda} |E(\lambda)|\lambda^{p-1}\diff\lambda + C(\varepsilon)\int|\nabla f|^p\,d\sigma.$$
For $\varepsilon$ small enough such that $\varepsilon^{\eta}\leq
\frac{1}{2}$ and $\Lambda$ large enough such that $\Lambda \geq
A\lambda_0$, we can hide the part
$\varepsilon^{\eta}\int_{A\lambda_0}^{\Lambda}
|E(\lambda)|\lambda^{p-1}\diff\lambda$ on the left hand side to
get
$$\int_{A\lambda_0}^{A\Lambda} |E(\lambda)| \lambda^{p-1}\diff\lambda \leq  C\int_{\lambda_0}^{A\lambda_0} |E(\lambda)|\lambda^{p-1}\diff\lambda + C\int|\nabla f|^p\,d\sigma.$$
By adding $\int_0^{A\lambda_0}|E(\lambda)|
\lambda^{p-1}\diff\lambda$ on both sides we end up with
\begin{align}\label{LambdaRuns}
\int_{0}^{A\Lambda} |E(\lambda)| \lambda^{p-1}\diff\lambda \leq
C\int_{0}^{A\lambda_0} |E(\lambda)|\lambda^{p-1}\diff\lambda +
C\int|\nabla f|^p\,d\sigma.
\end{align}
By the definition of $\lambda_0$, the $(R)_{\HS}$-condition and
H\"older's inequality the first term of the right hand side is
bounded by
\begin{align*}
C\left(\int_{\partial\Omega}\tilN{u}\,d\sigma\right)^p \leq C
||f||_{\HS}^p\leq C ||f||_{H^{1,p}(\partial\Omega)}^p.
\end{align*}
Thus sending $\Lambda\rightarrow \infty$ in (\ref{LambdaRuns})
gives $\int_{\partial\Omega}(M(\tilN{u}))^p\leq
C||f||_{H^{1,p}(\partial\Omega)}^p$, i.e.
\begin{align}\label{YesYes}
||\tilN{u}||_{L^p(\partial\Omega)}\leq
C||f||_{H^{1,p}(\partial\Omega)}.
\end{align}
It remains to check that $||u||_{L^p(\Omega)}\leq
C||f||_{H^{1,p}(\partial\Omega)}$. By the usual splitting into the
positive end negative part, we can without loosing generality
assume that $f$ is non-negative. We have
$$||u||_{L^p(\Omega)} \leq C ||\tilN{u}||_{L^p(\partial\Omega)} + C ||u||_{L^1(\Omega_{R_0})}\leq C ||f||_{H^{1,p}(\partial\Omega)} + ||f||_{\HS}\leq C||f||_{H^{1,p}(\partial\Omega)}.$$
\end{proof}
The $p$ in Theorem \ref{RHSToRp} was determined by the $p'$ for
which $(D^*)_{p'}$ holds. Thus Theorem \ref{RHSToRp} allows to
conclude the following:
\begin{corollary}\label{EndpointIncluded}
Let $L$ be an elliptic operator with the elliptic measure of the
adjoint $L^*$ operator in $A_{\infty}(d\sigma)$. Then either
\begin{align*}
&\begin{cases}
& (i)_a \text{ } (D^*)_{p'}\text{ implies } (R)_p\text{ for all $p\in (1,\infty) $ for which $(D^*)_{p'}$ holds}\\
& (i)_b \text{ } (D^*)_{\text{BMO}}\text{ implies } (R)_{\HS}\\
\end{cases}\\
or&\\
&\begin{cases}
&(ii)_a \text{ } (R)_p \text{ is not solvable for any $p\in(1,\infty)$}\\
&(ii)_b \text{ } (R)_{\HS}\text{ is not solvable.}
\end{cases}
\end{align*}
\end{corollary}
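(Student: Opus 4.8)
The plan is to argue by a dichotomy according to whether the endpoint problem $(R)_{\HS}$ is solvable. Once that split is made, both alternatives follow by bookkeeping with Theorems \ref{RpToDp}, \ref{RpToRHS} and \ref{RHSToRp}, so there is essentially no new analysis to do and hence no genuine obstacle; the only point that needs a line of care is flagged at the end.

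I would first treat the case in which $(R)_{\HS}$ is solvable and show that alternative $(i)$ holds. Here $(i)_b$ is immediate: its conclusion $(R)_{\HS}$ is assumed true (and in any event its premise, the standing hypothesis $\omega\in A_\infty(d\sigma)$, is precisely $(D^*)_{\text{BMO}}$, so one has a true-premise, true-conclusion implication). For $(i)_a$ I would fix an arbitrary $p\in(1,\infty)$ for which $(D^*)_{p'}$ is solvable and observe that the proof of Theorem \ref{RHSToRp} is not tied to a distinguished exponent: the good-$\lambda$ inequality (\ref{HSGoodLambda}) is available, as its hypothesis explicitly states, for \emph{every} $p$ with $(D^*)_{p'}$ solvable, and from it one derives (\ref{YesYes}) together with the companion bound on $||u||_{L^p(\Omega)}$ verbatim. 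Thus $(R)_{\HS}$ together with $(D^*)_{p'}$ yields $(R)_p$; that is, $(D^*)_{p'}\Rightarrow(R)_p$ for every such $p$, which is exactly $(i)_a$. (Equivalently, one may simply quote the first part of our main theorem, which asserts that under $(R)_{\HS}$ the problems $(R)_p$ and $(D^*)_{p'}$ are equivalent.)

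In the complementary case, where $(R)_{\HS}$ is not solvable, I would verify alternative $(ii)$. Then $(ii)_b$ holds by the case assumption, and for $(ii)_a$ I would argue by contradiction: if $(R)_p$ were solvable for some $p\in(1,\infty)$, then Theorem \ref{RpToRHS} ($(R)_p$ implies $(R)_{\HS}$) would force $(R)_{\HS}$ to be solvable, contradicting the case hypothesis; hence $(R)_p$ fails for every $p\in(1,\infty)$. Since each $L$ with $\omega\in A_\infty(d\sigma)$ lies in exactly one of the two cases, the dichotomy is established. The only step requiring justification beyond pure bookkeeping — and it is not a real difficulty — is the observation used in $(i)_a$ that the argument of Theorem \ref{RHSToRp} runs for an arbitrary admissible exponent $p$, not merely the particular one produced by Theorem \ref{RpToDp}; this is guaranteed by the phrasing of the hypothesis of (\ref{HSGoodLambda}).
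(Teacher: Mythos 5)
Your argument is correct and follows the same route the paper intends. The paper presents the corollary without a detailed proof, merely noting that "the $p$ in Theorem~\ref{RHSToRp} was determined by the $p'$ for which $(D^*)_{p'}$ holds," which is exactly the observation you flag as the one step requiring care: the good-$\lambda$ inequality~(\ref{HSGoodLambda}) and hence the proof of Theorem~\ref{RHSToRp} run for \emph{any} $p\in(1,\infty)$ with $(D^*)_{p'}$ solvable, not just the particular one supplied by Theorem~\ref{RpToDp}. Your dichotomy on whether $(R)_{\HS}$ is solvable, combined with Theorem~\ref{RpToRHS} for the contrapositive in case~$(ii)_a$, is precisely the bookkeeping the paper has in mind.
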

It remains an open question whether the second alternative in
Corollary \ref{EndpointIncluded} does happen or whether
$(D^*)_{p'}$ always implies $(R)_p$. By Corollary
\ref{EndpointIncluded}, Theorem \ref{ZeroPart}, part of the proof
of Theorem \ref{RpToRHS} regarding the $||u||_{L^1(\Omega)}$ norm
and Lemma \ref{RpSmoothEnough}, we get the following:
\begin{corollary}\label{RpDpSimplified}
Assume that for all smooth Hardy-Sobolev $(1,\infty)$-atoms $f$
the weak solution $u$ of the equation $Lu=0$ with Dirichlet data
$f$ satisfies
\begin{align*}
\int_{8\Delta_R(Q)}\tilN{u}\leq C,
\end{align*}
where $\Delta_R(Q)$ is a surface ball on which the atom $f$ is
supported and $C$ is a constant independent of $f$. Then
$$(D^*)_{p'}\quad\text{ implies }\quad (R)_p.$$
\end{corollary}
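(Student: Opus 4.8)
The plan is to promote the localized hypothesis to a global bound on $\tilN{u}$ over all of $\partial\Omega$, then add interior $L^1$ control to obtain the solvability of $(R)_{\HS}$, and finally invoke the dichotomy of Corollary \ref{EndpointIncluded} to extract $(R)_p$.

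First, recall that $(D^*)_{p'}$ is equivalent to the elliptic measure $\omega$ of $L^*$ lying in $B_p(d\sigma)\subset A_\infty(d\sigma)$; in particular $\omega\in A_\infty(d\sigma)$, so Theorem \ref{ZeroPart} applies. Thus for every smooth Hardy-Sobolev $(1,\infty)$-atom $f$ supported on a surface ball $\Delta_R(Q)$, the weak solution $u$ for $f$ satisfies $\|\tilN{u}\|_{L^1(\partial\Omega\setminus\Delta_{8R}(Q))}\le C$, with $C$ independent of $f$ and $R$. The hypothesis of the corollary supplies precisely the complementary estimate $\int_{8\Delta_R(Q)}\tilN{u}\le C$, and adding the two yields $\|\tilN{u}\|_{L^1(\partial\Omega)}\le C$, uniformly in $f$ and $R$.

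Next I would reuse the portion of the proof of Theorem \ref{RpToRHS} dealing with $\|u\|_{L^1(\Omega)}$. Assuming $R\le R_0$, the decay estimate (\ref{EssentialDecay}) gives $u(X)\le CG(X,A_R(Q))$ for $X\in\Omega\setminus T_{2R}(Q)$; since $n\ge 3$, integrating $G(X,A_R(Q))\lesssim |X-A_R(Q)|^{2-n}$ over the bounded set $\Omega_{R_0}$ produces a bound independent of $f$ and $R$, while $\|u\|_{L^1((\partial\Omega)_{R_0})}$ is dominated by $\|\tilN{u}\|_{L^1(\partial\Omega)}$, already controlled. Hence (\ref{RHSATO}) holds for every smooth Hardy-Sobolev $(1,\infty)$-atom (note $\|f\|_{\HS}\lesssim 1$ for such an atom), and by Lemma \ref{RpSmoothEnough} --- working with $(1,\infty)$-atoms, since the $\HS$-norm is the $\HSt{\infty}$-norm, exactly as in the proof of Theorem \ref{RpToRHS} --- we conclude that $(R)_{\HS}$ is solvable.

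Finally, $\omega\in A_\infty(d\sigma)$ and $(R)_{\HS}$ solvable together exclude the second alternative of Corollary \ref{EndpointIncluded} (which demands that $(R)_{\HS}$ fail), so the first alternative must hold; its statement $(i)_a$ asserts that $(D^*)_{p'}$ implies $(R)_p$ for every $p$ for which $(D^*)_{p'}$ holds, and by assumption our $p$ is such. I do not expect a real obstacle here, since the argument is an assembly of already-established results; the only point that needs genuine care is the uniformity of all the constants in both the atom $f$ and the radius $R$, so that the estimates inside and outside $\Delta_{8R}(Q)$ combine into a single $R$-independent bound and so that the $L^1(\Omega_{R_0})$ estimate does not degenerate as $R\to 0$.
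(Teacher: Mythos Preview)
Your proposal is correct and follows essentially the same route as the paper: combine Theorem~\ref{ZeroPart} (using $\omega\in A_\infty(d\sigma)$ from $(D^*)_{p'}$) with the hypothesis to get the full $L^1(\partial\Omega)$ bound on $\tilN{u}$, borrow the $\|u\|_{L^1(\Omega)}$ estimate from the proof of Theorem~\ref{RpToRHS}, apply Lemma~\ref{RpSmoothEnough} to obtain $(R)_{\HS}$, and then invoke Corollary~\ref{EndpointIncluded}. The paper's own justification is exactly this list of ingredients.
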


\end{document}